\documentclass{article}

\usepackage[utf8]{inputenc} % allow utf-8 input
\usepackage[T1]{fontenc}    % use 8-bit T1 fonts
\usepackage{lmodern}
\usepackage{booktabs}       % professional-quality tables
\usepackage{amsfonts}       % blackboard math symbols
\usepackage{nicefrac}       % compact symbols for 1/2, etc.
\usepackage{microtype}      % microtypography
\usepackage{graphicx}

\usepackage[round]{natbib}
\usepackage{caption}
\usepackage{libertine}
\usepackage{algorithm}
\usepackage{algorithmic}

\usepackage{array}
\usepackage{multirow}
\newcolumntype{M}[1]{>{\centering\arraybackslash}m{#1}}

\usepackage{tabularray}

\usepackage{makecell}
\usepackage{hhline}
\usepackage{array}
\usepackage{newfloat}
\usepackage{listings}
\DeclareCaptionStyle{ruled}{labelfont=normalfont,labelsep=colon,strut=off} % DO NOT CHANGE THIS
\floatstyle{ruled}
\newfloat{listing}{tb}{lst}{}
\floatname{listing}{Listing}
\usepackage{amsmath,amssymb,amsthm,mathtools}

\usepackage{xr-hyper}
\usepackage[capitalize,noabbrev]{cleveref}
\usepackage{enumitem}

\theoremstyle{plain}
\newtheorem{theorem}{Theorem}
\newtheorem{proposition}[theorem]{Proposition}
\newtheorem{lemma}[theorem]{Lemma}

\newtheorem{corollary}[theorem]{Corollary}

\theoremstyle{definition}
\newtheorem{definition}[theorem]{Definition}
\newtheorem{algorithm_def}[theorem]{Algorithm}

\theoremstyle{remark}
\newtheorem{remark}[theorem]{Remark}
\newtheorem{example}[theorem]{Example}

\newcommand{\R}{\mathbb{R}}
\newcommand{\N}{\mathbb{N}}
\newcommand{\ip}[2]{\langle #1,\, #2 \rangle}
\newcommand{\norm}[1]{\| #1 \|}
\newcommand{\innp}[2]{\langle #1, #2 \rangle}
\newcommand{\normW}[2]{\| #1 \|_{#2}}

\newcommand{\argmin}{\operatorname{arg\,min}}

\newcommand*{\cC}{\mathcal{C}}

\newcommand*{\RR}{\mathbb{R}}

\newcommand*{\NN}{\mathbb{N}}

\usepackage{booktabs}

\title{On the Iterate Convergence of AdaGrad for Generalized Smooth Convex Optimization}

\usepackage{authblk}

\author[1]{%
Mathieu Besançon\thanks{\texttt{mathieu.besancon@inria.fr}}%
}
\author[2]{%
Tung Quoc Le\thanks{\texttt{quoc-tung.le@univ-grenoble-alpes.fr}}%
}
\affil[2]{Université Grenoble Alpes, LJK, CNRS}
\affil[1]{Université Grenoble Alpes, Inria, LIG, CNRS}

\begin{document}

\maketitle

\begin{abstract}
We prove sequential convergence results for the AdaGrad algorithm family optimizing convex differentiable objectives. Specifically, we provide necessary and sufficient conditions for the convergence of iterates for the three main AdaGrad variants (AdaNorm, AdaDiag, AdaFull) when the objective is convex and locally Lipschitz-smooth, closing the question left open from the literature.
We harness this general result to study the three variants under the generalized $(L_0,L_1)$-smoothness condition and show sequential convergence for sufficiently small constant step size. Moreover, under the so-called $(L_0,L_1)$-polynomially modifiable smoothness assumption, which is a relaxation of the $(L_0,L_1)$ generalized smoothness property and is satisfied by many function classes such as $L$-smooth functions or univariate polynomials, sequential convergence for these AdaGrad variants is proved for arbitrary learning rates. This result provides conditions under which AdaGrad presents adaptivity, i.e., does not require tuning the parameters based on the instance.
Finally, we provide numerical illustrations of the behavior of AdaGrad on convex and nonconvex functions. In particular, we construct a counterexample empirically showing that smoothness alone is not sufficient for the sequential convergence of AdaGrad-type algorithms, and suggesting that additional geometric hypotheses (e.g., convexity as in this paper, or the Kurdyka-\L ojasiewicz inequality) are indispensable for sequential convergence results.
\end{abstract}

\section{Introduction}\label{sec:intro}
\paragraph{Setting} In this paper, we consider the unconstrained optimization problem:
\begin{equation*}
    \underset{x \in \RR^n}{\mathrm{Minimize}} \quad f(x),
\end{equation*}
where $f$ is convex and differentiable.
In contrast to a large body of existing literature, which assumes the global Lipschitz property of $\nabla f$, we focus on the case where $\nabla f$ is only locally Lipschitz (e.g., $(L_0,L_1)$-smoothness or variants) and the set of its minimizers $\argmin f \neq \emptyset$ (i.e., nonempty). We will investigate the \emph{sequential convergence} of the family of \emph{AdaGrad algorithms} \citep{duchi2011adaptive}, a staple of modern optimization for machine learning.

\paragraph{Why sequential convergence?} Theoretical guarantees for convex optimization algorithms take many different forms such as convergence in value, i.e., $f(x_k) \overset{k \to \infty}{\longrightarrow} \inf f$, convergence in gradient norm, i.e., $\|\nabla f(x_k)\| \overset{k \to \infty}{\longrightarrow} 0$ or sequential convergence (also known as iterate convergence), i.e., $\lim_{k \to \infty} x_k = x^\star \in \argmin f$ (assuming $\argmin f$ is  nonempty), as well as other quantitative counterparts. 

Mathematically, iterate convergence is the strongest notion since it implies convergence in value (if $f$ is continuous) and in gradient norm (if $f$ is $C^1$) while the converse implications do not necessarily hold.
Moreover, sequential convergence is an important measure for algorithmic stability and of practical interest in signal/image processing \citep{chaux2009nested} or statistics where we generally care more about the value of the minimizers than that of the training loss functions. 

Proving sequential convergence, however, is usually more involved. Indeed, even a natural question such as sequential convergence of gradient descent for convex smooth optimization remains open, despite its simplicity and popularity in many scientific domains.
Moreover, many other well-known algorithms, such as Gauss-Seidel (block-coordinate descent), gradient descent with line search, mirror descent \citep{bolte2020curiosity} and Frank-Wolfe \citep{bolte2024frankwolfe} do not converge in iterates in the smooth convex optimization setting.
Known results in this area usually combine geometric properties of the objective function (e.g., convexity, Kurdyka-\L ojasiewicz inequality, Lipschitz stratification) with the right assumptions on the algorithms (e.g., step sizes) to succeed \citep{attouch2005convergence,traore2021sequential,lai2026diametersubgradientsequencesominimal,chzhen2026convergenceratessubgradientdescent,absil2005convergence}. 

\paragraph{Contributions.}

\begin{table*}[htbp]
  \centering
  \begin{tblr}{
    hlines, vlines, rowsep = 4pt,
    colspec = {Q[c,m,wd=0.11\textwidth] Q[c,m,wd=0.22\textwidth]
               Q[c,m,wd=0.30\textwidth] Q[c,m,wd=0.26\textwidth]},
  }
    Algorithm / Setting & $L$-smooth (\Cref{def:L-smooth})
      & $(L_0,L_1)$ generalized smooth (\Cref{def:l0-l1-smooth})
      & $(L_0,L_1)$ polynomially modifiable smooth (\Cref{def:smoothness-polynomially-modifiable}) \\
    \ref{step:AdaNorm}
      & \SetCell[r=2]{c,m} \citet{traore2021sequential}: for any learning rate $\eta > 0$
      & \textbf{\Cref{thm:adanorm-main}}: for $0 < \eta \leq \frac{1}{2L_1}$
      & \SetCell[r=3]{c,m} \textbf{\Cref{theorem:adagrad-l0-l1-modifiable-convergence}}: for any $\eta > 0$ \\
    \ref{step:AdaDiag} &
      & \SetCell[r=2]{c,m} \textbf{\Cref{theorem:adagrad-l0-l1-convergence}}: for $0 < \eta \leq \eta^\star$ ($\eta^\star$ depends on $L_0,L_1$ and initialization)
      & \\
    \ref{step:AdaFull} & \textbf{\Cref{cor:adafull-L-smooth}}: for any $\eta > 0$ & & \\
  \end{tblr}
  \caption{Summary of sequential convergence results for the AdaGrad family. Our results are indicated with bold text.}
  \label{tab:4x3_table1}
\end{table*}

Our paper extends the understanding of sequential convergence of AdaGrad-type algorithms by proving the sequential convergence of three versions of AdaGrad algorithms, namely \ref{step:AdaNorm}, \ref{step:AdaDiag} and \ref{step:AdaFull} in a unified manner when minimizing Lipschitz-smooth or generalized smooth and convex functions.
In fact, it is known that \ref{step:AdaNorm} and \ref{step:AdaDiag}, \emph{with arbitrary step sizes}, can converge in iterates in smooth convex optimization. This shows the adaptivity of the method: one does not need to have access to the Lipschitz constant of the function gradient to achieve convergence properties. We take a step further, by proving the sequential convergence in the generalized smoothness setting. The new results are summarized in \cref{tab:4x3_table1}. Our contributions are given as follows:

\begin{enumerate}[leftmargin=*]
    \item We provide a necessary and sufficient condition for the sequential convergence of AdaGrad algorithms where $f$ is convex and $\nabla f$ is locally Lipschitz (cf. \Cref{theorem:unified-framework}).
    \item Harnessing \Cref{theorem:unified-framework}, we prove the sequential convergence of \ref{step:AdaNorm} under the assumption that $f$ is $(L_0, L_1)$ generalized smooth and convex and the learning rate $\eta \leq \frac{1}{2L_1}$ (cf. \Cref{thm:adanorm-main}). The same result is proved for \ref{step:AdaDiag} and \ref{step:AdaFull}, with $\eta \leq \eta^\star$, a threshold depending on $(L_0, L_1)$ and other parameters of the function (cf. \Cref{theorem:adagrad-l0-l1-convergence}).   
    \item We propose $(L_0,L_1)$ \emph{polynomially modifiable smoothness} (cf. \Cref{def:smoothness-polynomially-modifiable}), an intermediate property between $L$-smoothness and $(L_0,L_1)$ generalized smoothness which is satisfied by many $(L_0,L_1)$ smooth functions (e.g., univariate polynomial mapping, $\|\cdot\|_p^p$). Under this assumption, all three variants of AdaGrad converge in iterates, regardless of their step sizes (cf. \Cref{theorem:adagrad-l0-l1-modifiable-convergence}).
    \item We argue that smoothness may be insufficient to prove sequential convergence by providing an example of a bounded, $L$-smooth function whose AdaGrad iterates empirically fail to converge. Convexity, thus, is one of the sufficient conditions for sequential convergence.
\end{enumerate}

\paragraph{Related work.}
AdaGrad algorithms were extensively studied in the $L$-smooth case: \citet{rachel2019ada} showed that AdaGrad is not hyperparameter sensitive, and can match the performance of gradient descent without knowing the smoothness constant. If the function is also convex, \citet{traore2021sequential} further proved the sequential convergence. In the non-convex setting, subsequent works such as \citet{defossez2022a,zhou2024adagrad,kavis2022high,liu2023highprobability} focused on bounding the gradient norm in deterministic and stochastic settings. \citet{antonakopoulos2022adagrad} also proved that all three variants
of AdaGrad avoid saddle points, establishing preconditioner
stabilization and gradient summability under smoothness and
lower-boundedness (without convexity).

For $(L_0,L_1)$-smooth optimization,
\citet{ZhangKJ2020} showed that
gradient clipping accelerates training under such generalized
smoothness.
\citet{faw2023beyond}, \citet{bohan2023ada} and \citet{liu2025adagrad}
proved convergence \emph{rates}
$\min_{k\leq T}\norm{\nabla f(x_k)}^2 = \tilde{O}(1/T)$ (where $\tilde{O}(\cdot)$ hides logarithmic dependency) for \ref{step:AdaNorm} and \ref{step:AdaDiag}
in the stochastic nonconvex $(L_0,L_1)$-smooth setting and variants.
Our result establishes gradient \emph{summability} in the generalized smooth
and iterate \emph{convergence} in the convex setting, both of which are
qualitatively stronger.

\section{Problem formulations and mathematical tools}\label{sec:framework}

In this section, we introduce the AdaGrad family of optimization algorithms, the class of investigated objective functions and the mathematical tools that will be used to establish all the results.

\subsection{Variants of AdaGrad algorithms}
We first define the three AdaGrad variants through a common template in \cref{alg:adagradtemplate}.

\begin{algorithm_def}[AdaGrad template]\label{alg:adagradtemplate}
\begin{align}
& x_{k+1} = x_{k} - \eta\ G_k^{-\frac12} \nabla f(x_k) \tag{AdaGrad-Step}\label{step:AdaGrad-Step} \\
 & \text{where}  \nonumber \\
& G_k = \delta^2 I + \sum_{s=0}^k \norm{\nabla f(x_s)}^2  I \tag{AdaNorm} \label{step:AdaNorm} \\
& G_k = \delta^2 I + \sum_{s=0}^k \mathrm{diag}(\nabla f(x_s) \nabla f(x_s)^\top) \tag{AdaDiag} \label{step:AdaDiag} \\
& G_k =\delta^2 I + \sum_{s=0}^k \nabla f(x_s) \nabla f(x_s)^\top \tag{AdaFull} \label{step:AdaFull}
% & \Gamma_k = G_k^{-\frac12} \\
\end{align}
\end{algorithm_def}

\subsection{Assumptions on the objective functions}
In the following, we provide several definitions of smoothness, which will be used in this paper. We emphasize that no blanket assumptions will be made, and the assumptions for each theorem will be stated explicitly.
\begin{definition}[$L$-smooth]
\label{def:L-smooth}
$f$ is an $L$-smooth function if its gradient is $L$-Lipschitz, i.e.: $\|\nabla f(x) - \nabla f(y)\| \leq L \|x - y\|$.
\end{definition}
$L$-smoothness allows us to obtain the descent lemma:
\begin{align*}
  |f(y) - f(x) - \ip{\nabla f(x)}{y - x}|
  \leq \frac{L}{2}\norm{y - x}^2, \forall x, y \in \R^n.
\end{align*}
The above inequality shows that $f$ is upper-bounded by a quadratic function, which is not true for many practical cases.
\citet{ZhangKJ2020} proposes to study so-called $(L_0,L_1)$-generalized smooth functions, i.e.:
\begin{definition}[$(L_0,L_1)$ generalized smooth]
\label{def:l0-l1-smooth}
A function $f$ is $(L_0,L_1)$ (generalized) smooth if: 
$$\|\nabla^2 f(x)\| \leq L_0 + L_1 \|\nabla f(x)\|, \forall x \in \RR^n.$$
\end{definition}
In particular, an $L$-smooth function is an instance of $(L_0,L_1)$-smooth functions. We also have a generalized version of the descent lemma as well:
\begin{lemma}[Generalized descent lemma, (Lemma 2.5, \citet{vankov2025optimizing})]
\label{lem:gen-descent}
If $f$ is an $(L_0,L_1)$-smooth function with $L_1 > 0$, for all $x, y \in \R^n$,
\begin{align}
  & \bigl|f(y) - f(x) - \ip{\nabla f(x)}{y-x}\bigr|
  \leq \\
  & \;\;\;\;\; \frac{L_0 + L_1\norm{\nabla f(x)}}{L_1^2}\,
  \varphi\bigl(L_1\norm{y-x}\bigr),\nonumber
\end{align}
where $\varphi(t) = e^t - t - 1$ satisfies
$\varphi(t) \leq t^2 e^t/2$\, for all $t \geq 0$.
\end{lemma}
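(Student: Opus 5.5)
The plan is to integrate the Hessian along the segment and bound its norm through a Grönwall-type control of the gradient norm there. Set $r = \norm{y-x}$ and $\gamma(t) = x + t(y-x)$ for $t \in [0,1]$. Taylor's formula with integral remainder gives
\begin{align*}
f(y) - f(x) - \ip{\nabla f(x)}{y-x} = \int_0^1 (1-t)\, \ip{\nabla^2 f(\gamma(t))(y-x)}{y-x}\, dt ,
\end{align*}
so
\begin{align*}
\bigl|f(y) - f(x) - \ip{\nabla f(x)}{y-x}\bigr| \le r^2 \int_0^1 (1-t)\, \norm{\nabla^2 f(\gamma(t))}\, dt .
\end{align*}
By \Cref{def:l0-l1-smooth}, $\norm{\nabla^2 f(\gamma(t))} \le L_0 + L_1 \norm{\nabla f(\gamma(t))}$. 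Everything therefore reduces to bounding the gradient norm along the segment.

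For that bound, define $u(t) = L_0 + L_1 \norm{\nabla f(\gamma(t))}$. The map $t \mapsto \nabla f(\gamma(t))$ is $C^1$ with derivative $\nabla^2 f(\gamma(t))(y-x)$, so $t \mapsto \norm{\nabla f(\gamma(t))}$ is absolutely continuous. Its upper Dini derivative is at most $\norm{\nabla^2 f(\gamma(t))}\, r \le u(t)\, r$. This gives the differential inequality $u'(t) \le L_1 r\, u(t)$, valid almost everywhere or in the Dini sense. To avoid differentiability issues of the norm at zero, I would use the integral form
\begin{align*}
u(t) \le u(0) + L_1 r \int_0^t u(s)\, ds ,
\end{align*}
which follows from the triangle inequality applied to $\nabla f(\gamma(t)) - \nabla f(x) = \int_0^t \nabla^2 f(\gamma(s))(y-x)\, ds$. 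Grönwall's lemma then yields $u(t) \le u(0)\, e^{L_1 r t}$, with $u(0) = L_0 + L_1\norm{\nabla f(x)}$.

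Plugging this bound into the remainder, set $a = L_1 r$ and compute $r^2 \int_0^1 (1-t) e^{a t}\, dt$. This equals $\frac{r^2}{a^2}\bigl(e^{a} - a - 1\bigr) = \varphi(a)/L_1^2$, and multiplying by $u(0)$ gives exactly the claimed bound. The case $a = 0$ is trivial. For the auxiliary inequality $\varphi(t) \le t^2 e^t/2$, compare the series: $\varphi(t) = \sum_{k\ge2} t^k/k!$ and $t^2 e^t/2 = \sum_{k\ge 2} t^k/(2(k-2)!)$, and $k! \ge 2(k-2)!$ holds termwise. Alternatively, the integral form $\varphi(a) = a^2\int_0^1(1-t)e^{at}\,dt \le a^2 e^a/2$ gives the same conclusion.

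The only delicate step is the Grönwall argument. It needs the integral (triangle-inequality) formulation so that no differentiability of $\norm{\nabla f}$ is required, and it implicitly assumes $f$ is $C^2$ so that \Cref{def:l0-l1-smooth} makes sense pointwise. If only a.e. twice differentiability were assumed, one would need an approximation argument, but I would take $f \in C^2$ as the definition implies.
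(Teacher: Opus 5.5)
Your proof is correct and complete. The integral inequality $u(t)\le u(0)+L_1 r\int_0^t u(s)\,ds$ follows from the triangle inequality as you say, and Gr\"onwall then gives $u(t)\le u(0)e^{L_1 r t}$. The identity $r^2\int_0^1(1-t)e^{at}\,dt=\varphi(a)/L_1^2$ is right, and the termwise comparison $2(k-2)!\le k!$ settles $\varphi(t)\le t^2e^t/2$.

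The paper gives no proof of its own here. It imports the statement from \citet{vankov2025optimizing}, whose Lemma~2.5 also supplies the gradient estimate \eqref{eq:gradient-relation} used later. That pairing suggests a two-stage organization: first apply Gr\"onwall to $\|\nabla f(\gamma(t))-\nabla f(x)\|$ to obtain \eqref{eq:gradient-relation}, then integrate it inside the first-order remainder $\int_0^1\langle\nabla f(\gamma(t))-\nabla f(x),\,y-x\rangle\,dt$. This differs from your route only in bookkeeping. By Fubini, $r\int_0^1\int_0^t r\,u(s)\,ds\,dt=r^2\int_0^1(1-s)\,u(s)\,ds$, which is exactly the quantity your second-order remainder produces, so both give the same constant.

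The two-stage version has the merit of stating the gradient bound explicitly. Your Gr\"onwall step, however, already yields what the proof of \Cref{theorem:adagrad-l0-l1-convergence} actually needs: $u(1)\le u(0)e^{L_1 r}$ with $L_1 r\le\ln 2$ gives $\|\nabla f(y)\|\le 2\|\nabla f(x)\|+L_0/L_1$.

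Your caveat about regularity is well placed. Taking $f\in C^2$, as \Cref{def:smoothness-polynomially-modifiable} does explicitly, makes both the Taylor formula and the fundamental theorem of calculus for $\nabla f$ along the segment unproblematic.
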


To ease the presentation, we will use the notations
\begin{align*}
& g_k = \nabla f(x_k) \quad, \quad S_T = \sum_{s=0}^T \norm{g_s}^2,
\end{align*}
in the rest of the paper.

\begin{theorem}[Necessary and sufficient conditions for sequential convergence of AdaGrad algorithms in the convex locally smooth setting]
\label{theorem:unified-framework}
Let $f$ be a convex function with locally Lipschitz gradient and nonempty $\argmin f$.
Consider the three AdaGrad variants of \cref{alg:adagradtemplate}, the following three statements are equivalent:
\begin{enumerate}[leftmargin=*]
    \item The sequence $\{x_k\}_{k \in \NN}$ converges to an element of $\argmin f$.
    \item The sequence $\{x_k\}_{k \in \NN}$ is bounded.
    \item The sequence $\{\|\nabla f(x_k)\|^2\}_{k \in \NN}$ is summable.
\end{enumerate}
\end{theorem}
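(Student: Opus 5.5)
We prove the cycle $1 \Rightarrow 2 \Rightarrow 3 \Rightarrow 1$. The implication $1 \Rightarrow 2$ is immediate, since a convergent sequence is bounded. The other two implications carry the content, and both use one fact. If the iterates stay in a bounded set $K$, then by local Lipschitzness $\nabla f$ is $L_K$-Lipschitz on a neighborhood of the convex hull of $K$. There the classical descent lemma holds with constant $L_K$, and so does the cocoercivity-type bound $f(y) \le f(x) + \ip{\nabla f(x)}{y-x} + \frac{L_K}{2}\norm{y-x}^2$. In all three variants $G_k \succeq \delta^2 I$ and $G_k$ is nondecreasing in the Loewner order, because each term added is positive semidefinite.

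\textbf{$2 \Rightarrow 3$.} Let $K$ contain all iterates and let $L=L_K$. The descent lemma gives
\[ f(x_{k+1}) \le f(x_k) - \eta\, g_k^\top G_k^{-1/2} g_k + \tfrac{L\eta^2}{2}\, g_k^\top G_k^{-1} g_k . \]
Suppose $\sum_k \norm{g_k}^2=\infty$. For AdaNorm, $G_k=(\delta^2+S_k)I$ grows without bound, so eventually $\eta L/(2\sqrt{\delta^2+S_k})\le 1/2$. The decrease is then at least $\frac{\eta}{2}\norm{g_k}^2/\sqrt{\delta^2+S_k}$. Summing and using $f\ge \inf f$ gives $\sum_k \norm{g_k}^2/\sqrt{\delta^2+S_k}<\infty$. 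This contradicts the standard divergence $\sum a_k/\sqrt{\sum_{s\le k}a_s}=\infty$ whenever $\sum a_k=\infty$.

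For AdaDiag and AdaFull the preconditioner may grow only in some directions, so I would avoid descent and use a potential argument instead. The gradients are bounded on $K$, say by $M$, so $\lambda_{\max}(G_k)\le \delta^2+\sum_{s\le k}\norm{g_s}^2$ in both variants. Since $x_k$ is bounded, the telescoping identity
\[ \sum_k \eta\, g_k^\top G_k^{-1/2} g_k = \sum_k \ip{g_k}{x_k-x_{k+1}} \]
can be controlled. Its right side equals $f(x_0)-f(x_{T+1})$ plus second-order terms, and the standard AdaGrad trace lemma bounds those by $\sum_k g_k^\top G_k^{-1} g_k \le \log\det(G_T/\delta^2)$, which grows only logarithmically. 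Comparing with $g_k^\top G_k^{-1/2}g_k\ge \norm{g_k}^2/\sqrt{\lambda_{\max}(G_k)}$ then forces $S_T^{1/2}\lesssim \log S_T$, so $S_T$ is bounded.

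\textbf{$3 \Rightarrow 1$.} Assume $S_\infty<\infty$. Then $G_k\uparrow G_\infty$ converges to a positive definite matrix, and $G_k^{1/2}$ converges as well. Take $x^\star\in\argmin f$ and use the weighted distance $\normW{x_k-x^\star}{G_k^{1/2}}^2$. Expanding the step gives
\[ \normW{x_{k+1}-x^\star}{G_k^{1/2}}^2 = \normW{x_k-x^\star}{G_k^{1/2}}^2 - 2\eta\ip{g_k}{x_k-x^\star} + \eta^2 g_k^\top G_k^{-1/2}g_k . \]
Convexity gives $\ip{g_k}{x_k-x^\star}\ge f(x_k)-f^\star\ge 0$, and the last term is at most $\eta^2\norm{g_k}^2/\delta$, which is summable. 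Switching the weight from $G_k^{1/2}$ to $G_{k+1}^{1/2}$ multiplies the norm by at most $1+\epsilon_k$, where $\epsilon_k$ is summable. For AdaNorm this follows from $\sqrt{(\delta^2+S_{k+1})/(\delta^2+S_k)}$. For the matrix variants it follows from operator monotonicity of the square root together with $\norm{G_{k+1}^{1/2}-G_k^{1/2}}\le \norm{g_{k+1}}^2/\delta$. The Robbins--Siegmund quasi-Fejér lemma then shows that $\normW{x_k-x^\star}{G_k^{1/2}}$ converges for every $x^\star\in\argmin f$, that $x_k$ is bounded, and that $\sum_k (f(x_k)-f^\star)<\infty$.

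Hence every cluster point of $x_k$ is a minimizer. An Opial-type argument with the converging metrics $G_k^{1/2}\to G_\infty^{1/2}$ finishes the proof: if $\bar x$ and $\hat x$ are two cluster points, the limits of $\normW{x_k-\bar x}{G_k^{1/2}}^2-\normW{x_k-\hat x}{G_k^{1/2}}^2$ force $\normW{\bar x-\hat x}{G_\infty^{1/2}}=0$, so $\bar x=\hat x$.

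The main obstacle is $2\Rightarrow3$ for AdaDiag and AdaFull. The key there is controlling the second-order terms by $\log\det$ while lower-bounding the first-order terms using only $\lambda_{\max}$.
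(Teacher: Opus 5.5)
Your proposal is correct and follows essentially the same route as the paper: the cycle $1\Rightarrow 2\Rightarrow 3\Rightarrow 1$, with $2\Rightarrow 3$ coming from the telescoped descent inequality on a compact set (your ``potential argument'' is exactly this), lower-bounded via $\lambda_{\max}(G_k)\le\delta^2+S_k$ and upper-bounded via the log-det/Sherman--Morrison trace bound, and $3\Rightarrow 1$ coming from variable-metric quasi-Fej\'er monotonicity in the $G^{1/2}$ metric together with a Lipschitz bound on the matrix square root. The only differences are that you handle AdaNorm and AdaDiag directly instead of citing Traor\'e--Pauwels, and that you write out the Robbins--Siegmund plus Opial argument rather than invoking Combettes--V\~u.
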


The proof is available in the supplementary materials and shows that the statements hold
through a cycle of implications:
\begin{align*}
1) \implies 2) \implies 3) \implies 1).   
\end{align*}

We emphasize that this framework covers functions whose derivatives are simply locally Lipschitz, and no global smoothness in the sense of \Cref{def:L-smooth} is needed. It allows us to cover a broader class of functions such as $(L_0,L_1)$-smooth functions (cf. \Cref{def:l0-l1-smooth}). 

In addition, \Cref{theorem:unified-framework} establishes the equivalence between sequential convergence and the summability of $\{ \norm{\nabla f(x_k)}^2 \}_{k \in \NN}$. Therefore, in the remainder of the paper, all of our proofs for sequential convergence consist in proving the finiteness of the sum $\sum_{k \in \NN} \|\nabla f(x_k)\|^2$. 

\section{Convergence of AdaGrad under $(L_0,L_1)$-smoothness}
In this section, we discuss the sequential convergence of the AdaGrad algorithms i.e., \cref{alg:adagradtemplate}. Our result is given as follows:

\begin{theorem}[Sequential convergence of \ref{step:AdaNorm} under $(L_0,L_1)$-smoothness]
\label{thm:adanorm-main}
Let $f$ be a convex, $(L_0,L_1)$-smooth function with $L_1 > 0$ and $\argmin f \neq \emptyset$. Then, for any $\delta > 0$ and
$\eta \leq 1/(2L_1)$, the sequence $\{x_k\}_{k\in\N}$ produced by
\ref{step:AdaNorm} converges to a point in $\argmin f$.
\end{theorem}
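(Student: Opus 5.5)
By \Cref{theorem:unified-framework}, it suffices to show $S_\infty=\sum_k\norm{g_k}^2<\infty$. I argue by contradiction, assuming $S_T\to\infty$. Write $b_k=\sqrt{\delta^2+S_k}$, so that $x_{k+1}=x_k-(\eta/b_k)g_k$ and $\norm{x_{k+1}-x_k}=\eta\norm{g_k}/b_k\le\eta$. The AdaNorm normalization keeps each step of length at most $\eta$, so $L_1\norm{x_{k+1}-x_k}\le \eta L_1\le 1/2$, and \Cref{lem:gen-descent} applies with a bounded exponential factor $e^{1/2}$.

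\textbf{Step 1 (descent inequality).} Apply \Cref{lem:gen-descent} with $x=x_k$ and $y=x_{k+1}$, using $\varphi(t)\le t^2e^t/2$ and $t=L_1\eta\norm{g_k}/b_k\le 1/2$:
\[
f(x_{k+1})\le f(x_k)-\frac{\eta\norm{g_k}^2}{b_k}+\frac{e^{1/2}}{2}(L_0+L_1\norm{g_k})\frac{\eta^2\norm{g_k}^2}{b_k^2}.
\]
The $L_1$ part is at most $\frac{e^{1/2}}{2}L_1\eta\cdot\frac{\norm{g_k}}{b_k}\cdot\frac{\eta\norm{g_k}^2}{b_k}$. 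Since $\norm{g_k}\le b_k$ and $\eta L_1\le 1/2$, this is at most $\frac{e^{1/2}}{4}\frac{\eta\norm{g_k}^2}{b_k}$, and $e^{1/2}/4<1/2$. This is exactly where the threshold $\eta\le 1/(2L_1)$ enters: it makes the $L_1$ part absorbable into the descent term. Hence
\[
f(x_{k+1})\le f(x_k)-c\,\frac{\eta\norm{g_k}^2}{b_k}+C\frac{\norm{g_k}^2}{b_k^2},
\]
with $c=1-e^{1/2}/4>0$ and $C=e^{1/2}L_0\eta^2/2$.

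\textbf{Step 2 (telescoping).} Sum from $0$ to $T$. Since $\sum_k\norm{g_k}^2/b_k^2\le\log(b_T^2/\delta^2)$ and $f\ge f^\star$,
\[
c\,\eta\sum_{k\le T}\frac{\norm{g_k}^2}{b_k}\le f(x_0)-f^\star+2C\log(b_T/\delta).
\]
The left side is at least $c\eta\,S_T/b_T$. Under $S_T\to\infty$ we have $S_T/b_T\sim\sqrt{S_T}$, which grows faster than $\log b_T\sim\tfrac12\log S_T$. This is a contradiction, so $S_\infty<\infty$.

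\textbf{Step 3 (conclusion).} Invoking \Cref{theorem:unified-framework} (3)$\Rightarrow$(1) gives convergence of $x_k$ to a point of $\argmin f$. Convexity is used only through this final invocation; Steps 1--2 use only lower-boundedness, provided $\argmin f\neq\emptyset$.

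The main obstacle is Step 1: controlling the cubic $L_1\norm{g_k}^3/b_k^2$ term. Without the AdaNorm normalization $\norm{g_k}\le b_k$, this term could dominate when gradients are large. The margin $e^{1/2}/4<1$ needs checking carefully, together with the case where early gradients are large relative to $\delta$; the bound $\norm{g_k}\le b_k$ holds uniformly, so that case should pose no issue.
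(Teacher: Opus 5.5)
Your proof is correct and follows the paper's argument essentially step for step. You apply \Cref{lem:gen-descent} with step length at most $\eta\le 1/(2L_1)$, absorb the cubic $L_1$ term via $\norm{g_k}\le\sqrt{\delta^2+S_k}$ to get the positive factor $1-\sqrt{e}\,\eta L_1/2$, compare the $\sqrt{S_T}$ growth of the left side against the $\ln S_T$ growth of the right side, and then invoke \Cref{theorem:unified-framework}. The only cosmetic difference is that you phrase the last step as a contradiction with $S_T\to\infty$, whereas the paper reads off boundedness of $S_T$ directly from the same inequality.
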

\begin{proof}
For $\eta \leq \frac{1}{2L_1}$, we have:
\begin{equation*}
    \|x_{k + 1} - x_k\| \leq \eta\left\|\frac{g_k}{\sqrt{\delta^2 + S_k}}\right\| \leq \eta \leq \frac{1}{2L_1}.
\end{equation*}
By the generalized descent lemma (cf. \Cref{lem:gen-descent}):
\begin{equation*}
    \begin{aligned}
        f(x_{k+1}) - f(x_k) \leq &- \eta \frac{\|g_k\|^2}{\sqrt{S_k + \delta^2}}\\ + &\sqrt{e}\eta^2\frac{L_0 + L_1\|g_k\|}{2}\frac{\|g_k\|^2}{\delta^2 + S_k}.
    \end{aligned}
\end{equation*}
Denote $C_1 = f(x_0) - \inf f$, telescoping gives us:
\begin{equation}
    \label{eq:telescope-adanorm-l0-l1}
    \eta \sum_{k = 0}^T \frac{\|g_k\|^2}{\sqrt{\delta^2 + S_k}} \leq C_1 + \sqrt{e}\eta^2 \sum_{k = 0}^T \frac{L_0 + L_1 \|g_k\|}{2} \frac{\|g_k\|^2}{S_k + \delta^2}.
\end{equation}
Since $\|g_k\| < \sqrt{S_k + \delta^2}$, we obtain:
\begin{equation*}
    \frac{\|g_k\|^3}{\delta^2 + S_k} \leq \frac{\|g_k\|^2}{\sqrt{S_k + \delta^2}}, \forall k \in \NN.
\end{equation*}
Applying this inequality, one obtains:
\begin{equation}
    \label{eq:telescope-L-smooth-alike-1}
    \underbrace{\eta\left(1 - \frac{\sqrt{e}\eta L_1}{2}\right)}_{>0} \sum_{k = 0}^T \frac{\|g_k\|^2}{\sqrt{\delta^2 + S_k}} \leq C_1 + \frac{L_0\sqrt{e}\eta^2}{2} \sum_{k = 0}^T \frac{\|g_k\|^2}{S_k + \delta^2}.
\end{equation}
We estimate the summations in both sides of \eqref{eq:telescope-L-smooth-alike-1}:
\begin{equation*}
    \sum_{k = 0}^T \frac{\|g_k\|^2}{\sqrt{\delta^2 + S_k}} \geq \frac{1}{\sqrt{S_T + \delta^2}}\sum_{k = 0}^T \|g_k\|^2 = \frac{S_T}{\sqrt{S_T + \delta^2}}.
\end{equation*}
\begin{equation*}
    \begin{aligned}
        \sum_{k = 0}^T \frac{\|g_k\|^2}{S_k + \delta^2} \leq \sum_{k = 0}^T \int_{S_{k-1}}^{S_k} \frac{dt}{t + \delta^2} = \int_{0}^{S_T} \frac{dt}{t + \delta^2} = \ln \frac{\delta^2 + S_T}{\delta^2}.
    \end{aligned}
\end{equation*}
Combining the last three inequalities, we obtain that $S_T$ is bounded, since the L.H.S of \eqref{eq:telescope-L-smooth-alike-1} having $O(\sqrt{S_T})$ growth while the R.H.S. of \eqref{eq:telescope-L-smooth-alike-1} grows like $O(\ln S_T)$. \Cref{theorem:unified-framework} allows us to conclude the proof.
\end{proof}

We briefly highlight links of the proof technique to the literature.
Estimating $\sum_{k\in\NN}\|g_k\|^2$ using different growth in the L.H.S and R.H.S of \eqref{eq:telescope-L-smooth-alike-1} is known (e.g., \citet{bohan2023ada}).
We simplify and adapt this technique in the deterministic setting. Comparing to other (deterministic) approaches, e.g., \citet{rachel2019ada,traore2021sequential} where one handles the first index $k_0$ where $S_{k_0}$ is bigger than the smoothness constant $L$, our manipulation is simpler, can be extended to other variants, and is suitable for pedagogical and presentation purposes.

In comparison to $L$-smooth cases, we have an extra condition on the learning rate $\eta \leq \frac{1}{2L_1}$, which is standard when dealing with $(L_0,L_1)$-smoothness. 
We do not know whether the condition $\eta \leq \frac{1}{2L_1}$ can be relaxed \footnote{In fact, \citet[Theorem 9]{bohan2023ada} states that if $\eta \geq \frac{9\sqrt{5}}{2L_1}$, the sequence of iterates might diverge for an adversarial $(L_0,L_1)$-smooth function. However, since their adversarial function is nonconvex,
it would not directly apply to our setting. Moreover, they also used an extension theorem in their construction, which is not known to preserve the $(L_0,L_1)$-generalized smoothness.}. 

We present in the following our results for \ref{step:AdaDiag} and \ref{step:AdaFull}.
\begin{theorem}[Sequential convergence of \ref{step:AdaDiag} and \ref{step:AdaFull} under $(L_0,L_1)$ smoothness]\label{theorem:adagrad-l0-l1-convergence}
Let $f:\RR^n \to \RR$ be a convex, $(L_0,L_1)$-smooth function with nonempty $\argmin f$.
For any $\delta > 0$ and $\mathcal{C} \subseteq \RR^n$ a compact subset of $\RR^n$, there exists a step-size threshold $\eta^\star := \eta^\star(\delta, \mathcal{C},L_0,L_1) > 0$ such that if $\eta < \eta^\star$ and $x_0 \in \mathcal{C}$, the iterates $\{x_k\}_{k \in \NN}$ generated by 
\ref{step:AdaDiag} (resp. \ref{step:AdaFull}) converge to a minimizer $x^\star \in \argmin f \neq \emptyset$. 
\end{theorem}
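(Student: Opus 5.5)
The plan is to reduce to \Cref{theorem:unified-framework}: I will show that $S_T=\sum_{k\le T}\norm{g_k}^2$ stays bounded. The route follows the proof of \Cref{thm:adanorm-main}, with one extra ingredient. For \ref{step:AdaDiag} and \ref{step:AdaFull}, the inequality $\norm{g_k}\le\sqrt{\delta^2+S_k}$, which absorbed the $L_1\norm{g_k}$ term there, is no longer available coordinatewise or in the eigenbasis of $G_k$. I will replace it by an a priori bound $\norm{g_k}\le M$ along the trajectory, with $M$ depending only on $\mathcal{C},L_0,L_1$. This is where the compact set $\mathcal{C}$ and the threshold $\eta^\star$ enter.

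\emph{Step 1 (bounded steps).} For \ref{step:AdaDiag}, each coordinate satisfies $|g_{k,i}|/\sqrt{\delta^2+\sum_{s\le k}g_{s,i}^2}\le 1$, so $\norm{x_{k+1}-x_k}\le \eta\sqrt n$. For \ref{step:AdaFull}, $G_k\succeq \delta^2I+g_kg_k^\top$ gives $g_k^\top G_k^{-1}g_k\le 1$, so $\norm{x_{k+1}-x_k}=\eta\sqrt{g_k^\top G_k^{-1}g_k}\le\eta$. If $\eta\le 1/(L_1\sqrt n)$, then $t=L_1\norm{x_{k+1}-x_k}\le 1$, and \Cref{lem:gen-descent} together with $\varphi(t)\le t^2e^t/2$ yields
\begin{equation*}
f(x_{k+1})\le f(x_k)-\eta\, g_k^\top G_k^{-1/2}g_k+\tfrac{\sqrt e}{2}\eta^2(L_0+L_1\norm{g_k})\, g_k^\top G_k^{-1}g_k .
\end{equation*}
Since $G_k\succeq\delta^2 I$ and $G_k$ commutes with its own powers, $G_k^{-1}\preceq \delta^{-1}G_k^{-1/2}$.

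\emph{Step 2 (a priori gradient bound and induction).} I will first prove the auxiliary fact that, for convex $(L_0,L_1)$-smooth $f$, there is a nondecreasing function $\Psi$ with $\norm{\nabla f(x)}\le \Psi(f(x)-\inf f)$. To get it, apply \Cref{lem:gen-descent} at $y=x-\tau\nabla f(x)$ with $\tau$ of order $1/(L_0+L_1\norm{\nabla f(x)})$; this gives $\norm{\nabla f(x)}^2\lesssim (L_0+L_1\norm{\nabla f(x)})(f(x)-\inf f)$, which can be solved for $\norm{\nabla f(x)}$. Set $M=\Psi\big(\max_{\mathcal C}f-\inf f\big)$, which is finite because $f$ is continuous and $\mathcal C$ is compact. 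Then choose
\begin{equation*}
\eta^\star=\min\Big\{\frac{1}{L_1\sqrt n},\ \frac{\delta}{\sqrt e\,(L_0+L_1M)}\Big\}.
\end{equation*}
I will show by induction that $f(x_k)\le f(x_0)$. If this holds at step $k$, then $\norm{g_k}\le M$, and the inequality of Step 1 becomes $f(x_{k+1})\le f(x_k)-\tfrac{\eta}{2}g_k^\top G_k^{-1/2}g_k\le f(x_k)$, which closes the induction. Telescoping then gives
\begin{equation*}
\sum_{k\le T} g_k^\top G_k^{-1/2}g_k\le \frac{2(f(x_0)-\inf f)}{\eta}
\end{equation*}
uniformly in $T$.

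\emph{Step 3 (from the telescoped bound to summability).} Since $G_k\preceq G_T$ for $k\le T$, operator monotonicity of the square root gives $G_k^{-1/2}\succeq G_T^{-1/2}$. Hence
\begin{equation*}
\sum_{k\le T}g_k^\top G_k^{-1/2}g_k\ \ge\ \mathrm{tr}\big(G_T^{-1/2}(G_T-\delta^2I)\big)=\sum_j\frac{\lambda_j-\delta^2}{\sqrt{\lambda_j}},
\end{equation*}
where the $\lambda_j$ are the eigenvalues of $G_T$. The same identity holds for \ref{step:AdaDiag}, since there $\sum_k g_k^\top G_k^{-1/2}g_k$ is also bounded below by $\mathrm{tr}(G_T^{-1/2}\sum_{k\le T} \mathrm{diag}(g_kg_k^\top))$. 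The left side is bounded uniformly in $T$, so every $\lambda_j$ is bounded, and therefore $S_T=\mathrm{tr}(G_T)-n\delta^2$ is bounded. \Cref{theorem:unified-framework} then concludes.

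I expect the main obstacle to be Step 2. It needs the gradient-versus-suboptimality inequality for convex $(L_0,L_1)$-smooth functions, with the constants tracked carefully, and it has to be combined with the induction so that the bound $\norm{g_k}\le M$ is established before it is used in the descent inequality at step $k$.
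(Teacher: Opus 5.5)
Your proof is correct, and it takes a genuinely different route from the paper's.

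\textbf{The paper's route.} The paper never shows that $f(x_k)$ decreases. It keeps the term $\|g_k\|\,g_k^\top G_k^{-1}g_k$ and bounds its sum by $n\sqrt{S_T\ln(1+S_T/\delta^2)}$ via Cauchy--Schwarz. The resulting inequality compares $\sqrt{S_T}$ against $\sqrt{S_T\ln S_T}$ and does not by itself bound $S_T$. So the paper builds a forbidden interval $[s_1,s_2]$, with $s_2$ exponential in $1/\eta^2$ and $s_1$ polynomial in $1/\eta$. It then uses the one-step growth bound $S_{k+1}\le 9S_k+2L_0^2/L_1^2$ to show that $S_T$ can never jump across that interval.

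\textbf{Your route.} You note that a lower-bounded $(L_0,L_1)$-smooth function has its gradient controlled by its suboptimality gap, which needs neither convexity nor coercivity. On the initial sublevel set the problem is therefore effectively $L$-smooth with $L\approx L_0+L_1M$. Taking $\eta\lesssim\delta/(L_0+L_1M)$ makes AdaGrad a descent method and bounds $\sum_k g_k^\top G_k^{-1/2}g_k$ outright. This is essentially the mechanism of \Cref{lemma:step-size-choice}, transplanted to the $(L_0,L_1)$ setting. Because you go straight to summability rather than to boundedness of the iterates, your argument also shows that the descent route works here without coercivity. The paper sets that route aside in its discussion after the theorem.

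\textbf{What each buys.} Your approach gives:
\begin{itemize}
\item a much shorter proof;
\item monotone function values;
\item milder dimension dependence: $1/\sqrt n$ for \ref{step:AdaDiag}, and none needed for \ref{step:AdaFull}, whose steps have length at most $\eta$ (the paper has $1/n$ and $1/\sqrt n$).
\end{itemize}
It costs two things:
\begin{itemize}
\item Your threshold, of order $\delta/(L_0+L_1^2\Delta(\mathcal C)+L_1\sqrt{L_0\Delta(\mathcal C)})$, is linear in $\delta$ and polynomial in $\Delta(\mathcal C)$ and $L_0$. The paper's threshold depends on these only through a $\sqrt{\ln(\cdot)}$ factor.
\item More importantly, your argument does not extend to \Cref{theorem:adagrad-l0-l1-modifiable-convergence} or \Cref{cor:adafull-L-smooth}. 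There $\eta$ is fixed and one sends $L_1=\ell_k\to 0$ with $L_0=P(1/\ell_k)$. Your condition $\eta\lesssim\delta/(L_0+L_1M)$ gets worse in that limit; even for $L$-smooth $f$ it only allows $\eta\lesssim\delta/L$. The paper's descent-free barrier argument is precisely what survives that limit and yields the arbitrary-step-size results.
\end{itemize}

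\textbf{A minor slip.} With $t\le 1$, the bound $\varphi(t)\le t^2e^t/2$ gives the factor $e/2$, not $\sqrt e/2$; the latter needs $t\le 1/2$. With your stated $\eta^\star$, the descent coefficient becomes $1-\sqrt e/2>0$ instead of $1/2$, so nothing breaks.
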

The proof of \Cref{theorem:adagrad-l0-l1-convergence} is significantly different from \Cref{thm:adanorm-main}, and is given in the supplementary materials. We provide a sketch of its proof in the following.
\begin{proof}[Sketch proof of \cref{theorem:adagrad-l0-l1-convergence}]
    We will deal with \ref{step:AdaDiag} in this proof. \ref{step:AdaFull} can be treated similarly and its proof is provided in the supplementary materials. 
    
    If we constrain $\eta^\star \leq \frac{\ln 2}{L_1\sqrt{n}}$, we get a similar descent lemma whose telescope resembles \Cref{eq:telescope-adanorm-l0-l1} as follows: 
    \begin{equation*}
        \eta \sum_{k = 0}^T  \underbrace{g_k^\top G_k^{-1/2} g_k}_{\Psi_k} \leq C_1 + \eta^2 \sum_{k = 0}^T (L_0 + L_1\|g_k\|) \underbrace{g_k^\top G_k^{-1} g_k}_{\Phi_k},
    \end{equation*}
    where $C_1:= {f(x_0) - f^\star}$. Thus, defining $\Delta(\mathcal{C}):= \max_{x_0 \in \mathcal{C}} f(x_0) - f^\star$, we deduce that with an initialization $x_0 \in \mathcal{C}$, one has:
    \begin{equation}
        \label{eq:telescope-abstract}
        \eta \sum_{k = 0}^T \Psi_k \leq \Delta(\mathcal{C}) + \eta^2 \sum_{k = 0}^T (L_0 + L_1\|g_k\|) \Phi_k.
    \end{equation}
    
    However, the rest of the proof cannot use the same approach as the proof of \Cref{thm:adanorm-main} due to the fact that:
    \begin{equation*}
        \|g_k\|g_k^\top G_k^{-1}g_k \nleq g_k^\top G_k^{-1/2}g_k.
    \end{equation*}
    The reason is that the elements of the diagonal matrix $G_k$ of \ref{step:AdaDiag} are the sums of the squares of the coordinates of $g_k$, and thus, the bound is false in general for \ref{step:AdaDiag} (and also \ref{step:AdaFull}).
        
    Instead, we will use a different strategy to prove the finiteness of $S_T$. We argue that if $\eta$ is chosen small enough, there exists an interval $[s_1, s_2]$ such that:
    \begin{enumerate}[leftmargin=*]
        \item $s_1 > S_0 = \|\nabla f(x_0)\|^2$.
        \item If $S_T \in [s_1, s_2]$, then \eqref{eq:telescope-abstract} cannot hold. Therefore, $S_T \notin [s_1, s_2], \forall T \in \NN$.
        \item The right-end point is much bigger than the left-end one in the sense that: $s_2 > 10s_1 + 2\frac{L_0^2}{L_1^2}$. This ensures that if $k \geq 1$ is the first index where $S_{k} \geq s_1$, then $S_k \in [s_1, s_2]$, which causes a contradiction.
    \end{enumerate}
    Thus, one can conclude that $S_T \leq s_1, \forall T \in \NN$. Our construction of $s_1$ and $s_2$ is given by:
    \begin{equation}
        \label{eq:s1-s2-construction}
        \begin{aligned}
            s_1 &= \left(\frac{8 \Delta(\mathcal{C})}{\eta}\right)^2 + \left(\frac{(32n\eta L_0)^4}{\delta^2} + \delta^2\right) + s_{\min},\\
            s_2 &= \delta^2 \left(\exp\left(\frac{1}{(4\eta L_1 n)^2}\right) - 1\right).\\
        \end{aligned}
    \end{equation}
    where $s_{\min}:= 10(B(\cC)+ \delta^2 / 3) + 2\frac{L_0^2}{L_1^2}$
    with the constant $B(\cC):= \max_{x_0 \in \mathcal{C}} \|\nabla f(x_0)\|^2$. 
    
    Since $s_1$ is polynomial w.r.t. $1/\eta$ and $s_2$ is exponential w.r.t. $1/\eta^2$, one has $s_2 \gg s_1$ when $\eta$ is small enough, concluding the proof.
    Detailed computations are available in the supplementary materials.
\end{proof} 

\begin{remark}
    In contrast to \Cref{thm:adanorm-main}, \Cref{theorem:adagrad-l0-l1-convergence} requires the step size threshold $\eta^\star$ to be dependent on $L_0, L_1, \delta$ and the set of initialization $\cC$, which is more restrictive. Nevertheless, to the best of our knowledge, our result is the first one to explicitly establish the finiteness of $\sum_k \|g_k\|^2$ under $(L_0,L_1)$-smoothness. Note that this result also implies that $\min_{0 \leq k \leq T} \|g_k\|^2 = O(1/T)$ for sufficiently small (but constant) learning rate $\eta$. 

    Moreover, one can explicitly estimate the value of the learning rate threshold $\eta^\star$ as follows.
    Define
    $\Delta(\cC) = \max_{x \in \cC} f(x) - f^\star$, $B(\cC) = \max_{x \in \cC} \|\nabla f(x)\|^2$, and $b=640\Delta(\cC)^2 / \delta^2$.
     Consider two algorithms:
    \begin{enumerate}[leftmargin = *]
        \item For \ref{step:AdaDiag}:
        \begin{align*}
            c &= \frac{10(32\sqrt{n} L_0)^4}{(L_1\delta)^4} + \frac{100 B(\cC)}{\delta^2} + \frac{22L_0^2}{\delta^2L_1^2} + 45, \\
            \eta^\star &= \frac{1}{4nL_1\sqrt{\ln(1 + 2c + 4096(b+1)^2n^4L_1^4)}}.
        \end{align*}
        \item For \ref{step:AdaFull}, we have:
        \begin{align*}
            c &= \frac{10(32{n} L_0)^4}{(L_1\delta)^4} + \frac{100 B(\cC)}{\delta^2} + \frac{22L_0^2}{\delta^2L_1^2} + 45, \\ 
            \eta^\star &= \frac{1}{4\sqrt{n}L_1\sqrt{\ln(1 + 2c + 4096(b+1)^2n^2L_1^4)}}.
        \end{align*}
    \end{enumerate}
    Thus, the step size threshold of \ref{step:AdaFull} has a dependency $1/\sqrt{n}$, while that of \ref{step:AdaDiag} is $1/n$ (where we recall that $n$ is the dimension of the variable $x$). The details of our derivations are given in the supplementary materials. 
\end{remark}

Lastly, we emphasize that the sufficiently small step size in our proof technique is different from those used in the literature, e.g., \citet{bolte2025inexact,josz2024global}:
\begin{enumerate}[leftmargin=*]
    \item Small step sizes allow one to obtain a (quasi)-descent phenomenon \citep{bolte2025inexact,josz2024global}. In our case, AdaGrad algorithms do not necessarily decrease the values of $f$.
    \item Coupling this phenomenon with the coercivity of $f$ allows one to prove that $x_k$ is bounded, which is equivalent to sequential convergence in our setting (cf. \Cref{theorem:unified-framework}). However, we do not assume that function $f$ to be coercive. 
\end{enumerate}
\section{Convergence of AdaGrad under $(L_0,L_1)$ polynomially modifiable smoothness}
It is shown in \Cref{thm:adanorm-main} and \Cref{theorem:adagrad-l0-l1-convergence} that under $(L_0, L_1)$ generalized smoothness, sequential convergence does hold when the learning rate is smaller than a certain threshold. These results are less satisfying than those under $L$-smoothness assumption, i.e., the AdaGrad family converges under any learning rate (\cref{theorem:unified-framework} and \cref{cor:adafull-L-smooth}).

In this section, we introduce a novel class of smooth functions --- $(L_0,L_1)$-polynomially modifiable smooth --- under which we can recover the sequential convergence for AdaGrad algorithms with arbitrary $\eta > 0$.

Intuitively, this class is a subset of $(L_0,L_1)$-smooth functions whose smooth constants $(L_0,L_1)$ can be varied such that $L_1$ can be chosen arbitrarily close to zero and $L_0$ has to be modified w.r.t. $L_1$ in a way that it is always bounded by $O(\mathtt{poly}(1/L_1))$. Its formal definition is given in \Cref{def:smoothness-polynomially-modifiable}. 
As we will explain, most of the known examples of $(L_0,L_1)$-smooth functions (except the exponential function) listed in \cite{ZhangKJ2020} are instances of this class.

\begin{definition}[$(L_0,L_1)$-polynomially modifiable smoothness]
    \label{def:smoothness-polynomially-modifiable}
    A $C^2$ function $f: \RR^n \to \RR$ is called $(L_0,L_1)$ polynomially modifiable smooth if there exists a sequence of positive scalars $\{\ell_k\}_{k \in \NN}$ and a polynomial $P:\RR_+ \to \RR_+$ such that:
    \begin{enumerate}
        \item $\lim_{k \to \infty} \ell_k = 0$.
        \item The function $f$ is $(P(1 / \ell_k), \ell_k)$-smooth, i.e.,
        \begin{equation*}
            \|\nabla^2 f(x)\| \leq P(1 / \ell_k) + \ell_k\|\nabla f(x)\|, \forall x \in \RR^n.
        \end{equation*}
    \end{enumerate}
\end{definition}

\Cref{def:smoothness-polynomially-modifiable} is motivated from the fact that many $(L_0,L_1)$ smooth functions are instances of \Cref{def:smoothness-polynomially-modifiable}.

\begin{example}
    \label{ex:L-smooth-is-good}
    An $L$-smooth function is also $(L_0,L_1)$ polynomially modifiable since one can take $P(1/\ell_k) = L$, the constant function.  
\end{example}

\begin{example}
    \label{ex:univariate-poly-is-good}
    A typical example of an $(L_0,L_1)$-smooth function is a univariate polynomial $R: \RR \to \RR$ with degree $\deg(R) \geq 2$. We show that $R$ is also an instance of \Cref{def:smoothness-polynomially-modifiable}. Indeed, since $Q:= \nabla R$ and $Q':= \nabla^2 R$ are also polynomial, by exploiting the factorization of $Q = a\prod_{i = 1}^{\deg(Q)} (x - z_i)$ (with potentially complex roots), we have:
    \begin{equation*}
        \begin{aligned}
            \left|\frac{\nabla^2 R(x)}{\nabla R(x)}\right| &= \left|\frac{Q'(x)}{Q(x)}\right| = \left|\sum_{i = 1}^{\deg(Q)} \frac{1}{x - z_i}\right|\\
            &\leq \sum_{i = 1}^{\deg(Q)} \frac{1}{|x - z_i|} \leq \sum_{i = 1}^{\deg(Q)} \frac{1}{|x| - |z_i|},
        \end{aligned}
    \end{equation*}
    for $|x| > z:= \max_{i = 1, \ldots, \deg(Q)} |z_i| \geq 0$. Thus, if $|x| \geq z + \frac{\deg(Q)}{\ell_k}$, we conclude that:
    \begin{equation*}
        \left|\frac{\nabla^2 R(x)}{\nabla R(x)}\right| \leq \deg(Q) \frac{\ell_k}{\deg(Q)} = \ell_k.
    \end{equation*}
    By choosing $P(1 / \ell_k) = \max_{|x| \leq z + \deg(Q)/\ell_k} |\nabla^2 R(x)|$, we ensure that:
    \begin{equation*}
        |\nabla^2 R(x)| \leq \ell_k|\nabla R(x)| + P(1/\ell_k),
    \end{equation*}
    for any $\ell_k > 0$. Note that $P(\cdot)$ is non-decreasing on $\RR_+$ and is a semi-algebraic function (since it is expressed as a maximum of a semi-algebraic function $|\nabla^2 R(x)|$). Therefore, it is polynomially bounded (i.e., there exists an integer $N > 0$ such that for sufficiently large $x > 0$, $P(x) < x^N$). To have a uniform bound on $\RR_+$, it is sufficient to choose a large enough constant $C$ such that $P(x) \leq x^N + C, \forall x \in \RR_+$.
\end{example}

\begin{example}
    \label{ex:Lp-norm-is-good}
    Another typical example of $L_0,L_1$-smooth function is $f(x) = \frac{1}{p} L_p^p(x) := \frac{1}{p}\sum_{i = 1}^n |x_i|^p, p \geq 2$.
    The gradient and Hessian of $f$ are given by:
    \begin{align*}
        & \nabla f(x) = \begin{pmatrix}
            \mathrm{sign}(x_1) |x_1|^{p - 1} \\ \ldots \\ \mathrm{sign}(x_n) |x_n|^{p - 1}
        \end{pmatrix} \\
        & \nabla^2 f(x) = (p-1) \begin{pmatrix}
            x_1^{p - 2} & \ldots & 0 \\
            \vdots & \ddots & \vdots \\
            0 & \ldots & x_n^{p - 2}
        \end{pmatrix}
    \end{align*}
    In each coordinate, we have:
    \begin{equation*}
        (p - 1)\left|x_i^{p - 2}\right| \leq \ell_k|x_i^{p - 1}| + \underbrace{\left(\frac{p - 1}{\ell_k}\right)^{p - 2}}_{P(1/\ell_k)}, \forall \ell_k > 0, x_i \in \RR.
    \end{equation*}
    As a consequence, we have:
    \begin{equation*}
        \begin{aligned}
            \|\nabla^2 f(x)\| &= \max_{i = 1, \ldots, n} (p-1)|x_i^{p-2}|\\
            &\leq \max_{i = 1, \ldots,n} \ell_k |x_i^{p - 1}| + P(1/\ell_k) \\
            &\leq \ell_k \|\nabla f(x)\| + P(1/\ell_k).
        \end{aligned}
    \end{equation*}
\end{example}

Thus, one can see that the class of $(L_0,L_1)$-polynomially modifiable smooth is rich and contains many typical examples of $(L_0,L_1)$-smooth functions (with some exceptions, such as the exponential function). In the following, we show that AdaGrad algorithms can converge in iterates when minimizing this class of functions with \emph{arbitrary step size}, just like the $L$-smooth setting in \cite{traore2021sequential}.

\begin{theorem}[AdaGrad sequential convergence under \Cref{def:smoothness-polynomially-modifiable}]
    \label{theorem:adagrad-l0-l1-modifiable-convergence}
    Let $f: \RR^n \to \RR$ be a convex, $(L_0,L_1)$ polynomially modifiable smooth function with nonempty $\argmin f$. Any sequence of iterates generated by \ref{step:AdaNorm}, \ref{step:AdaDiag}, or \ref{step:AdaFull} with $\delta >0$ and $\eta > 0$ converges to $x^\star \in \argmin f$.
\end{theorem}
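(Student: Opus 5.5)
By \Cref{theorem:unified-framework}, it suffices to show that $\sum_k \|g_k\|^2 < \infty$, i.e., that $S_T$ stays bounded. The plan is to reuse the proofs of \Cref{thm:adanorm-main} and \Cref{theorem:adagrad-l0-l1-convergence}, exploiting the freedom in the smoothness constants. Fix arbitrary $\eta>0$ and $\delta>0$. Since $\ell_k \to 0$, we choose an index $k$ so that the pair $(L_0,L_1) = (P(1/\ell_k),\ell_k)$ makes the given $\eta$ fall below the threshold required by those theorems. By definition $f$ is $(P(1/\ell_k),\ell_k)$-smooth, so the earlier results apply directly once the threshold condition holds.

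For \ref{step:AdaNorm} this is immediate. The threshold $\eta \le 1/(2L_1)$ depends only on $L_1$, so we pick $k$ with $\ell_k \le 1/(2\eta)$ and invoke \Cref{thm:adanorm-main} with $(L_0,L_1)=(P(1/\ell_k),\ell_k)$. The polynomial bound on $L_0$ is not even needed here; only $\ell_k\to 0$ matters.

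For \ref{step:AdaDiag} and \ref{step:AdaFull}, take $\mathcal{C}=\{x_0\}$ in \Cref{theorem:adagrad-l0-l1-convergence}, so that $\Delta(\mathcal{C}) = f(x_0)-f^\star$ and $B(\mathcal{C}) = \|\nabla f(x_0)\|^2$ are fixed constants. We must show that $\eta < \eta^\star(\delta,\{x_0\},P(1/\ell_k),\ell_k)$ for some $k$. The clean route is the interval argument of that proof, with $\eta$ fixed and $L_1=\ell\to 0$. We need $s_2 > s_1$ together with the separation condition $s_2 > 10s_1 + 2L_0^2/L_1^2$, plus the auxiliary requirement $\eta \le \ln 2/(L_1\sqrt n)$, which holds for small $\ell$. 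For fixed $\eta$, the quantity $s_1$ (including $s_{\min}$) is a polynomial in $L_0$ and $L_0/L_1$, so with $L_0 = P(1/\ell)$ it is bounded by a polynomial in $1/\ell$. On the other hand, $s_2 = \delta^2\bigl(\exp\bigl(1/(4\eta\ell n)^2\bigr)-1\bigr)$ grows like $\exp(c/\ell^2)$. An exponential in $1/\ell^2$ eventually dominates any polynomial in $1/\ell$, so for $k$ large enough all conditions hold. Equivalently, using the explicit threshold in the remark after \Cref{theorem:adagrad-l0-l1-convergence}, $c$ is polynomial in $1/\ell$, so $\eta^\star \approx 1/(4n\ell\sqrt{\ln(\mathrm{poly}(1/\ell))}) \to \infty$ as $\ell\to0$, and eventually $\eta^\star > \eta$.

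The main obstacle is making sure the proof of \Cref{theorem:adagrad-l0-l1-convergence} genuinely yields $\sup_T S_T<\infty$ for \emph{any} constants satisfying those inequalities, rather than only through the specific $\eta^\star$ formula. To handle this, I would restate its conclusion as the lemma: whenever $\eta\le \ln2/(L_1\sqrt n)$ (resp.\ the \ref{step:AdaFull} analogue) and $s_2 > 10 s_1 + 2L_0^2/L_1^2$, we have $S_T\le s_1$ for all $T$. I would then verify the polynomial-versus-exponential comparison, where the dominant term of $s_1$ is $(32n\eta P(1/\ell))^4/\delta^2 + 2P(1/\ell)^2/\ell^2$. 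Once boundedness of $S_T$ is established, \Cref{theorem:unified-framework} gives convergence of $x_k$ to a point of $\argmin f$.
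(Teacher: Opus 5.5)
Your proposal is correct and follows the paper's proof. For AdaNorm you pick $\ell_k \le 1/(2\eta)$ and apply the $(L_0,L_1)$ result. For AdaDiag/AdaFull you rerun the $[s_1,s_2]$ barrier argument with $(L_0,L_1)=(P(1/\ell_k),\ell_k)$, using that $s_2$ grows like $\exp(c/\ell_k^2)$ while $10s_1+2L_0^2/L_1^2$ is only polynomial in $1/\ell_k$. Your explicit choice $\mathcal{C}=\{x_0\}$ and your remark that the barrier argument needs only $\eta\le \ln 2/(L_1\sqrt n)$ plus the separation condition make explicit what the paper leaves implicit; the conditions $s_1\ge s_{\min}$ and $h>0$ on $[s_1,s_2]$ follow automatically from the construction.
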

\begin{proof}
    For \ref{step:AdaNorm}, the results are immediate since one can take $L_1$ arbitrarily small to satisfy $\eta \leq \frac{1}{2L_1}$.
    
    It remains to prove the result for \ref{step:AdaDiag} (and \ref{step:AdaFull}). The proof is identical to that of \Cref{theorem:adagrad-l0-l1-convergence}. The only difference is that one can no longer choose the learning rate $\eta$ sufficiently small to ensure that $\|\nabla f(x_k)\|^2$ is summable. 

    We can, however, modify the constants $(L_0, L_1)$ so that the same principle works. In particular, it is sufficient to show that we can choose $L_0, L_1$ such that: 
    \begin{enumerate}[leftmargin=*]
        \item $\eta \leq \frac{\ln 2}{L_1\sqrt{n}}$: This will be always possible, since $L_1$ can be taken arbitrarily close to zero due to \Cref{def:smoothness-polynomially-modifiable}.
        \item $s_2 \geq 10 s_1 + 2\frac{L_0^2}{L_1^2}$ (where $s_1, s_2$ are defined by \Cref{eq:s1-s2-construction}). The other conditions (i.e., $s_1 \geq s_{\min}$ and $h(x) > 0, \forall x \in [s_1,s_2]$) are satisfied automatically by construction or by the previous reasoning. 
        
        This is where one fully harnesses \Cref{def:smoothness-polynomially-modifiable}: we observe that $s_2$ is a exponential w.r.t. to $1 / L_1$, where $10s_1 + 2\frac{L_0^2}{L_1^2}$ is at most polynomial w.r.t. $1/L_1$ (because $L_0$ can be taken as $P(1/L_1)$ by \Cref{def:smoothness-polynomially-modifiable}). Therefore, with small enough $L_1$ (which is possible since $\lim_k \ell_k = 0$ as in \Cref{def:smoothness-polynomially-modifiable}), the condition $s_2 \geq 10 s_1 + 2\frac{L_0^2}{L_1^2}$ is satisfied and we conclude the proof.  
    \end{enumerate}
\end{proof}
Since an $L$-smooth function is also $(L_0,L_1)$ polynomially modifiable smooth, we have the following immediate result:
\begin{corollary}[Sequential convergence of \ref{step:AdaFull}]
    \label{cor:adafull-L-smooth}
    Let $f$ be a convex, $L$-smooth function with nonempty $\argmin f$. Any sequence of iterates generated by \ref{step:AdaFull} with $\delta > 0$ and $\eta > 0$ converges to a minimizer of $f$. 
\end{corollary}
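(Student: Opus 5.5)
The plan is to read \Cref{cor:adafull-L-smooth} as a direct specialization of \Cref{theorem:adagrad-l0-l1-modifiable-convergence}. The only thing to check is that a convex $L$-smooth function lies in the class of \Cref{def:smoothness-polynomially-modifiable}. Once that is done, the theorem gives convergence of \ref{step:AdaFull} for every $\delta>0$ and $\eta>0$.

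First, I would verify the hypotheses of \Cref{def:smoothness-polynomially-modifiable}. Take $\ell_k = 1/(k+1)$, so $\ell_k>0$ and $\ell_k\to 0$, and take the constant polynomial $P\equiv L$, as in \Cref{ex:L-smooth-is-good}. If $f$ is $C^2$, $L$-smoothness gives $\|\nabla^2 f(x)\|\le L \le L + \ell_k\|\nabla f(x)\|$ for all $x$ and all $k$, so $f$ is $(P(1/\ell_k),\ell_k)$-smooth for every $k$.

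The main obstacle is that \Cref{def:smoothness-polynomially-modifiable} asks for $f\in C^2$, whereas an $L$-smooth function is only $C^{1,1}$. I would avoid the Hessian altogether by rereading the proof chain of \Cref{theorem:adagrad-l0-l1-convergence}. The Hessian bound enters that argument only through the descent inequality of \Cref{lem:gen-descent}. For an $L$-smooth function the classical descent lemma gives $|f(y)-f(x)-\ip{\nabla f(x)}{y-x}|\le \frac{L}{2}\|y-x\|^2$, and this is dominated by the right-hand side of \Cref{lem:gen-descent} with $L_0=L$ and any $L_1>0$, because $\varphi(t)\ge t^2/2$. So every step of that proof, and hence of \Cref{theorem:adagrad-l0-l1-modifiable-convergence}, goes through with $L_0=L$ fixed and $L_1$ arbitrarily small, using only the descent inequality.

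With $L_0=L$ fixed, the two requirements in the proof of \Cref{theorem:adagrad-l0-l1-modifiable-convergence} are easy to meet. Take $L_1$ small enough that $\eta\le \ln 2/(L_1\sqrt n)$. Then use that $s_2$ grows like $\exp(c/L_1^2)$ while $10s_1+2L^2/L_1^2$ grows only polynomially in $1/L_1$, so $s_2\ge 10s_1+2L^2/L_1^2$ for small $L_1$. This gives $\sum_k\|\nabla f(x_k)\|^2<\infty$, and \Cref{theorem:unified-framework} (using that $\nabla f$ is globally, hence locally, Lipschitz) yields convergence of $x_k$ to a point of $\argmin f$.

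Because $L_1$ can be chosen after $x_0$ is fixed, we may take $\cC=\{x_0\}$. No restriction on the initialization therefore remains.
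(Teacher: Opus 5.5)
Your proposal is correct and follows the paper's route: view an $L$-smooth function as $(L_0,L_1)$ polynomially modifiable smooth with $P\equiv L$ (\Cref{ex:L-smooth-is-good}) and invoke \Cref{theorem:adagrad-l0-l1-modifiable-convergence}, where shrinking $L_1$ makes $s_2$ (exponential in $1/L_1^2$) dominate $10s_1+2L_0^2/L_1^2$ (polynomial in $1/L_1$). Your attention to the $C^2$ requirement, which the paper's one-line justification skips, is well placed, but the Hessian bound is not used only through the descent inequality: it also gives the gradient-growth relation \eqref{eq:gradient-relation}, which yields $S_{k+1}\le 9S_k+2L_0^2/L_1^2$. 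That relation also holds for $L$-smooth $f$ with $L_0=L$, since $\|\nabla f(y)-\nabla f(x)\|\le L\|y-x\|\le (L+L_1\|\nabla f(x)\|)\,(e^{L_1\|y-x\|}-1)/L_1$, so your conclusion stands.
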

In particular, \Cref{cor:adafull-L-smooth} completes the sequential convergence result for the AdaGrad family on $L$-smooth functions, left open since \citet{traore2021sequential}.

\section{Numerical illustrations}

We now provide numerical illustrations of AdaGrad algorithms.
The source code is available in the supplement.
We first start with the function:
\begin{align*}
f^{\mathrm{seg}}(x, y) = \left( \max\left\{ |x| -\frac12, 0\right\} \right)^4 + 2 y^4
\end{align*}
which is convex, $C^3$, and has $[-\frac12, \frac12] \times \{0\}$ as its set of minimizers.
It is also $(L_0,L_1)$ polynomially modifiable but not $L$-smooth, since its growth cannot be bounded by a quadratic function, thus illustrating one of the settings investigated in this work.
In \cref{fig:contoursegadagrad}, we represent the trajectories of the three AdaGrad variants on $f^{\mathrm{seg}}$ with step size $\eta=10$.
All algorithms converge to one of the two end points of the optimal segment.
A key observation is the two-phase behavior of the algorithm: the first two iterations overshoot the ideal step size (i.e., line search),
after which the conditioning matrix has collected enough information to stabilize the direction and magnitude of the updates.
\begin{figure}[ht]
    \centering
    \includegraphics[width=1.1\linewidth]{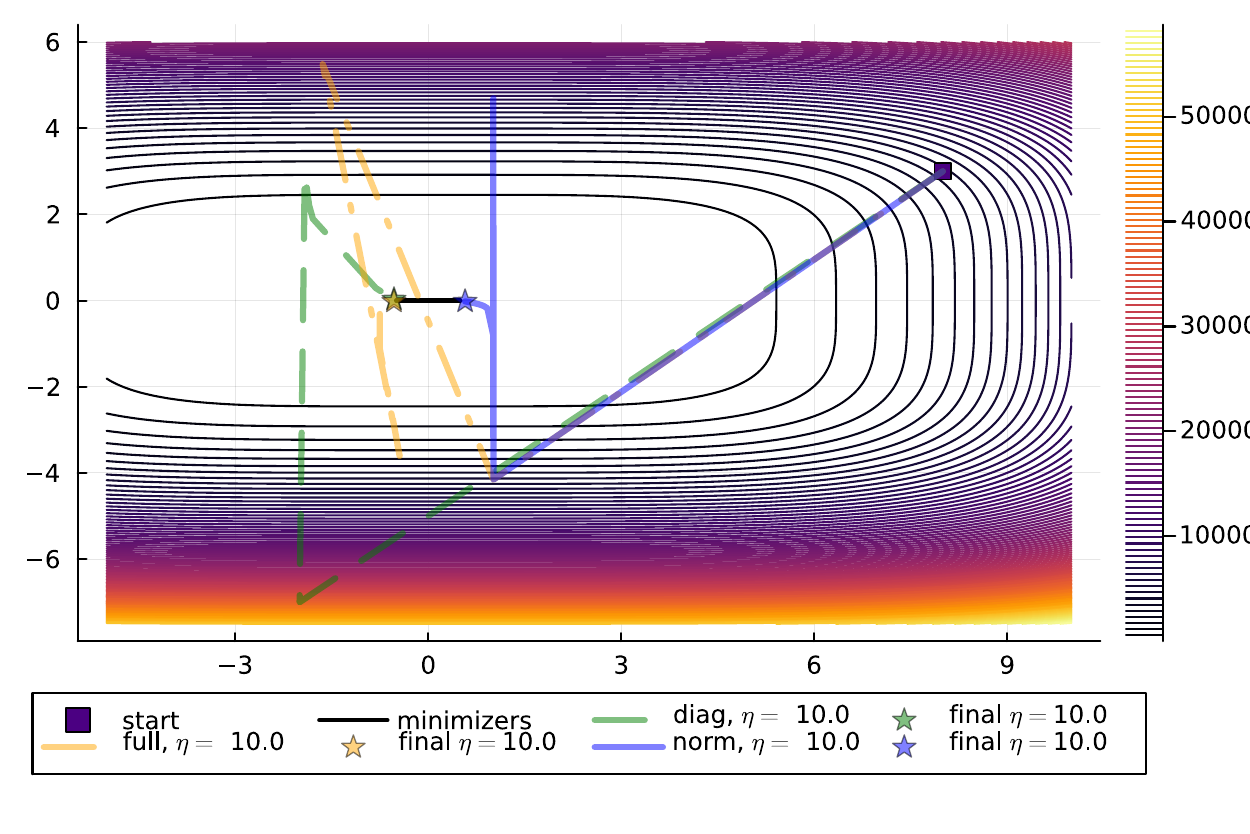}
    \caption{Trajectory of the three AdaGrad algorithms on the function $f^{\mathrm{seg}}$.}
    \label{fig:contoursegadagrad}
\end{figure}
\cref{fig:contoursegadadiag} provides a complementary view with the trajectory of AdaDiag, the main AdaGrad variant, with different step sizes.
At $\eta=1$, the algorithm (slowly) converges to the left-most point of the optimal segment while more aggressive step sizes $10$ and $20$ overshoot the
optimal segment in the first iteration before correcting and converging towards a single point.
\begin{figure}[ht]
    \centering
    \includegraphics[width=1.0\linewidth]{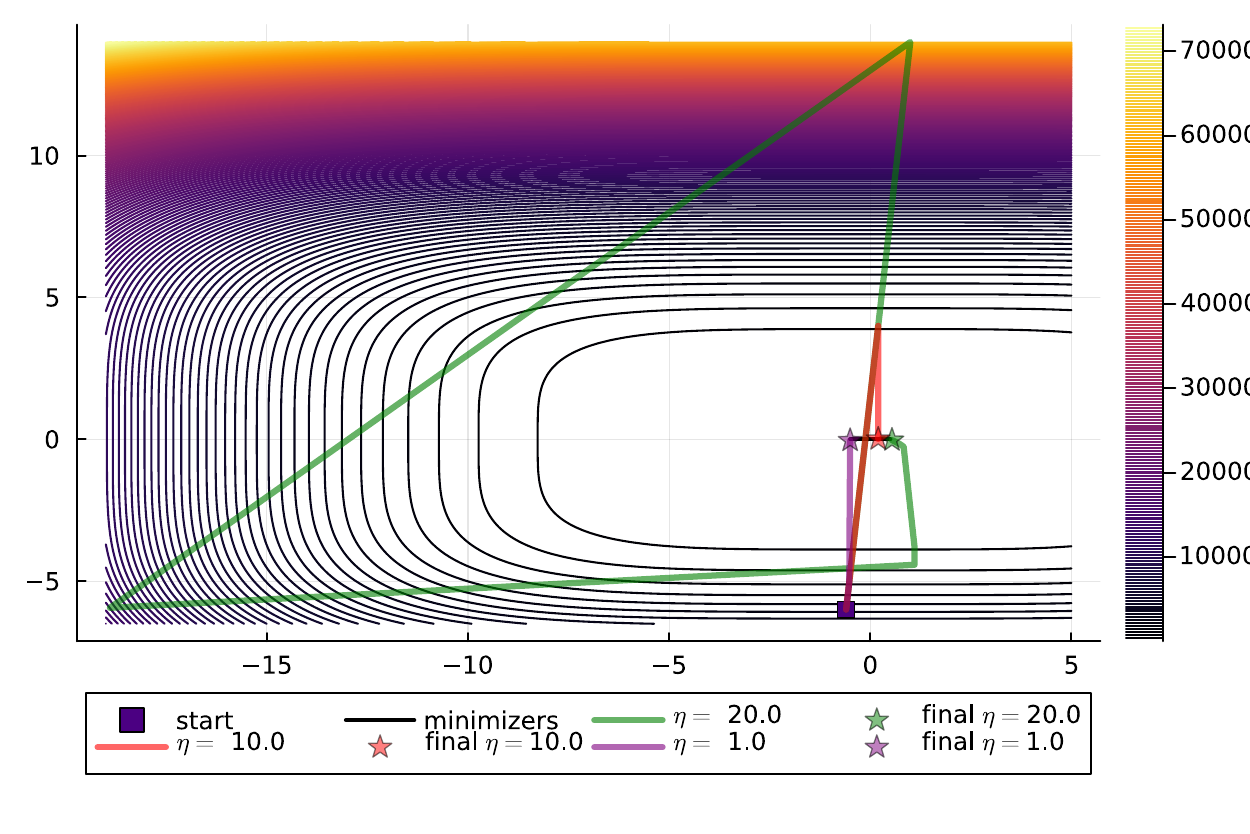}
    \caption{Trajectory of AdaDiag with different step sizes on the function $f^{\mathrm{seg}}$.}
    \label{fig:contoursegadadiag}
\end{figure}

We also observe the trajectory of AdaDiag with various step sizes on
a second test function:
\begin{align*}
f^{\cosh}(x, y) = \left( \max\left\{ |x| -\frac12, 0\right\} \right)^4 + \cosh(2 y)
\end{align*}
which is identical to $f^{\mathrm{seg}}$ in the $x$ component but with an exponential growth in the $y$ component.
The function is $(L_0, L_1)$ smooth but not $L$-smooth, nor polynomially modifiable due to the exponential growth.
The trajectories are represented in \cref{fig:contourcosh}.

\begin{figure}[ht]
    \centering
    \includegraphics[width=1.0\linewidth]{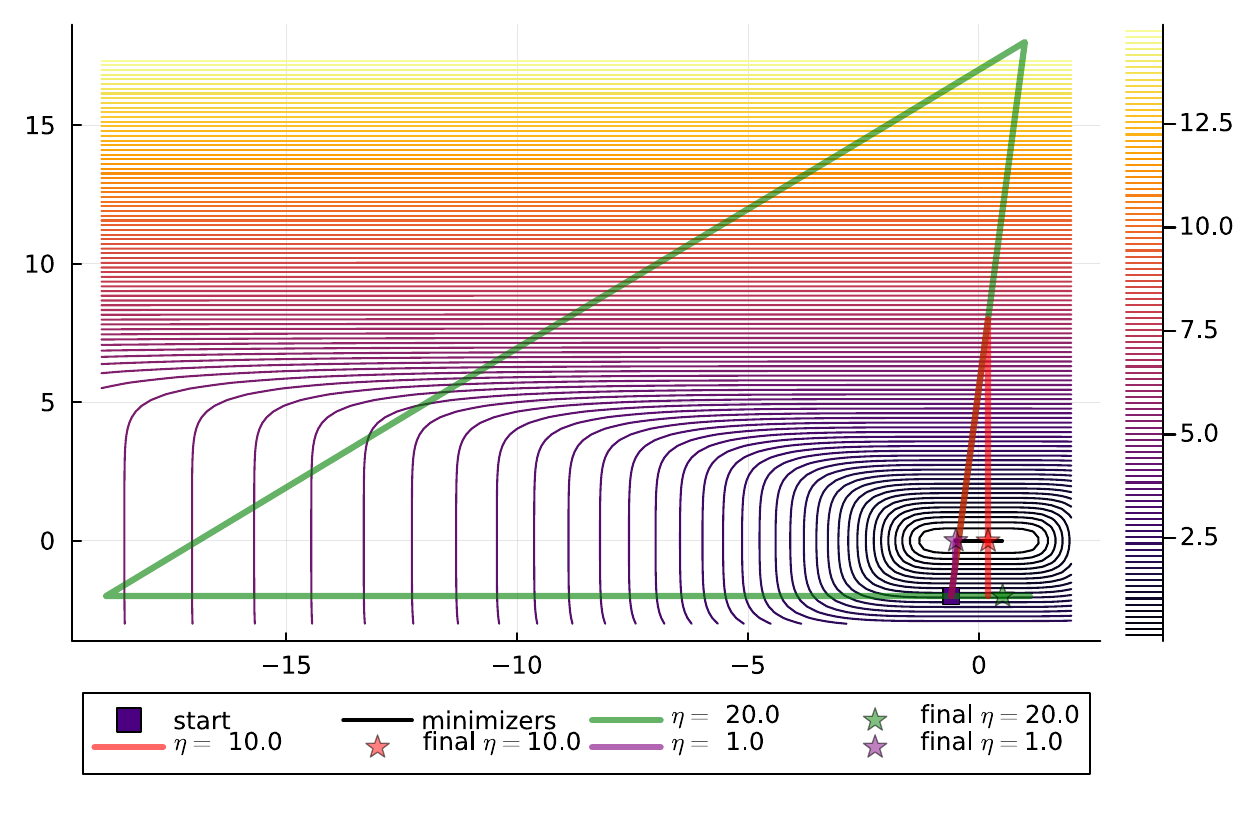}
    \caption{Trajectory of AdaDiag on the function $f^{\cosh}$. The contour levels are in log10 scale for readability.}
    \label{fig:contourcosh}
\end{figure}

Despite the algorithm being executed for $10^6$ iterations, we observe the largest step size stalling.
This is a known hurdle of AdaGrad algorithms, because the first steps reach regions with a very high gradient magnitude, the conditioning matrix $G_k^{-1/2}$
drops to very small values, in this case $10^{-15}$ in the y coordinate,
making the progress in that direction close to zero.

\section{An empirical counterexample to sequential convergence without convexity}

In this section, we provide empirical evidence that the convex assumption may be necessary for our results (e.g., \Cref{theorem:unified-framework}, \Cref{theorem:adagrad-l0-l1-modifiable-convergence}, etc.) to remain valid. 

Indeed, if $f$ is convex, differentiable and $\argmin f = \emptyset$, a valid instance is $f(x) = \exp(-x)$, which is convex, and $(0,1)$-smooth.
It is not hard to see that $x_k$ diverges to $+\infty$.
Thus, the assumption $\argmin f \neq \emptyset$ cannot be dropped.

We now consider a counterexample where $f$ is $L$-smooth and has a nonempty set of minimizers, but sequential convergence does not hold.  
The function $f: \RR^2 \to \RR $ takes inspiration from the construction of the \emph{Mexican hat function} from \citet{absil2005convergence}.
For parameters $(a, b, c, q, \omega)$, it is defined as:
\begin{align}
f(x, y) &= f_{\mathrm{rad}}( 1 - \sqrt{x^2 + y^2}, \mathrm{atan2}(y, x) ) \\
f_{\mathrm{rad}}( s, \theta ) &= 
\begin{cases}
    c s^{a+1} + q s^b (1 + \cos(\theta + \omega  \ln(s))) & \text{if } s \geq 0 \\
    0 & \text{otherwise.}
\end{cases}\nonumber
\end{align}
The function is $C^2$ on $\RR^2 \setminus \{0\}$ and has zero Hessian outside of the unit ball (provided that $a$ and $b$ are big enough).
It is not continuous at the origin where $\theta$ is undefined, which is irrelevant for the AdaGrad trajectories which stay bounded away from the origin.
The function could thus be replaced with a smooth extension around the origin to ensure it is globally $C^2$ with an upper-bounded Hessian,
making it $L$-smooth (and also $(L_0,L_1)$-generalized smooth/polynomially modifiable smooth).
In addition, it is nonnegative and attains its minimum of zero on any point with $\norm{\cdot} \geq 1$.
We run AdaGrad with parameters $(\eta=0.01, \delta=0.05)$
on the instance defined by $(a  = 5, b  = 3, c  = 1, q  = 0.3, \omega = 6)$ from a starting point
$(s_0=0.4, \theta_0 = -\omega \ln(s_0))$.
The function contour lines and the AdaGrad trajectory are displayed in \cref{fig:fulltrajectory}, clearly showing the infinite spiraling behavior of AdaGrad.
More precisely, the iterates form a sequence with an empirical rate $\theta_k \sim \ln k$,
as shown in \cref{fig:anglesequence}. Therefore, it appears impossible to have sequential convergence with smoothness alone. Other observations/results on this example are provided in the supplementary materials. \newline

We also highlight that convexity might not be the only sufficient condition for sequential convergence. Other known potential sufficient conditions include the Kurdyka-\L ojasiewicz inequality \cite{absil2005convergence,attouch2005convergence} or the Lipschitz stratification \cite{lai2026diametersubgradientsequencesominimal,chzhen2026convergenceratessubgradientdescent}; a promising future direction will be to investigate whether AdaGrad algorithms can converge in iterates under these conditions.

\begin{figure}[ht]
    \centering
    \includegraphics[width=1.1\linewidth]{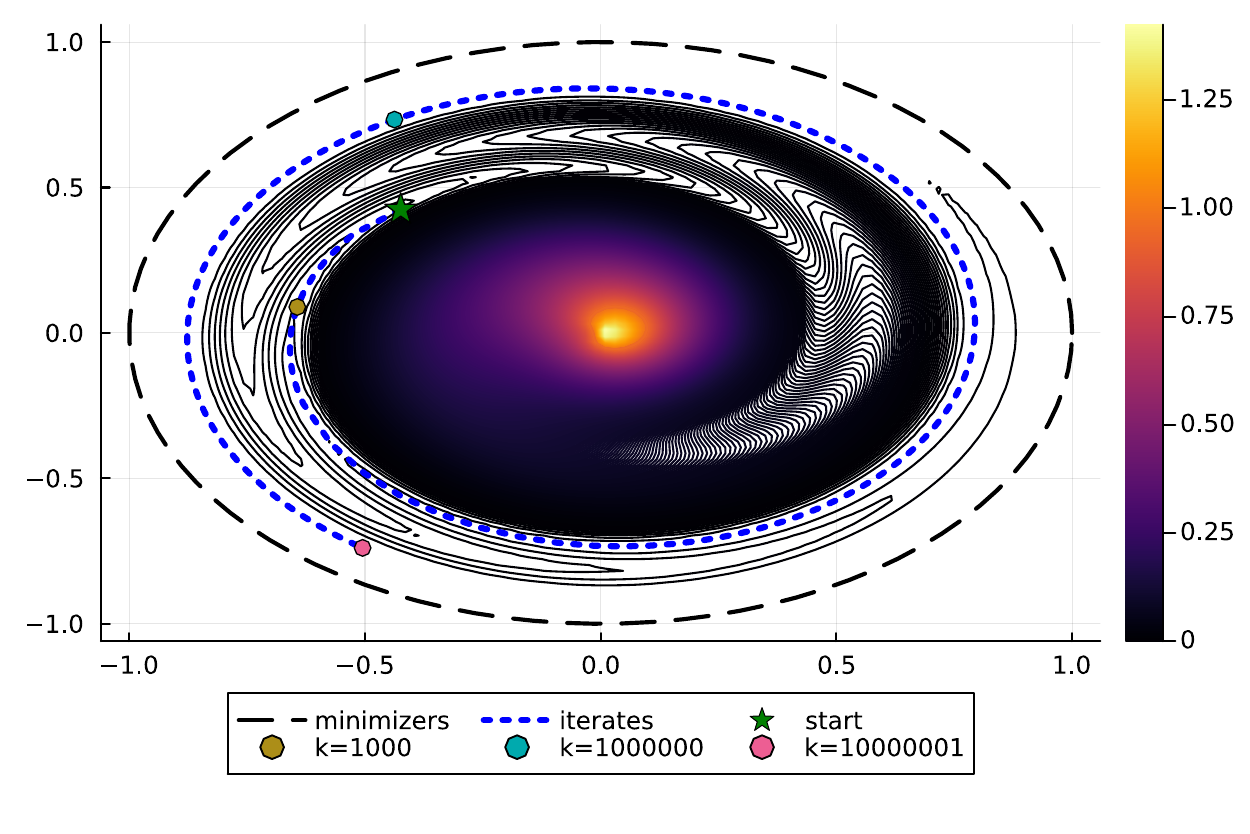}
    \caption{Trajectory of AdaGrad on the counterexample. Selected points are shown from the trajectory. Contour lines represent the function level sets.}
    \label{fig:fulltrajectory}
\end{figure}

\begin{figure}[ht]
    \centering
    \includegraphics[width=0.9\linewidth]{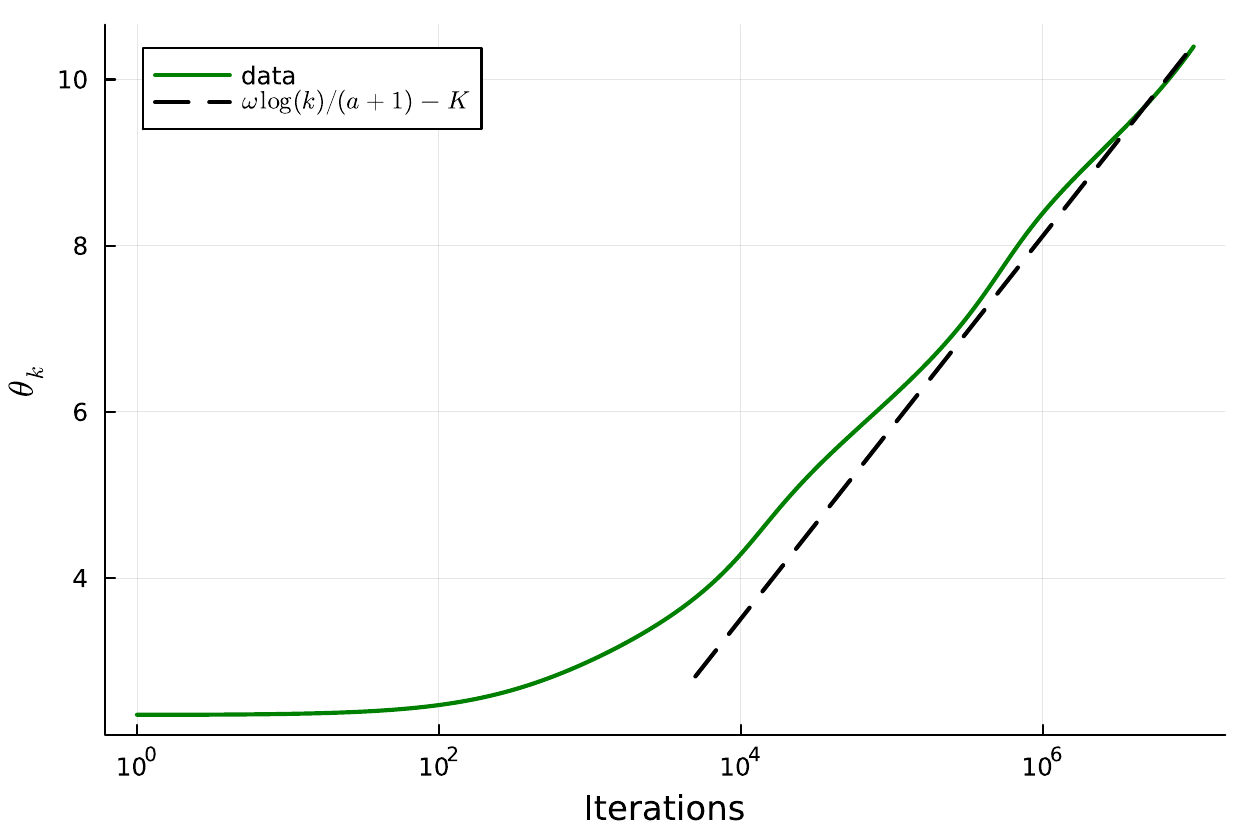}
    \caption{Sequence of iterate angles $\theta_k$ for the trajectory displayed in \cref{fig:fulltrajectory}.}
    \label{fig:anglesequence}
\end{figure}

\section{Conclusion}
In this paper, we deepened our understanding of the sequential convergence and the adaptivity of AdaGrad algorithms. In particular, we showed that three variants \ref{step:AdaNorm}, \ref{step:AdaDiag} and \ref{step:AdaFull} converge in iterates under $(L_0,L_1)$-generalized smoothness with sufficiently small constant step sizes, and under the novel $(L_0,L_1)$-polynomially modifiable smoothness with arbitrary step size using a unified framework (\Cref{theorem:unified-framework}).
We argue that our assumption on convexity is necessary since smoothness alone does not appear to be sufficient, based on a counterexample derived from the Mexican hat function.

Our work provides several results but also opens new questions.
One of them is defining other sufficient conditions for sequential convergence such as the Kurdyka-\L ojasiewicz inequality or the Lipschitz stratification. Another avenue of open questions includes the extension of these results to stochastic and constrained settings.

\section*{Acknowledgments}

This work was partially supported by the French ANR through the MIAI Cluster @ Grenoble (reference ANR-23-IACL-0006).
We would like to thank Panayotis Mertikopoulos for enlightening conversations on AdaGrad.

\bibliography{refs}

\appendix

\section{Variable metric quasi-Fejér monotonicity}

We recall the framework of \citet{combettes2013variable},
as used in \citet{traore2021sequential}. This is our main tool to prove sequential convergence.

\begin{definition}[Variable metric quasi-Fejér monotonicity]
\label{def:fejer}
Let $\{W_k\}_{k \in \N}$ be a sequence of symmetric positive definite
matrices satisfying $W_k \succeq \alpha I$ for all $k$ and some
$\alpha > 0$.
Let $C \subset \R^n$ be nonempty, closed, and convex.
A sequence $\{x_k\}_{k \in \N}$ in $\R^n$ is \emph{variable metric
quasi-Fejér monotone} with respect to $C$ relative to $\{W_k\}$ if
there exist summable nonnegative sequences $\{\rho_k\}_{k \in \N}$ and,
for each $z \in C$, $\{\varepsilon_k(z)\}_{k\in\N}$, in $\ell^1_+(\N)$,
such that
\begin{equation*}
  \normW{x_{k+1} - z}{W_{k+1}}^2
  \;\leq\;
  (1 + \rho_k)\,\normW{x_k - z}{W_k}^2 + \varepsilon_k(z),
  \, \forall\, k \in \N.
\end{equation*}
\end{definition}

In the scalar setting, $W_k = w_k I$ for a positive scalar $w_k$,
and the definition reduces to: for all $z \in C$,
\begin{equation*}
  w_{k+1}\norm{x_{k+1} - z}^2
  \;\leq\;
  (1 + \rho_k)\,w_k\norm{x_k - z}^2 + \varepsilon_k(z).
\end{equation*}

\begin{proposition}[Boundedness, {\cite[Proposition~3.2]{combettes2013variable}}]
\label{prop:bounded}
Let $\{x_k\}_{k\in\N}$ be variable metric quasi-Fejér monotone relative to a
nonempty closed convex set $C$. Then $\{x_k\}_{k\in\N}$ is bounded.
\end{proposition}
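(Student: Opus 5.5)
This is Proposition 3.2 of Combettes--V\~u, and it can be proved directly from \Cref{def:fejer}. Only the inequality with a single fixed $z \in C$ is needed, together with the uniform lower bound $W_k \succeq \alpha I$.

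\textbf{Step 1: reduce to a scalar recursion.} Fix any $z \in C$, which exists because $C$ is nonempty. Set
\[
a_k := \normW{x_k - z}{W_k}^2 \geq 0 .
\]
The defining inequality then reads
\[
a_{k+1} \leq (1+\rho_k)\, a_k + \varepsilon_k(z),
\]
where $\{\rho_k\}$ and $\{\varepsilon_k(z)\}$ are nonnegative and summable.

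\textbf{Step 2: show that $\{a_k\}$ is bounded.} Unrolling the recursion by induction gives
\[
a_{k} \leq \Bigl(\prod_{j=0}^{k-1}(1+\rho_j)\Bigr)\Bigl(a_0 + \sum_{j=0}^{k-1}\varepsilon_j(z)\Bigr).
\]
The induction step multiplies the bound for $a_k$ by $1+\rho_k$ and adds $\varepsilon_k(z)$. Since each product factor is at least $1$, we have $\varepsilon_k(z) \leq \prod_{j\le k}(1+\rho_j)\,\varepsilon_k(z)$, so the added term is absorbed into the bracket.

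Using $1+t \leq e^t$,
\[
\prod_j (1+\rho_j) \leq \exp\Bigl(\sum_j \rho_j\Bigr) < \infty .
\]
Hence
\[
\sup_k a_k \leq \exp\Bigl(\sum_j\rho_j\Bigr)\Bigl(a_0 + \sum_j \varepsilon_j(z)\Bigr) =: M < \infty .
\]

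\textbf{Step 3: transfer the bound to $\{x_k\}$.} Because $W_k \succeq \alpha I$, we get $\alpha\norm{x_k - z}^2 \leq a_k \leq M$. Therefore
\[
\norm{x_k} \leq \norm{z} + \sqrt{M/\alpha} \quad \text{for all } k,
\]
so $\{x_k\}$ is bounded.

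There is no real obstacle here. The only point that needs care is the product--sum estimate in Step 2, and it is handled cleanly by the $1+t\le e^t$ bound. Closedness and convexity of $C$ are not needed for boundedness; they matter only for the later convergence statements in \citet{combettes2013variable}.
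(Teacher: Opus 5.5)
Your proof is correct. You fix a single $z\in C$, unroll $a_{k+1}\le(1+\rho_k)a_k+\varepsilon_k(z)$ using $\prod_j(1+\rho_j)\le\exp(\sum_j\rho_j)$, and transfer the bound through $W_k\succeq\alpha I$; this is the standard argument, and you are right that only nonemptiness of $C$ is used. The paper gives no proof and simply cites \citet[Proposition~3.2]{combettes2013variable}. There the scalar step goes through a deterministic Robbins--Siegmund-type lemma, which gives the stronger conclusion that $\normW{x_k - z}{W_k}$ converges (the fact later needed for \Cref{thm:CV}); your unrolling gives only boundedness but is fully self-contained.
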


\begin{theorem}[Sequential convergence,
{\cite[Theorem~3.3]{combettes2013variable}}]
\label{thm:CV}
Let $(W_k)$ satisfy $W_k \succeq \alpha I$ for all $k$ and suppose
$W_k \to W_\infty$ as $k \to \infty$.
Let $\{x_k\}_{k\in\N}$ be variable metric quasi-Fejér monotone with respect to
a closed set $C \subset \R^n$.
Then $\{x_k\}_{k\in\N}$ converges to a point in $C$ if and only if every cluster
point of $\{x_k\}_{k\in\N}$ belongs to $C$.
\end{theorem}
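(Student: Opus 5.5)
We prove the two directions separately. \emph{Necessity} is immediate: if $x_k \to \bar x \in C$, then $\bar x$ is the only cluster point, and it lies in $C$. All the work is in \emph{sufficiency}. Throughout, $\norm{\cdot}_{W}$ denotes $\sqrt{\ip{W\cdot}{\cdot}}$. Note also that $W_\infty \succeq \alpha I$, since this inequality passes to the limit.

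The first step is a deterministic Robbins–Siegmund type lemma. Let $a_k \ge 0$ satisfy $a_{k+1} \le (1+\rho_k)a_k + \varepsilon_k$ with $\{\rho_k\},\{\varepsilon_k\}\in\ell^1_+$. Then $\{a_k\}$ converges. To prove it, set $\pi_k = \prod_{j<k}(1+\rho_j)$. This product is nondecreasing and bounded by $\exp(\sum_j \rho_j)$, so it converges to some $\pi_\infty \in [1,\infty)$. Now set
\[
b_k = \frac{a_k}{\pi_k} - \sum_{j<k} \frac{\varepsilon_j}{\pi_{j+1}} .
\]
Dividing the recursion by $\pi_{k+1}$ shows that $b_{k+1}\le b_k$. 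Moreover $b_k \ge -\sum_j \varepsilon_j > -\infty$, so $b_k$ converges. Since the partial sums converge and $\pi_k \to \pi_\infty$, it follows that $a_k$ converges.

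We apply this lemma with $a_k = \norm{x_k - z}_{W_k}^2$, for each fixed $z \in C$. Hence $\norm{x_k-z}_{W_k}^2$ converges for every $z\in C$. Because $W_k \succeq \alpha I$, we get $\norm{x_k - z}^2 \le a_k/\alpha$, so $\{x_k\}$ is bounded (this recovers \Cref{prop:bounded}, and convexity of $C$ is not needed). Next we transfer convergence to the fixed limit metric. We have
\[
\bigl|\norm{x_k-z}_{W_k}^2 - \norm{x_k-z}_{W_\infty}^2\bigr| \le \norm{W_k - W_\infty}\,\norm{x_k-z}^2 .
\]
The right-hand side tends to $0$ because $\{x_k\}$ is bounded and $W_k\to W_\infty$. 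Therefore $\norm{x_k - z}_{W_\infty}^2$ converges for every $z\in C$.

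Finally we show uniqueness of the cluster point, by an Opial-type argument. The sequence is bounded, so it has at least one cluster point. Suppose $y_1, y_2$ are cluster points; by hypothesis both lie in $C$. Expanding the squares gives
\[
\norm{x_k - y_1}_{W_\infty}^2 - \norm{x_k - y_2}_{W_\infty}^2 = 2\ip{x_k}{W_\infty(y_2 - y_1)} + \norm{y_1}_{W_\infty}^2 - \norm{y_2}_{W_\infty}^2 .
\]
The left-hand side converges, so $\ip{x_k}{W_\infty(y_2-y_1)}$ converges. Evaluating this limit along a subsequence tending to $y_1$ and along one tending to $y_2$ gives $\ip{y_1 - y_2}{W_\infty(y_2-y_1)} = 0$. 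Hence $\norm{y_1-y_2}_{W_\infty}^2 = 0$, and since $W_\infty \succeq \alpha I$, we conclude $y_1 = y_2$. A bounded sequence with a unique cluster point converges, so $x_k \to y_1 \in C$.

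The main obstacle is the varying metric. The quasi-Fejér inequality only gives monotonicity of $\norm{x_k - z}_{W_k}$, and this is not directly comparable across different $k$. It is handled by the two facts used above: the product $\pi_k$ stays bounded, and the metric converges with $W_k \to W_\infty$. It is the combination of boundedness and $W_k \to W_\infty$ that lets the whole argument be carried out in the single fixed norm $\norm{\cdot}_{W_\infty}$.
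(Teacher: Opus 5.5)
Your proof is correct. The paper gives no argument of its own and only cites \citet[Theorem~3.3]{combettes2013variable}, but your route is essentially the one used there: a Robbins--Siegmund/Polyak-type lemma gives convergence of $\|x_k - z\|_{W_k}$ for every $z \in C$ (their Proposition~3.2), and an Opial-type argument then shows the cluster point is unique. The one simplification is that in $\R^n$ you can use operator-norm convergence $W_k \to W_\infty$ to pass to the fixed metric $\|\cdot\|_{W_\infty}$ at the outset. In the Hilbert-space setting of the reference, with only pointwise convergence of $W_k$ and weak cluster points, one instead keeps $W_k$ in the cross term $\ip{x_k}{W_k(y_2 - y_1)}$ and uses boundedness of $\{x_k\}$ to control $\ip{x_k}{(W_k - W_\infty)(y_2-y_1)}$.
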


%======================================================================

\section{Proofs for the sequential convergence of AdaGrad under local L-smoothness}

We denote with $\Gamma_k$ the preconditioning matrix of AdaGrad algorithms, i.e.~$\Gamma_k = G_k^{-\frac12}$.

\begin{proof}[Proof of \cref{theorem:unified-framework}]
We will prove that $1) \implies 2) \implies 3) \implies 1)$.
\begin{enumerate}[leftmargin=*]
    \item $1) \implies 2)$: this is trivial by convergence to a single minimizer.
    \item $2) \implies 3)$: Since the sequence $\{x_k\}_{k\in\NN}$ is bounded, there exists a constant $M$ such that $\|x_k\| \in B(0,M), \forall k \in \NN$. Since the gradient of $f$ is locally Lipschitz, there exists a Lipschitz constant for the gradient of $f$ restricted to the ball $B(0,M)$, i.e., one can assume that $f$ is $L$-smooth.
    For \ref{step:AdaNorm} and \ref{step:AdaDiag}, these claims were proved in \citet[Lemma 3 and 4]{traore2021sequential} respectively. For \ref{step:AdaFull}, we provide the proof as follows: by applying the descent lemma for an $L$-smooth function, i.e., $f(y) \leq f(x) + \ip{\nabla f(x)}{y - x} + \frac{L}{2}\|y - x\|^2$, we have: for all $k \in \NN$,
    \begin{equation*}
        f(x_{k+1}) \leq f(x_k) - \eta g_k^\top \Gamma_k\,g_k
       + \frac{\eta^2 L}{2}\,g_k^\top \Gamma_k^2\,g_k,
    \end{equation*}
    where $\Gamma_k = G_k^{-1/2}$ and $g_k = \nabla f(x_k)$. To bound $S_T := \sum_{k = 0}^T \|\nabla f(x_k)\|^2$, we telescope and get:
    \begin{equation}
        \label{eq:telescope-L-smooth}
        \eta\left(\sum_{k = 0}^T g_k^\top\Gamma_kg_k\right) \leq f(x_0) - f(x_{T+1}) + \frac{\eta^2  L}{2} \left(\sum_{k = 0}^T g_k^\top \Gamma_k^2 g_k\right)
    \end{equation}
    We estimate each component as follows: since 
    \begin{equation}
        \label{eq:upper-bound-G-k}
        \|G_k\| \leq \delta^2 + \sum_{j = 0}^k \|g_jg_j^\top\| \leq \delta^2 + \sum_{j = 0}^k \|g_j\|^2 = \delta^2 + S_k,
    \end{equation}
    we deduce that $\Gamma_k \succeq \frac{1}{\sqrt{\delta^2 + S_T}}I, \forall k \leq T$, we get:
    \begin{equation}
        \label{eq:estimate-LHS}
        \sum_{k=0}^T g_k^\top\Gamma_kg_k \geq \frac{1}{\sqrt{\delta^2 + S_T}} \sum_{k = 0}^T \|g_k\|^2 = \frac{S_T}{\sqrt{S_T + \delta^2}}.
    \end{equation}
    Moreover, since $G_k = G_{k-1} + g_k g_k^\top$ with $G_{-1} = \delta^2 I$, the Sherman--Morrison formula gives
    \begin{align}\label{eq:shermanmorrison}
    g_k^\top G_k^{-1}\,g_k
    = \frac{g_k^\top G_{k-1}^{-1}\,g_k}
         {1 + g_k^\top G_{k-1}^{-1}\,g_k}.        
    \end{align}
    Since $x/(1+x) \leq \ln(1+x)$ for $x \geq 0$, summing and telescoping via the matrix determinant lemma:
    \begin{align*}
    \det(G_k) = \det(G_{k-1})(1 + g_k^\top G_{k-1}^{-1} g_k),
    \end{align*}
    \begin{equation}
        \label{eq:estimate-RHS}
        \begin{aligned}
            \sum_{k=0}^{T} g_k^\top G_k^{-1}\,g_k & = \sum_{k = 0}^T \frac{g_k^\top G_{k - 1}^{-1}g_k}{1 + g_k^\top G_{k - 1}^{-1}g_k}\\
            & \leq
            \sum_{k=0}^{T} \ln(1 + g_k^\top G_{k-1}^{-1}\,g_k)\\
            &= \sum_{k = 0}^T \ln(\det(G_{k})) - \ln(\det(G_{k - 1}))\\
            &= \ln \frac{\det(G_T)}{\delta^{2n}}\\
            &\leq n\ln \frac{\delta^2 + S_T}{\delta^2} \\
            &= n \ln \left(1 + \frac{S_T}{\delta^2}\right).
        \end{aligned}
    \end{equation}
    Combining \eqref{eq:telescope-L-smooth},\eqref{eq:estimate-LHS},\eqref{eq:estimate-RHS}, we obtain:
    \begin{equation*}
        \frac{\eta S_T}{\sqrt{S_T + \delta^2}} \leq f(x_0) - f^\star + \frac{\eta^2 L}{2} n \ln\left(1 + \frac{S_T}{\delta^2}\right),
    \end{equation*}
    where $f^\star = \inf f$, we conclude that $S_T$ is finite since the LHS scales like $\sqrt{S_T}$ while the RHS scales like $\ln(S_T)$. 
    \item $3) \implies 1)$: For \ref{step:AdaNorm} and \ref{step:AdaDiag}, these implications are proved in \citet[Theorem 3.1]{traore2021sequential}. For AdaFull, it is given as in the next paragraph.

    If $S_T$ is bounded, we deduce from \Cref{eq:upper-bound-G-k} that $\{G_k\}_{k \in \NN}$ is bounded and monotone increasing (in the sense $G_{k} \succeq G_{k-1}, \forall k \geq 1$). By \citet[Theorem 1.1]{behrndt2010monotone}, $G_\infty := \lim_{k \to \infty} G_k$ exists and satisfying $\|G_\infty\| \leq \delta^2 + S_\infty$ (where $S_\infty := \lim_{k \to \infty} S_k$).   

    To prove that $\{x_k\}_{k\in\NN}$ converges to a minimizer of $f$, it is sufficient to show that:
    \begin{enumerate}[leftmargin=*]
        \item Every cluster point of $\{x_k\}$ belongs to $C := \argmin f$.
        This is true since by our hypothesis, $\{\norm{\nabla f(x_k)}^2 \}_{k\in\NN}$ is summable, which implies that the set of cluster points is included in that of critical points (and equal to $\argmin f$ due to the convexity of $f$). 
        \item The sequence $\{x_k\}_{k \in \NN}$ is variable metric quasi-Fejér monotone w.r.t. $C$ relative to the sequence $\{W_k\}_{k \in \NN}, W_k = G_{k-1}^{1/2}, \forall k \in \NN$ (with the convention that $G_{-1} = \delta^2 I$).
        By invoking the result of \citet{combettes2013variable}, we conclude the proof.
    \end{enumerate}
    For readers who are not familiar with quasi-Fejér monotonicity, we refer to the first section of this supplementary materials.
    In particular, we have to prove that there exist a summable nonnegative sequence $\{\rho_k\}_{k \in \NN}$ and for each $z \in C$, $\{\epsilon_k(z)\}_{k \in \NN} \in \ell_+^1(\NN)$ such that:
    \begin{equation*}
        \normW{x_{k+1} - z}{W_{k+1}}^2
        \leq (1 + \rho_k)\,\normW{x_k - z}{W_k}^2 + \varepsilon_k(z), \forall\, k \in \N.
    \end{equation*}
    Indeed, fix an element $z \in \argmin f$. We have:
    \begin{align*}
        x_{k+1} - z = x_k - z - \eta \Gamma_{k} g_k,
    \end{align*}
    where we recall that $\Gamma_k = G_k^{-1/2}$. Hence, $W_{k+1} \Gamma_k = I$ and $\Gamma_k W_{k+1} \Gamma_k = \Gamma_k$:
    \begin{equation}
        \label{eq:adafull-expand}
        \normW{x_{k+1}-z}{W_{k+1}}^2 = \normW{x_k-z}{W_{k+1}}^2 - 2\eta\ip{g_k}{x_k - z} + \eta^2\normW{g_k}{\Gamma_k}^2.
    \end{equation}
By convexity, $\ip{g_k}{x_k - z} \geq f(x_k) - f(z) \geq 0$:
\begin{equation}\label{eq:adafull-after-convexity}
  \normW{x_{k+1}-z}{W_{k+1}}^2
  \leq \normW{x_k-z}{W_{k+1}}^2 + \eta^2\normW{g_k}{\Gamma_k}^2.
\end{equation}
Moreover,
\begin{align*}
  \normW{u}{W_{k+1}}^2 & \leq \normW{u}{W_k}^2 + \norm{W_{k+1}-W_k}\norm{u}^2 \\
  & \leq \Bigl(1 + \frac{\norm{W_{k+1}-W_k}}{\delta}\Bigr) \normW{u}{W_k}^2,
\end{align*}
since $W_k \succeq \delta I$. Define:
\begin{align*}
& \rho_k = \norm{W_{k+1}-W_k}/\delta \\
& \varepsilon_k = \eta^2\normW{g_k}{\Gamma_k}^2.
\end{align*}
Then
\[
  \normW{x_{k+1}-z}{W_{k+1}}^2
  \leq (1+\rho_k)\normW{x_k-z}{W_k}^2 + \varepsilon_k.
\]
\emph{Summability of $\{\rho_k\}$:}
By \cref{lem:sqrt-lip} and the fact that $G_{k}, G_{k + 1} \succeq \delta^2 I$, we have:
\begin{align*}
    \norm{W_{k+1} - W_k} &= \norm{G_{k}^{1/2} - G_{k-1}^{1/2}} \\
  &\leq \frac{1}{2\delta}\norm{G_{k} - G_{k-1}} = \frac{\norm{g_{k}}^2}{2\delta},
\end{align*}
combining with the summability of $\{\|g_k\|^2\}_{k\in\NN}$, implying that $\{\rho_k\}_{k\in\NN}$ is summable.\\
\emph{Summability of $\{\varepsilon_k\}$:}
Since $\Gamma_k \preceq \delta^{-1}I$,
$\normW{g_k}{\Gamma_k}^2 \leq \norm{g_k}^2/\delta$, so
$\varepsilon_k \leq C\norm{g_k}^2$ for a finite $C$.
\end{enumerate}
\end{proof}

\begin{lemma}[Operator Lipschitz property of $t \mapsto t^{1/2}$]
\label{lem:sqrt-lip}
Let $A, B$ be Hermitian with $A \succeq B \succeq mI$ for $m > 0$.
Then
\begin{equation}\label{eq:sqrt-lip}
  \norm{A^{1/2} - B^{1/2}}
  \;\leq\; \frac{1}{2\sqrt{m}}\,\norm{A - B}.
\end{equation}
\end{lemma}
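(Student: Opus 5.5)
We reduce to the scalar case by diagonalizing simultaneously the ingredients of an integral representation of the square root. For $X \succeq mI$ with $m>0$ we use
\begin{equation*}
  X^{1/2} \;=\; \frac{1}{\pi}\int_0^\infty \bigl(I - t(X+tI)^{-1}\bigr)\, t^{-1/2}\,dt,
\end{equation*}
which follows from the scalar identity $\sqrt{x} = \frac{1}{\pi}\int_0^\infty \frac{x}{x+t}\,t^{-1/2}\,dt$ applied through the spectral theorem. Subtracting the representations for $A$ and $B$ and applying the resolvent identity $(B+tI)^{-1} - (A+tI)^{-1} = (A+tI)^{-1}(A-B)(B+tI)^{-1}$ gives
\begin{equation*}
  A^{1/2} - B^{1/2} \;=\; \frac{1}{\pi}\int_0^\infty t^{1/2}\,(A+tI)^{-1}(A-B)(B+tI)^{-1}\,dt.
\end{equation*}

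Next we take operator norms inside the integral. Since $A, B \succeq mI$, both $\norm{(A+tI)^{-1}}$ and $\norm{(B+tI)^{-1}}$ are at most $1/(m+t)$. Hence
\begin{equation*}
  \norm{A^{1/2} - B^{1/2}} \;\leq\; \frac{\norm{A-B}}{\pi}\int_0^\infty \frac{t^{1/2}}{(m+t)^2}\,dt.
\end{equation*}
The remaining integral is elementary. Substituting $t = m u^2$ gives $\int_0^\infty \frac{2m^{3/2}u^2}{m^2(1+u^2)^2}\,du = \frac{2}{\sqrt{m}}\cdot\frac{\pi}{4} = \frac{\pi}{2\sqrt{m}}$. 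This yields exactly the constant $\frac{1}{2\sqrt{m}}$ in \eqref{eq:sqrt-lip}. The ordering $A \succeq B$ is not needed for this argument; only the lower bound $mI$ on both matrices matters.

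The main point to check is the validity of the integral representation and the interchange of norm and integral. The integrand is continuous in $t$ and decays like $t^{-3/2}$, so the integral is absolutely convergent in operator norm and both steps are justified.

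An alternative route, if one prefers to avoid integrals, is to set $D = A^{1/2} - B^{1/2}$ and write $A - B = A^{1/2}D + DB^{1/2}$. Take a unit eigenvector $v$ of the Hermitian matrix $D$ whose eigenvalue $\lambda$ satisfies $|\lambda| = \norm{D}$. Then $v^\top(A-B)v = \lambda\, v^\top(A^{1/2}+B^{1/2})v$, and since $A^{1/2}+B^{1/2} \succeq 2\sqrt{m}\,I$, we get $2\sqrt{m}\,|\lambda| \leq |v^\top(A-B)v| \leq \norm{A-B}$. This gives the same bound in a few lines, and I would likely present this shorter version.
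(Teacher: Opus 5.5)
Your integral argument is essentially the paper's proof. It uses the same resolvent representation of $M^{1/2}$ and the same resolvent identity: you factor it as $(A+tI)^{-1}(A-B)(B+tI)^{-1}$, the paper as $(tI+B)^{-1}(A-B)(tI+A)^{-1}$, and both are valid. It also uses the same bound $\norm{(tI+A)^{-1}}\le (t+m)^{-1}$ and the same substitution $t=mu^2$, giving $\pi/(2\sqrt{m})$. Your remark that only $A,B\succeq mI$ is used also applies to the paper's proof.

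The eigenvector argument you say you would actually present is a genuinely different and correct route. Set $D=A^{1/2}-B^{1/2}$, so that $A-B=A^{1/2}D+DB^{1/2}$. Take an extremal eigenvector $v$ of the Hermitian matrix $D$, so that $Dv=\lambda v$ and $v^*D=\lambda v^*$ with $\lambda$ real. Then $v^*(A-B)v=\lambda\,v^*(A^{1/2}+B^{1/2})v$, and the bound follows. For complex Hermitian matrices you should write $v^*$ rather than $v^\top$; since the $G_k$ in the paper are real symmetric, this is cosmetic there.

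Each approach has its own advantage:
\begin{itemize}
\item \textbf{Eigenvector route.} It is purely algebraic, shorter, and needs no integral representation or convergence check. However, it hinges on the noncommutative factorization $a-b=(\sqrt a-\sqrt b)(\sqrt a+\sqrt b)$, which is special to the square root. It also needs an exact extremal eigenvector, so in infinite dimensions you would have to pass to approximate eigenvectors.
\item \textbf{Resolvent route.} It extends verbatim to bounded operators. It also covers every power $t\mapsto t^p$ with $0<p<1$, via $x^p=\frac{\sin p\pi}{\pi}\int_0^\infty \frac{x}{x+t}\,t^{p-1}\,dt$, which yields the sharp bound $\norm{A^p-B^p}\le p\,m^{p-1}\norm{A-B}$.
\end{itemize}
Either proof suffices for the paper's use of the lemma, with $A=G_k$, $B=G_{k-1}$, $m=\delta^2$.

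One wording point: your opening phrase about ``diagonalizing simultaneously'' is misleading, since $A$ and $B$ need not commute. What you actually use, correctly, is the spectral theorem applied to each matrix separately.
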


\begin{proof}
We use the resolvent integral for the matrix square root.
For $M \succ 0$,
\[
  M^{1/2}
  = \frac{1}{\pi}\int_0^{\infty} t^{-1/2}\,M\,(tI+M)^{-1}\,dt.
\]
Therefore
\[
  A^{1/2} - B^{1/2}
  = \frac{1}{\pi}\int_0^{\infty}
    t^{-1/2}\bigl[A(tI+A)^{-1} - B(tI+B)^{-1}\bigr]\,dt.
\]
Using $M(tI+M)^{-1} = I - t(tI+M)^{-1}$, the integrand simplifies:
\[
  A(tI+A)^{-1} - B(tI+B)^{-1}
  = t\bigl[(tI+B)^{-1} - (tI+A)^{-1}\bigr].
\]
Moreover, since
$(tI+B)^{-1} - (tI+A)^{-1} = (tI+B)^{-1}(A-B)(tI+A)^{-1}$,
substituting and combining give us:
\[
  A^{1/2} - B^{1/2}
  = \frac{1}{\pi}\int_0^{\infty}
    t^{1/2}(tI+B)^{-1}(A-B)(tI+A)^{-1}\,dt.
\]
Since $A \succeq B \succeq mI$,
$\norm{(tI+A)^{-1}} \leq (t+m)^{-1}$ and likewise for $B$.
Taking operator norms:
\[
  \norm{A^{1/2} - B^{1/2}}
  \leq \frac{\norm{A-B}}{\pi}
       \int_0^{\infty} \frac{t^{1/2}}{(t+m)^2}\,dt.
\]
The substitution $t = m u^2$ gives
$\int_0^{\infty} t^{1/2}(t+m)^{-2}\,dt
= (2/m^{1/2})\int_0^{\infty} u^2(u^2+1)^{-2}\,du
= \pi/(2m^{1/2})$.
Therefore
$\norm{A^{1/2}-B^{1/2}} \leq \norm{A-B}/(2\sqrt{m})$.
\end{proof}

\section{Sequential convergence under $(L_0, L_1)$-smoothness}

In this section, we provide the proofs for the convergence results of AdaGrad algorithms under $(L_0, L_1)$-smoothness.

\begin{proof}[Proof of \cref{theorem:adagrad-l0-l1-convergence}]  
    We start with the proof for \ref{step:AdaDiag}. The treatment of \ref{step:AdaFull} will be given later.
    \paragraph{Proof for \ref{step:AdaDiag}} 
    Leveraging \Cref{theorem:unified-framework}, it is sufficient to prove that for any compact $\mathcal{C} \subseteq \RR^n$ and $\delta > 0$, there exists $\eta^\star$ such that if $\eta \leq \eta^\star$,
    $\{\|\nabla f(x_k)\|_2^2\}_{k\in\NN}$ is summable, i.e.,
    \begin{equation*}
        \sum_{k \in \NN} \|\nabla f(x_k)\|_2^2 < +\infty.
    \end{equation*}
    W.l.o.g., we assume that $f^\star := \inf f = 0$.

    Indeed, by the descent lemma for $(L_0,L_1)$-smooth functions, we obtain that:
    \begin{equation*}
        \begin{aligned}
            &\bigl|f(y) - f(x) - \ip{\nabla f(x)}{y-x}\bigr| \leq \\ &\frac{L_0 + L_1\norm{\nabla f(x)}}{2} \,\|y - x\|^2 \,\exp\bigl(L_1\norm{y-x}\bigr).
        \end{aligned}
    \end{equation*}
    Considering a learning rate threshold $\eta^\star < \frac{\ln 2}{L_1\sqrt{n}}$, one can obtain that:
    \begin{equation*}
        L_1\|x_{k+1}-x_k\| \leq \eta L_1\sqrt{n} \leq \ln 2.
    \end{equation*}
    As a consequence, we get:
    \begin{equation}
        \label{eq:l0-l1-descent-lemma-adagrad}
        \begin{aligned}
            &f(x_{k + 1}) - f(x_k) \\ 
            &\leq \ip{g_k}{x_{k + 1}-x_k}
            + (L_0 + L_1\norm{g_k}) \,\|x_{k+1} - x_k\|^2\\
            &= -\eta \underbrace{g_k^\top G_k^{-1/2} g_k}_{=: \Psi_k} + \eta^2 (L_0 + L_1\norm{g_k}) \underbrace{g_k^\top G_k^{-1}g_k}_{=: \Phi_k}.
        \end{aligned}
    \end{equation}
    Telescoping the above inequalities from $k = 0$ to $T$ gives us:
    \begin{equation}
        \label{eq:telescope-version-1}
        \eta \sum_{k = 0}^T \Psi_k \leq {f(x_0) - f^\star} + \eta^2 \sum_{k = 0}^T (L_0 + L_1\|g_k\|) \Phi_k.
    \end{equation}
    Let $\Delta(\mathcal{C}):= \max_{x_0 \in \mathcal{C}} f(x_0) - f^\star$, we deduce that with an initialization $x_0 \in \mathcal{C}$, one has:
    \begin{equation*}
        \eta \sum_{k = 0}^T \Psi_k \leq \Delta(\mathcal{C}) + \eta^2 \sum_{k = 0}^T (L_0 + L_1\|g_k\|) \Phi_k.
    \end{equation*}
    In the next step, we will provide an estimation for the summation in both sides of the above inequality. Recalling that $S_{T} = \sum_{k = 0}^T \|g_k\|^2$, we obtain:
    \begin{equation*}
        \begin{aligned}
            \sum_{k=0}^T \Psi_k &= \sum_{k=0}^T g_k^\top G_k^{-1/2}g_k \geq \sum_{k = 0}^T \frac{\|g_k\|^2}{\sqrt{\delta^2 + S_T}} = \frac{S_T}{\sqrt{S_T + \delta^2}}.
        \end{aligned}
    \end{equation*}
    Moving to the RHS terms, denote $V_{k,i} = \sum_{j = 0}^k g_{j,i}^2$, we have:
    \begin{equation*}
        \begin{aligned}
            \sum_{k = 0}^T \Phi_k &= \sum_{k = 0}^T g_k^\top G_k^{-1} g_k = \sum_{i = 1}^n \sum_{k = 0}^T \frac{g_{k,i}^2}{{\delta^2 + V_{k,i}}}\\
            &\leq \sum_{i = 1}^n \ln\left(1 + \frac{V_{T,i}}{\delta^2}\right) \leq n\ln \left(1 + \frac{S_T}{\delta^2}\right).
        \end{aligned}
    \end{equation*}
    Moreover,
    \begin{equation*}
        \begin{aligned}
            \sum_{k=0}^T \|g_k\|\Phi_k &= \sum_{i = 1}^n \sum_{k = 0}^T \|g_k\| \frac{g_{k,i}^2}{\delta^2 + V_{k,i}} \\
            &\leq \sum_{i = 1}^n \sqrt{\sum_{k = 0}^T \frac{g_{k,i}^2}{\delta^2 + V_{k,i}}} \sqrt{\sum_{k = 0}^T \|g_k\|^2\frac{g_{k,i}^2}{\delta^2 + V_{k,i}}}\\
            &\leq \sum_{i = 1}^n \sqrt{\ln\left(1 + \frac{V_{T,i}}{\delta^2}\right)}\sqrt{\sum_{k=0}^T \|g_k\|^2}\\
            &\leq \sum_{i = 1}^n \sqrt{\ln\left(1 + \frac{S_{T}}{\delta^2}\right)S_T}\\
            &= n \sqrt{\ln\left(1 + \frac{S_T}{\delta^2}\right)S_T}.
        \end{aligned}
    \end{equation*}
    Combining these estimations, we obtain:
    \begin{equation}
        \label{eq:key-inequality}
        \begin{aligned}
            \frac{S_T}{\sqrt{S_T + \delta^2}} \leq \frac{\Delta(\mathcal{C})}{\eta} + &\eta L_0 n \ln\left(1 + \frac{S_T}{\delta^2}\right) + \\
            &\eta L_1 n \sqrt{\ln\left(1 + \frac{S_T}{\delta^2}\right)S_T}.
        \end{aligned}
    \end{equation}
    Note that for a given $\eta > 0$, \Cref{eq:key-inequality} is not enough to prove the finiteness of $S_T$ since the LHS scales like $O(\sqrt{S_T})$ whereas the RHS scales like $O(\sqrt{S_T\ln{S_T}})$ and thus, no contradiction to be found if $S_T \overset{T \to \infty}{\to} +\infty$. Nevertheless, if $\eta$ is chosen small enough, one can prove that there exists an interval $[s_1, s_2], s_2 \geq s_1 > 0$ satisfying:
    \begin{enumerate}[leftmargin=*]
    \item $s_1 \geq s_{\min}:= 10(B(\cC)+ \delta^2 / 3) + 2\frac{L_0^2}{L_1^2}$
    with the constant $B(\cC):= \max_{x \in \mathcal{C}} \|\nabla f(x)\|^2$.
    \item $s_2 > 10 s_1 + \frac{2L_0^2}{L_1^2}$.
    \item $h(x) > 0, \forall x \in [s_1,s_2]$.
    \end{enumerate}
    where:
    \begin{multline*}
        h(x) = \frac{\sqrt{x}}{2} - \frac{\Delta(\mathcal{C})}{\eta} \\ -\eta n \left(L_0\ln\left(1 + \frac{x}{\delta^2}\right) + L_1\sqrt{x\ln\left(1 + \frac{x}{\delta^2}\right)}\right).
    \end{multline*}
    Before justifying the existence of $s_1,s_2$ (with a proper choice of learning rate threshold $\eta^\star$), we explain why it is sufficient to finish the proof: one can bound $S_T \leq s_1, \forall T \in \NN$. Indeed, the key relation that we exploit is the following: if $f$ is $L_0,L_1$-smooth, by \citet[Lemma 2.5]{vankov2025optimizing}:
    \begin{equation}
        \label{eq:gradient-relation}
        \|\nabla f(y) - \nabla f(x)\| \leq (L_0 + L_1\|\nabla f(x)\|)\frac{\exp(L_1\|x-y\|) - 1}{L_1}.
    \end{equation}
    If we apply \Cref{eq:gradient-relation} with $y = x_{k + 1}, x = x_{k}$ and $\eta \leq \frac{\ln 2}{L_1\sqrt{n}}$, we get:
    \begin{equation*}
        \|g_{k + 1}\| \leq 2\|g_k\| + \frac{L_0}{L_1}.
    \end{equation*}
    Using the fact that $(a + b)^2 \leq 2a^2 + 2b^2$, we have:
    \begin{equation*}
        \|g_{k + 1}\|^2 \leq 8\|g_k\|^2 + 2\frac{L_0^2}{L_1^2} \leq 8S_k + 2\frac{L_0^2}{L_1^2}, \forall k \in \NN.
    \end{equation*}
    Therefore,
    \begin{equation}
        \label{eq:next-G-k-estimation}
        S_{k + 1} = S_k + \|g_{k+1}\|^2 \leq 9S_k + 2\frac{L_0^2}{L_1^2}, \forall k \in \NN.
    \end{equation}
    we consider two cases:
    \begin{enumerate}[leftmargin=*]
        \item If $S_T \leq \delta^2/3, \forall T \in \NN$, then $S_T \leq \delta^2 \leq s_1$ by our assumption on $s_1$.
        \item Otherwise, $S_k \geq \delta^2 / 3$ starting from a certain index $K \in \NN$. Therefore:
        \begin{equation}
            \label{eq:estimation-lhs}
            \frac{S_k}{\sqrt{S_k + \delta^2}} \geq \frac{1}{2} \sqrt{S_k}, \forall k \geq K.
        \end{equation}
        We also argue that $S_K \leq s_1$ because: if $K = 0$, $S_0 = \|g_0\|^2 \leq s_1$ by our assumption on $s_1$. Otherwise, $S_{K-1} \leq \delta^2/3$ and by \eqref{eq:next-G-k-estimation}:
        \begin{equation*}
            S_K \leq 9 S_{K - 1} + 2\frac{L_0^2}{L_1^2} \leq 9 \frac{\delta^2}{3} + 2\frac{L_0^2}{L_1^2} \leq s_1,
        \end{equation*}
        again, by our choice of $s_1$. Using \eqref{eq:estimation-lhs} and \eqref{eq:key-inequality}, we conclude that $S_K \leq s_1$ and:
        \begin{equation*}
            h(S_k) \leq 0, \forall k \geq K.
        \end{equation*}
        For the sake of contradiction, assume that the increasing sequence $\{S_k\}_{k \in \NN}$ is eventually bigger than $s_1$. Let $\ell > K$ be the first index satisfying $S_\ell > s_1$. By \eqref{eq:next-G-k-estimation}, we have:
        \begin{equation*}
            S_\ell \leq 9S_{\ell - 1} + 2\frac{L_0^2}{L_1^2} \leq 9s_1 + 2\frac{L_0^2}{L_1^2} < s_2,
        \end{equation*}
        by our condition on $s_2$. Thus, $S_\ell \in [s_1,s_2]$, which implies $h(S_\ell) > 0$ - a contradiction.
    \end{enumerate}
    To finish the proof, for a fixed learning rate $\eta$, we consider two constants $s_1$ and $s_2$ as follows:
    \begin{equation}
        \label{eq:s1-s2-construction-adadiag}
        \begin{aligned}
            s_1 &= \left(\frac{8 \Delta(\mathcal{C})}{\eta}\right)^2 + \left(\frac{(32n\eta L_0)^4}{\delta^2} + \delta^2\right) + s_{\min},\\
            s_2 &= \delta^2 \left(\exp\left(\frac{1}{(4\eta L_1 n)^2}\right) - 1\right).\\
        \end{aligned}
    \end{equation}
    Note that this construction of $(s_1,s_2)$ does not automatically satisfy all our conditions on $s_1$ and $s_2$. In the rest of the proof, we argue that there exists a learning rate threshold $\eta^\star > 0$ such that for all $0 < \eta \leq \eta^\star$, such conditions will hold. 

    \begin{enumerate}[leftmargin=*]
        \item By definition, we have: $s_1 \geq s_{\min}$.
        \item Since $s_2$ is exponential w.r.t. $1/\eta$ and $s_1$ is only polynomial w.r.t. $1/\eta$ when $\eta \to 0$, there must exist a sufficiently small threshold $\eta^\star > 0$ such that if $\eta < \eta^\star$, one has: $s_2 > 10 s_1 + \frac{2L_0^2}{L_1^2}$.
        \item It remains to prove that $h(x) > 0$ for $x \in [s_1, s_2]$. Indeed, our choice of $s_2$ implies that:
        \begin{equation*}
            \eta L_1 n \sqrt{\ln\left(1 + \frac{x}{\delta^2}\right)} \leq \frac{1}{4}, \forall x \leq s_2.
        \end{equation*} 
        Our choice of $s_1$, on the other hand, implies that for $x \geq s_1$, one has:
        \begin{equation*}
            \begin{aligned}
                \frac{1}{8}\sqrt{x} &> \frac{\Delta(\mathcal{C})}{\eta},\\
                \frac{1}{8}\sqrt{x} &> \frac{1}{8}\sqrt[4]{x - \delta^2}\sqrt[4]{x + \delta^2} \geq \frac{4n\eta L_0}{\sqrt{\delta}}\sqrt[4]{x + \delta^2}\\
                &= 4n\eta L_0 \sqrt[4]{1 + \frac{x}{\delta^2}}\\
                &\geq 4n\eta L_0 \ln \left(\sqrt[4]{1 + \frac{x}{\delta^2}}\right) \quad (\text{since } x \geq \ln x, \forall x \geq 1)\\
                &= n\eta L_0 \ln \left(1 + \frac{x}{\delta^2}\right).
            \end{aligned}
        \end{equation*}
        Combining these inequalities, we achieve that for any $x \in [s_1, s_2]$:
        \begin{equation*}
            \begin{aligned}
                h(x) &= \left(\frac{1}{2} - \eta n L_1 \sqrt{\ln\left(1 + \frac{x}{\delta^2}\right)}\right)\sqrt{x} - \frac{\Delta(\mathcal{C})}{\eta}\\ 
                &- \eta n L_0 \ln\left(1 + \frac{x}{\delta^2}\right)\\
                &\geq \frac{1}{4} \sqrt{x} - \frac{\Delta(\mathcal{C})}{\eta} - \eta n L_0 \ln\left(1 + \frac{x}{\delta^2}\right) > 0. \\
            \end{aligned}
        \end{equation*}
    \end{enumerate}
    This concludes the proof for \ref{step:AdaDiag}.

    \paragraph{Proof for \ref{step:AdaFull}}
    We sketch the main difference between the proof of \ref{step:AdaDiag} and \ref{step:AdaFull}.
    In the case of \ref{step:AdaFull},
    since $\norm{x_{k+1} - x_k}^2 = \eta^2 \Phi_k$ and
    $\Phi_k \leq 1$ by \eqref{eq:shermanmorrison},
    we only need
    $\eta \leq \frac{\ln 2}{L_1}$ to obtain the generalized descent \eqref{eq:l0-l1-descent-lemma-adagrad} and the similar telescope \Cref{eq:telescope-version-1}. From here, the only difference is the estimation of $\sum_k \|g_k\|\Phi_k$.
    Recalling that $\Phi_k \leq 1$ by the Sherman-Morrison formula from \eqref{eq:shermanmorrison}, 
    we have:
    \begin{equation*}
        \begin{aligned}
            \sum_{k=0}^T \|g_k\|\Phi_k &= \sum_{k = 0}^T \|g_k\| g_k^\top G_k^{-1}g_k \\
            &\leq \sqrt{\sum_{k = 0}^T g_k^\top G_k^{-1}g_k} \sqrt{\sum_{k = 0}^T \|g_k\|^2g_k^\top G_k^{-1}g_k}\\
            &\leq \sqrt{n\ln\left(1 + \frac{S_T}{\delta^2}\right)}\sqrt{\sum_{k=0}^T \|g_k\|^2}\\
            &= \sqrt{n\ln\left(1 + \frac{S_T}{\delta^2}\right)S_T}.
        \end{aligned}
    \end{equation*}
    Therefore, \emph{the only difference is the dependence $\sqrt{n}$ instead of $n$ as in \ref{step:AdaDiag}}.
    In the end, we have the following key inequality:
    \begin{equation}
        \label{eq:key-inequality-adafull}
        \begin{aligned}
            \frac{S_T}{\sqrt{S_T + \delta^2}} \leq \frac{\Delta(\mathcal{C})}{\eta} + &\eta L_0 n \ln\left(1 + \frac{S_T}{\delta^2}\right) + \\
            &\eta L_1 \sqrt{n\ln\left(1 + \frac{S_T}{\delta^2}\right)S_T}.
        \end{aligned}
    \end{equation}
    The rest is fairly similar: the barriers $s_1,s_2$ are defined as:
    \begin{equation}
        \label{eq:s1-s2-construction-adafull}
        \begin{aligned}
            s_1 &= \left(\frac{8 \Delta(\mathcal{C})}{\eta}\right)^2 + \left(\frac{(32n\eta L_0)^4}{\delta^2} + \delta^2\right) + s_{\min},\\
            s_2 &= \delta^2 \left(\exp\left(\frac{1}{(4\eta L_1)^2n}\right) - 1\right).\\
        \end{aligned}
    \end{equation}
    Arguing similarly to the case of \ref{step:AdaDiag}, we get the final result.
\end{proof}

\section{Explicit estimation of $\eta^\star$ in \Cref{theorem:adagrad-l0-l1-convergence}}

In this section, we give an explicit estimation of $\eta^\star$
for \cref{theorem:adagrad-l0-l1-convergence}.
It is necessary to solve the inequalities $s_2 \geq 10s_1 + 2\frac{L_0^2}{L_1^2}$ with $s_1, s_2$ given as in \Cref{eq:s1-s2-construction-adadiag} and \Cref{eq:s1-s2-construction-adafull}.
We need the following technical lemma:

\begin{lemma}[Estimation of logarithm inequalities]
    \label{lemma:technical-estimation}
    Consider three scalars $a, b, c > 0$. If $x \geq \frac{\ln (1 + 2c + 16(b+1)^2/a^2)}{a}$, then:
    \begin{equation*}
        \exp(ax) \geq bx + c.
    \end{equation*}
\end{lemma}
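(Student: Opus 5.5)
The plan is to substitute $y = ax$, which reduces the claim to the one-variable inequality
\begin{equation*}
  e^{y} \;\geq\; \frac{b}{a}\,y + c
  \qquad\text{whenever}\qquad
  y \geq y_0 := \ln\!\Bigl(1 + 2c + \frac{16(b+1)^2}{a^2}\Bigr).
\end{equation*}
The argument of the logarithm exceeds $1$, so $y_0 > 0$ and we may take $y > 0$ throughout. I will split the right-hand side into two pieces and bound each by $\tfrac12 e^{y}$. Adding the two bounds then gives $e^y \geq \frac{b}{a}y + c$.

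\emph{Constant term.} Since $y \geq y_0$ and $\exp$ is increasing, $e^{y} \geq 1 + 2c + 16(b+1)^2/a^2 > 2c$. Hence $c \leq \tfrac12 e^{y}$.

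\emph{Linear term.} The only analytic input is the elementary bound $y\,e^{-y/2} \leq 2/e < 1$ for all $y \geq 0$. This holds because the function $y e^{-y/2}$ attains its maximum at $y = 2$. It gives $y \leq e^{y/2}$. Therefore it suffices to show $\frac{b}{a} e^{y/2} \leq \tfrac12 e^{y}$, which is equivalent to $e^{y/2} \geq 2b/a$, i.e. $e^{y} \geq 4b^2/a^2$. This follows from $e^{y} \geq 16(b+1)^2/a^2 \geq 4b^2/a^2$.

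Summing the two bounds yields $e^{y} \geq \frac{b}{a}y + c$, and substituting back $y = ax$ gives $\exp(ax) \geq bx + c$. No step is a genuine obstacle. The one point needing care is absorbing the linear term $y$ into half of the exponential without a Lambert-$W$ type inversion. The bound $y \leq e^{y/2}$ does this cleanly, and the generous constant $16(b+1)^2/a^2$ in the threshold leaves ample slack for it.
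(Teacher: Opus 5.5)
Your proof is correct and follows essentially the same route as the paper: use $e^{ax} > 2c$ to absorb $c$ into one half of the exponential, then write the other half as $e^{ax/2}\cdot e^{ax/2}$, with one factor dominating the linear variable and the other dominating $b/a$. The only difference is cosmetic. You use the sharper bound $y \le e^{y/2}$ and so need only $e^{y/2} \ge 2b/a$, whereas the paper uses $e^{ax/2} \ge ax/2$ together with $e^{ax/2} \ge 4b/a$.
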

\begin{proof}
    The inequalities can be rewritten as:
    \begin{equation*}
        \exp(ax) - c \geq bx.
    \end{equation*}
    Since $x \geq \frac{\ln(1 + 2c + 16(b+1)^2/a^2)}{a}$, we conclude that:
    \begin{equation*}
        \begin{aligned}
            \exp(ax) &\geq 2c + 1 + \frac{16(b+1)^2}{a^2}> 2c,\\
            \exp\left(\frac{ax}{2}\right) &\geq \frac{ax}{2} + 1 \geq \frac{ax}{2},\\
            \exp\left(\frac{ax}{2}\right) &\geq \sqrt{2c + 1 + \frac{16(b+1)^2}{a^2}} \geq \frac{4b}{a}.
        \end{aligned}
    \end{equation*}
    Therefore,
    \begin{equation*}
    \exp(ax) - c \geq \frac{\exp(ax)}{2} =  \frac{\exp\left(\frac{ax}{2}\right)\exp\left(\frac{ax}{2}\right)}{2} \geq \frac{4axb}{4a} = bx.
    \end{equation*}
\end{proof}
Using \Cref{lemma:technical-estimation}, we obtain the following bound on $\eta^\star$:
\begin{enumerate}[leftmargin=*]
    \item For \ref{step:AdaDiag}, let $x = \frac{1}{\eta^2}$, we need to bound the inequality $\exp(ax) \geq bx + c$ with:
    \begin{equation*}
        \begin{aligned}
            a &= \frac{1}{(4nL_1)^2},\\
            b &= \frac{640\Delta(\cC)^2}{\delta^2},\\
            c &= \frac{10(32\sqrt{n} L_0)^4}{(L_1\delta)^4} + \frac{100 B(\cC)}{\delta^2} + \frac{22L_0^2}{\delta^2L_1^2} + 45,
        \end{aligned}
    \end{equation*}
    where we use the requirement $\eta \leq \frac{\ln 2}{L_1\sqrt{n}} \leq \frac{1}{L_1\sqrt{n}}$ to remove the term $\eta^2$ in $s_1$ (cf. \Cref{eq:s1-s2-construction-adadiag}).
    Applying \Cref{lemma:technical-estimation}, we get:
    \begin{equation*}
        \frac{1}{\eta^2} \geq (4nL_1)^2 \ln (1 + 2c + 16(b+1)^2/a^2),
    \end{equation*}
    which implies that:
    \begin{equation*}
        \eta \leq \frac{1}{4nL_1\sqrt{\ln(1 + 2c + 16(b+1)^2/a^2)}}.
    \end{equation*}
    Combining with the constraint: $\eta \leq \frac{\ln 2}{L_1\sqrt{n}}$, we conclude that it is sufficient to take:
    \begin{equation*}
        \eta^\star = \frac{1}{4nL_1 \sqrt{\ln(1 + 2c + 16(b+1)^2/a^2)}},
    \end{equation*}
    since $c \geq 45$.
    \item For \ref{step:AdaFull}, we use the same machinery, which is to consider $b$ similar to that of \ref{step:AdaDiag} and:
    \begin{equation*}
        \begin{aligned}
            a &= \frac{1}{(4L_1)^2 n}\\
            c &= \frac{10(32{n} L_0)^4}{(L_1\delta)^4} + \frac{100 B(\cC)}{\delta^2} + \frac{22L_0^2}{\delta^2L_1^2} + 45, 
        \end{aligned}
    \end{equation*}
    where we use the requirement $\eta \leq \frac{\ln 2}{L_1} \leq \frac{1}{L_1}$ to remove the term $\eta^2$ in $s_1$ (cf. \Cref{eq:s1-s2-construction-adafull}).
    With the same argument, we get:
    \begin{equation*}
        \eta^\star = \frac{1}{4\sqrt{n}L_1\sqrt{\ln(1 + 2c + 16(b+1)^2/a^2)}},
    \end{equation*}
    since $c \geq 45$.
\end{enumerate}

\section{Numerical experiments on the counterexample}\label{sec:numerics}

In this section, we present additional results on the counterexample.
We first briefly detail the computational setup.
All experiments are run in Julia 1.12.6 \citep{bezanson2014julia} on a Linux laptop equipped with an AMD Ryzen 7 7730U CPU and 16GB of RAM.
Crucial to the fact that this example does not exhibit sequential convergence,
\cref{fig:gradnorm} displays the sequence of gradient norms.
We highlight in particular its non-summability -- $\sum_k \norm{g_k}$ is asymptotically larger than  $\sum_{k} 1/k$, a necessary condition for non-convergence of iterates.
We also display the sequence of function values in \cref{fig:funcval},
highlighting that unlike the $(L_0,L_1)$ counterexample from \citet{bohan2023ada}, the method still converges in function value,
since the function is L-smooth for some constant $L$.

\begin{figure}
    \centering
    \includegraphics[width=0.9\linewidth]{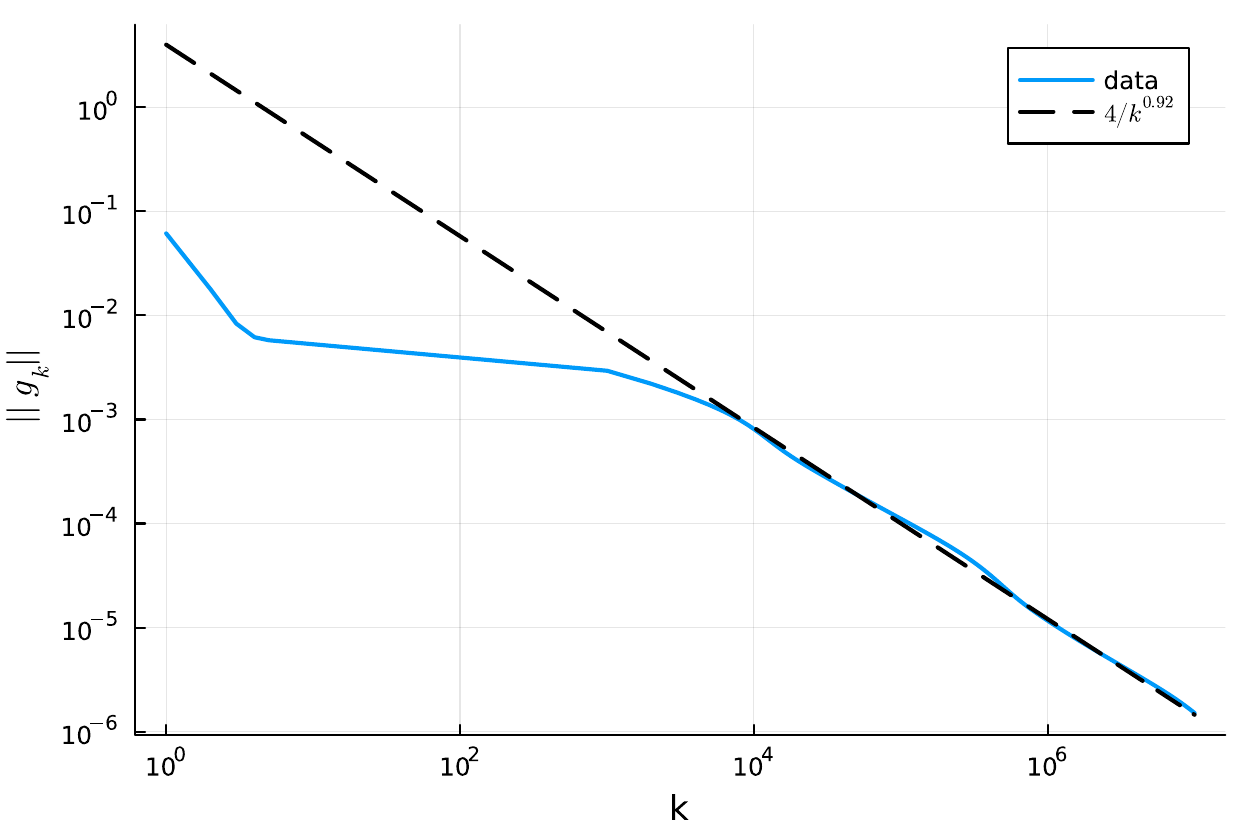}
    \caption{Sequence of gradient norms produced by AdaGrad on the example.}
    \label{fig:gradnorm}
\end{figure}

\begin{figure}
    \centering
    \includegraphics[width=0.9\linewidth]{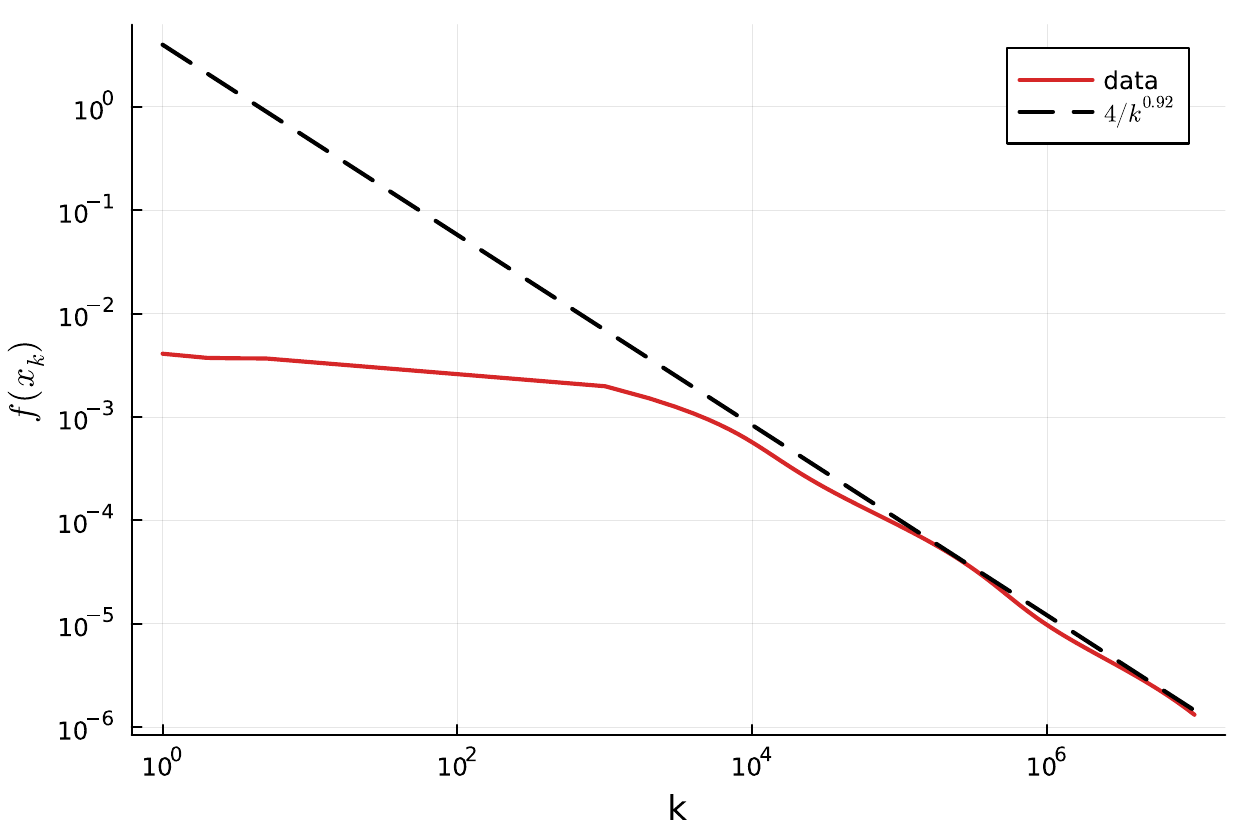}
    \caption{Sequence of function values produced by AdaGrad on the example.}
    \label{fig:funcval}
\end{figure}

\begin{figure}
    \centering
    \includegraphics[width=0.9\linewidth]{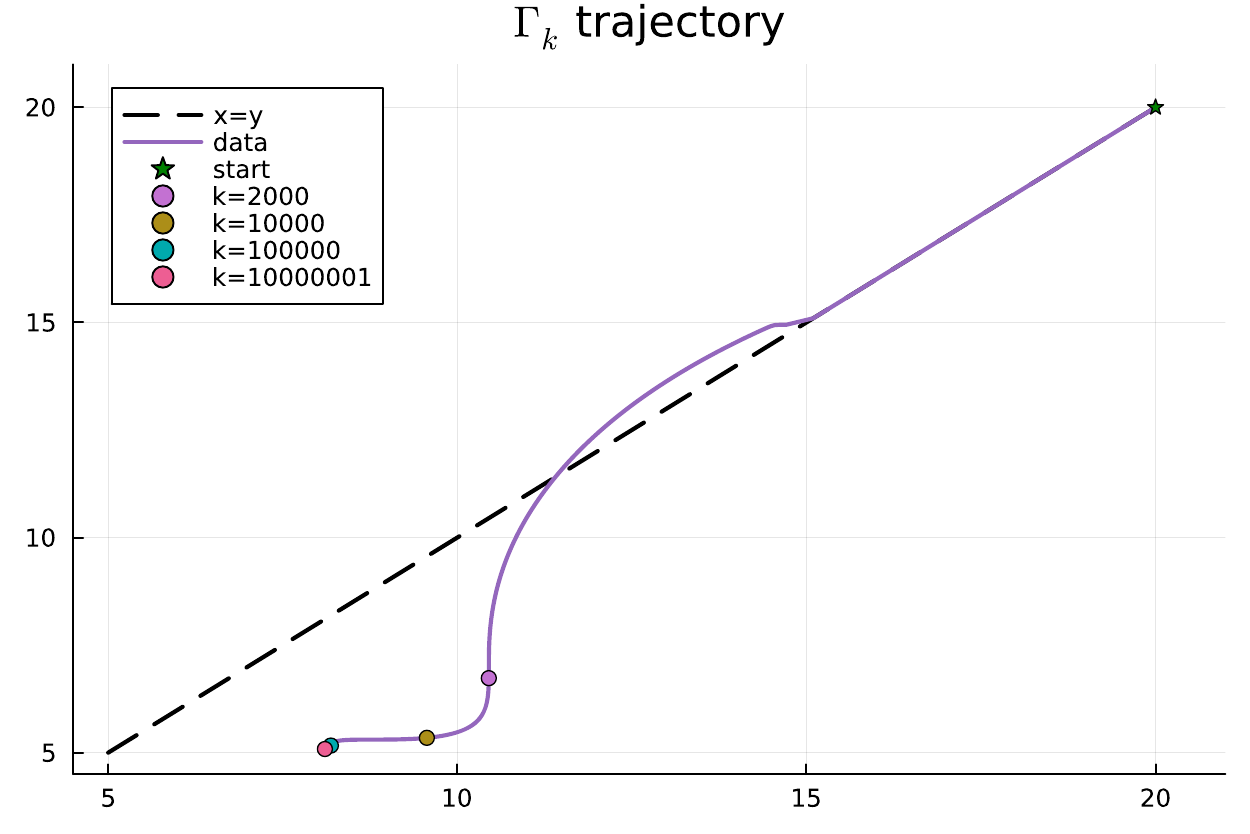}
    \caption{Trajectory of the preconditioning matrix $\Gamma_k$.}
    \label{fig:gammatraj}
\end{figure}

\section{Spiral behavior of AdaGrad}

In this section, we show that running AdaGrad algorithms on a variant of the Mexican hat function might result in a spiral behavior: the iterates themselves are bounded, with monotonically decreasing function values, but their set of accumulation points is not a singleton.
We point out that the function is slightly different from the one used in the numerical experiments: we changed the sign from $+$ to $-$ in front of the $\cos$ function and $\omega = 1$. This is an adjustment that we made after the main text submission since it makes the equations in the subsequent proof become much cleaner. The proof for the function in the main text is identical.  

\subsection{Definition of the ``pathological'' function}

We first expose the function construction from the counter-example in more details.
Consider the following function $\tilde{f}: \RR^2 \setminus B\left(0,\frac{1}{2}\right) \to \RR$, given in polar coordinates as:

\begin{equation}
    \label{eq:mexican-hat-function-variant}
    \tilde{f}(r,\theta) = \begin{cases}
        (1 - r)^{a+1} + (1 - r)^{b} (1 - \cos(\theta + \ln (1 - r))), &\text{if } 1/2 \leq r < 1\\
        0, &\text{otherwise}
    \end{cases} 
\end{equation}
where the two integers $a, b \in \NN$ satisfy: $b \geq 3, a > b - 1$. This is a variant of the Mexican hat function from \citep{absil2005convergence}.

To ease the presentation, we will also work with the variable $s = 1 - r$ rather than $r$, which gives the function:
\begin{equation}
    \label{eq:mexican-hat-function-variant-variable-s}
    f(s,\theta) = \begin{cases}
        s^{a+1} + s^{b} (1 - \cos(\theta + \ln s)), &\text{if } 1/2 \leq r < 1\,(\text{or equivalently } 0 < s \leq 1/2),\\
        0, &\text{otherwise}
    \end{cases} 
\end{equation}

Note that we have to exclude a ball around the origin because, under the current form, the function $\tilde{f}$ is not well-defined at $(0,0)$.
Nevertheless, in the following, we will prove that $\tilde{f}$ is $C^2$ on $\RR^2 \setminus B\left(0, \frac{1}{2}\right)$.
This allows us to use the Whitney extension theorem \citep{whitney1934analytic} to extend $\tilde{f}$ to a $C^2$ function $f:\RR^2 \to \RR$ that agrees with $\tilde{f}$ on $\RR^2 \setminus B\left(0, \frac{1}{2}\right)$.

We first provide a property of the function \eqref{eq:mexican-hat-function-variant}.
\begin{lemma}[Smoothness of $f$]
    \label{lemma:property-of-f}
    The function $\tilde{f}$ is $C^2$ on $\RR^2 \setminus B\left(0,\frac{1}{2}\right)$.
\end{lemma}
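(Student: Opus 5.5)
The plan is to reduce everything to the behavior of $\tilde f$ near the unit circle $r=1$, since this is the only place where smoothness is in doubt. First I check the two open pieces. On the annulus $\{1/2 \le r < 1\}$, the maps $(x,y)\mapsto r=\sqrt{x^2+y^2}$ and $(x,y)\mapsto s=1-r$ are $C^\infty$ because we stay away from the origin. The angle $\theta=\mathrm{atan2}(y,x)$ is only locally defined, but it enters $\tilde f$ only through $\cos(\theta+\ln s)$, which is $2\pi$-periodic in $\theta$. Hence $\tilde f$ is locally a composition of smooth maps there and is $C^\infty$. On $\{r>1\}$, $\tilde f\equiv 0$. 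So it remains to show $C^2$ regularity across $\{r=1\}$.

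Second, I work in the coordinates $(s,\theta)$ with $0<s\le 1/2$. I compute all partial derivatives of $f(s,\theta)=s^{a+1}+s^b\bigl(1-\cos(\theta+\ln s)\bigr)$ up to order two. The polynomial part contributes at worst $(a+1)a\,s^{a-1}$. For the oscillating part, each $\partial_s$ lowers the power of $s$ by one. This holds in particular when it hits $\ln s$, producing a factor $1/s$ multiplying a bounded trigonometric term. Each $\partial_\theta$ leaves the power unchanged. Therefore
\begin{equation*}
|\partial^\alpha f(s,\theta)| \le C\bigl(s^{a+1-|\alpha|}+s^{b-|\alpha|}\bigr), \qquad |\alpha|\le 2.
\end{equation*}
The hypotheses $b\ge 3$ and $a>b-1\ge 2$ then give that $f$ and all its $(s,\theta)$-derivatives of order $\le 2$ tend to $0$ as $s\to 0^+$, uniformly in $\theta$.

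Third, I transfer this to Cartesian coordinates by the chain rule. Any first or second Cartesian partial derivative of $\tilde f$ is a linear combination of $\partial_s f$, $\partial_\theta f$, $\partial_s^2 f$, $\partial_s\partial_\theta f$ and $\partial_\theta^2 f$. The coefficients are first and second derivatives of $r$ and $\theta$ with respect to $(x,y)$. These coefficients are smooth and bounded on $\{1/2\le r\le 2\}$, since $|\nabla r|=1$, $|\nabla\theta|=1/r$ and the second derivatives are $O(1/r)$ and $O(1/r^2)$. Consequently $\tilde f$ and its Cartesian derivatives of order $\le 2$ tend to $0$ as $r\to1^-$. This matches the values of the zero function on $r\ge1$.

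Finally, I conclude with the standard gluing argument. Here $\tilde f$ is continuous, and $C^2$ on both sides of the smooth curve $\{r=1\}$. Its derivatives up to order two extend continuously to the curve from each side, with equal limits (namely $0$). To upgrade this to genuine differentiability on the curve, I integrate along segments crossing it (fundamental theorem of calculus). This shows that $\nabla\tilde f$ exists at points with $r=1$ and equals $0$ there, and is continuous. Applying the same argument to each component of $\nabla\tilde f$ gives the same for the Hessian. At the inner boundary $r=1/2$, the function is smooth up to the boundary, which is all that is needed there.

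The main obstacle is the second step: showing that the $\ln s$ phase does not destroy the decay. Each derivative through $\ln s$ costs a factor $1/s$, so one needs enough powers of $s$ to absorb two such losses. This is exactly where $b\ge3$ (and $a\ge 3$) is used.
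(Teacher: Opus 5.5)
Your proposal is correct and takes essentially the paper's route. Like the paper, you compute the polar derivatives of $f$ up to order two, observe that they all vanish as $r\to1^-$ because $b\ge3$ and $a>b-1$, and pass to Cartesian coordinates through the chain rule. Your gluing step across $\{r=1\}$, which integrates along segments (fundamental theorem of calculus), makes explicit what the paper leaves implicit when it concludes $C^1$ and $C^2$ directly from the vanishing limits.
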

\begin{proof}
    Denote $x = r\cos(\theta), y = r\sin(\theta)$. We define two vectors:
    \begin{equation*}
        \hat{r} = \begin{pmatrix}
            \cos\theta \\ \sin\theta
        \end{pmatrix},\qquad 
        \hat{\theta} = \begin{pmatrix}
            -\sin\theta \\ \cos\theta
        \end{pmatrix}
    \end{equation*}
    The gradient of $\tilde{f}$ (or $f$) in Cartesian coordinates outside of the unit disk is $0$, and that in $B\left(0,1\right) \setminus B\left(0, \frac{1}{2}\right)$ are given by:
    \begin{equation*}
        \begin{aligned}
            \nabla f(x) = 
                (\nabla_r f) \hat{r} + (\nabla_\theta f) \frac{1}{r} \hat{\theta},
        \end{aligned}
    \end{equation*}
    where
    \begin{equation}
        \label{eq:gradient-f-polar}
        \begin{aligned}
            \nabla_r f &= -(a+1)(1 - r)^{a} - b(1-r)^{b-1}(1 - \cos(\theta + \ln(1 - r))) - (1 - r)^{b - 1}\sin(\theta + \ln(1 - r)),\\
            \nabla_\theta f &= (1 - r)^b \sin(\theta + \ln(1-r)).
        \end{aligned}
    \end{equation}
    Since both $\nabla_r f$ and $\nabla_\theta f$ tend to zero when $r \to 1$ (since $b \geq 3$ and $a > b - 1$), we conclude that $f$ is $C^1$.
    
    Similarly, we compute the Hessian of $\tilde{f}$ in Cartesian coordinates.
    We focus on $x \in B(0,1) \setminus B\left(0, \frac{1}{2}\right)$ since otherwise, the Hessian is simply $0$. It is given by:
    \begin{equation*}
        \nabla^2 f(x) = \begin{pmatrix}
            \hat{r} & \hat{\theta}
        \end{pmatrix} \begin{pmatrix}
            \nabla_{rr} f & \frac{1}{r} \nabla_{r\theta} f - \frac{1}{r^2} \nabla_\theta f\\
            \frac{1}{r} \nabla_{r\theta} f - \frac{1}{r^2} \nabla_\theta f & \frac{1}{r} \nabla_r f + \frac{1}{r^2}\nabla_{\theta\theta} f
        \end{pmatrix}\begin{pmatrix}
            \hat{r} & \hat{\theta}
        \end{pmatrix}^\top,\forall x \in B(0,1) \setminus B\left(0, \frac{1}{2}\right),
    \end{equation*}
    where:
    \begin{equation*}
        \begin{aligned}
            \nabla_{rr} f = \nabla_{rr}^2 f &= a(a+1)(1-r)^{a - 1} + b(b-1)(1-r)^{b - 2}(1 - \cos(\theta + \ln(1-r)))\\ 
            &+ b(1-r)^{b-2}\sin(\theta + \ln(1-r)) + (b-1)(1-r)^{b-2}\sin(\theta + \ln(1 - r))\\
            &+ (1-r)^{b-2}\cos(\theta + \ln(1-r)),\\
            \nabla_{\theta r} f = \nabla_{r\theta} f &= -b(1-r)^{b-1}\sin(\theta + \ln(1-r)) - (1-r)^{b-1}\cos(\theta + \ln(1-r)),\\
            \nabla_{\theta\theta} f &= (1-r)^b \cos(\theta + \ln(1-r)).
        \end{aligned}
    \end{equation*}
    One can verify that with $b \geq 3, a > b - 1$, $\nabla_{rr} f, \nabla_{\theta r} f, \nabla_{\theta\theta} f$ tend to zero when $r \to 1$, hence $\nabla^2 f$. Thus, the function $\tilde{f}$ is $C^2$. 
\end{proof}

A direct corollary of \Cref{lemma:property-of-f} is:
\begin{corollary}[L-smoothness extension]\label{cor:L-smoothness}
    There exists an $L$-smooth function $f: \RR^2 \to \RR$ that agrees with $\tilde{f}$ on its domain, i.e., $\RR^2 \setminus B\left(0,\frac{1}{2}\right)$. 
\end{corollary}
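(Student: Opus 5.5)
The plan is to build the extension by hand rather than rely on a full Whitney extension theorem. Since \Cref{lemma:property-of-f} already gives that $\tilde{f}$ is $C^2$ on the closed set $\RR^2 \setminus B\left(0,\frac12\right)$, only two things remain: a $C^2$ extension to all of $\RR^2$, and a global bound on its Hessian. The second is where the work lies.

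\textbf{Step 1: a globally $C^2$ function agreeing with $\tilde f$ on the annulus $\frac12 \le r \le 1$.} In the variable $s = 1-r$, the formula \eqref{eq:mexican-hat-function-variant-variable-s} is $s^{a+1} + s^b\bigl(1-\cos(\theta+\ln s)\bigr)$. This expression is smooth in $(s,\theta)$ for all $s>0$, and in particular on $0 < s < 1$, that is, for $0 < r < 1$. So I would keep exactly this formula on $\frac14 \le r < 1$, which is already $C^2$ there and equal to $\tilde f$ on $[\frac12,1)$.

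Next I pick a smooth radial cutoff $\chi(r)$ with $\chi = 1$ for $r \ge \frac12$ and $\chi = 0$ for $r \le \frac14$, and define
\[
f := \chi\,\tilde f_{\mathrm{formula}} \ \text{ for } r<1, \qquad f := 0 \ \text{ for } r \ge 1 .
\]
This $f$ vanishes near the origin, so the polar singularity at $0$ disappears. On $r<1$ it is $C^2$ as a product of $C^2$ functions. Across the circle $r=1$ it is $C^2$ by exactly the limit computation already done in \Cref{lemma:property-of-f}. On $r \ge \frac12$ it coincides with $\tilde f$. As a fallback one could also cite the Whitney extension theorem, but the explicit cutoff avoids any question about what the extension preserves.

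\textbf{Step 2: $L$-smoothness, via a uniform bound on $\|\nabla^2 f\|$ over all of $\RR^2$.} I split the plane into three regions.
\begin{itemize}
\item On $r \ge 1$ the Hessian is $0$.
\item On the compact annulus $\frac14 \le r \le \frac12$, $f$ is $C^2$, so its Hessian is continuous and hence bounded there.
\item On $\frac12 \le r < 1$, I use the explicit polar formulas from \Cref{lemma:property-of-f}. Each entry is a sum of terms $(1-r)^{m}\times(\text{bounded trigonometric factor})$ with $m \ge \min(a-1,\,b-2) \ge 1$, multiplied by factors $1/r$ or $1/r^2$, which are at most $4$. So every entry is bounded on this region, and the rotation by $(\hat r,\hat\theta)$ is orthogonal and does not change the norm.
\end{itemize}
Hence $\sup_{x}\|\nabla^2 f(x)\| =: L < \infty$. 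Integrating the Hessian along segments then gives $\|\nabla f(x)-\nabla f(y)\| \le L\|x-y\|$ for all $x,y$, so $f$ is $L$-smooth.

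\textbf{Main obstacle.} The delicate point is the behaviour as $r \to 1$, where $\ln(1-r)$ blows up. Each derivative of $\cos(\theta+\ln s)$ produces a factor $1/s$, so one must check that the powers $s^{b-2}$ and $s^{a-1}$ still dominate after two derivatives. This is exactly what the assumptions $b \ge 3$ and $a > b-1$ guarantee, and it is already contained in \Cref{lemma:property-of-f}. Beyond that, the proof only needs to keep the cutoff region away from both the origin and $r=1$.
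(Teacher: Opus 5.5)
Your proof is correct, but it reaches the extension by a different route than the paper. The paper invokes the Whitney extension theorem on the closed set $\RR^2\setminus B(0,\frac12)$, using \Cref{lemma:property-of-f}. It then bounds the Hessian using only the fact that $\nabla^2 f = 0$ for $\|x\|>1$, implicitly combined with continuity of $\nabla^2 f$ on the compact closed unit disk.

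You instead observe that the defining formula $s^{a+1}+s^b(1-\cos(\theta+\ln s))$ is already smooth on the whole punctured disk $0<r<1$. So the only genuine obstruction is the polar singularity at the origin, which your radial cutoff vanishing on $r\le\frac14$ removes. The gluing at $r=1$ then rests on the same vanishing-limit computation the paper uses, together with the standard fact that a continuous function that is $C^2$ off a circle, with derivatives extending continuously across it, is $C^2$.

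Your approach gives a self-contained, explicit extension: identically zero near the origin and $C^\infty$ inside the unit disk. It does not need Whitney's jet-compatibility conditions on a closed set, which hold here only because the boundary circle $r=\frac12$ is smooth. It also fits the paper's own caution about extension theorems. The paper's version is shorter and uses nothing about the formula below $r=\frac12$.

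Two small remarks. First, your three-region split omits $r<\frac14$, where trivially $\nabla^2 f=0$. Second, the region-by-region Hessian estimate is more than you need: once $f\in C^2(\RR^2)$ has zero Hessian outside the closed unit disk, continuity on that compact set already gives $L<\infty$, exactly as in the paper.
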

\begin{proof}
    Via the Whitney extension theorem \citep{whitney1934analytic} and \Cref{lemma:property-of-f}, there exists a $C^2$ function $f$ agreeing with $\tilde{f}$ on its domain. Moreover, the Hessian is bounded since $\nabla^2 f (x) = 0, \forall \|x\| > 1$. Thus, the function is also $L$-smooth.
\end{proof}

From now on, we will refer to $f$ as the function defined in \eqref{eq:mexican-hat-function-variant} (and also \eqref{eq:mexican-hat-function-variant-variable-s}). Essentially, we do not care what the extension $f$ looks like around the origin since with a well-chosen initialization and step-size, the iterates of AdaGrad never reach $B\left(0, \frac{1}{2}\right)$. Hence, the dynamics of the AdaGrad algorithms is only governed by $\tilde{f}$ (or simply $f$).

\subsection{Formal statement on the spiral behavior of AdaGrad algorithms}

\begin{theorem}[Spiral behavior of AdaGrad algorithms]
    \label{theorem:sprial-behavior}
    Consider the $L$-smooth function $f$ given as in \Cref{cor:L-smoothness}. There exists an initialization $x_0$, a step size $\eta$ and a constant $\delta > 0$ such that the sequence of polar coordinates $\{(r_k, \theta_k)\}_{k\in\NN}$ corresponding to that of the iterates $\{x_k\}_{k \in \NN}$ satisfies:
    \begin{equation*}
        \begin{aligned}
            \forall k \in \NN, r_k &< 1, \quad \text{ or equivalently } \quad s_k = 1 - r_k > 0,\\
            \lim_{k \to \infty} r_k &= 1, \quad \text{ or equivalently } \quad \lim_{k \to \infty} s_k = 0,\\
            \lim_{k \to \infty} \theta_k + \ln(1 - r_k) &= 0,\\
            \lim_{k \to \infty} |\theta_k - \theta_{k+1}| &= 0.
        \end{aligned}
    \end{equation*}
\end{theorem}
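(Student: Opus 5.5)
I would analyse the dynamics in the rotated coordinates $(s,\phi)$ with $\phi := \theta + \ln s$. On the region $0 < s \le 1/2$ the gradient is given by \eqref{eq:gradient-f-polar}. Writing $-\nabla_r f$ as the derivative in $s$, we have
\begin{equation*}
\partial_s f = (a+1)s^{a} + b s^{b-1}(1-\cos\phi) + s^{b-1}\sin\phi, \qquad \partial_\theta f = s^b\sin\phi .
\end{equation*}
Since $a>b-1$, the term $(a+1)s^a$ is negligible against $s^{b-1}$ as $s \to 0$. Near the curve $\phi = 0$ the radial component is therefore approximately $s^{b-1}\sin\phi$ and the angular one is approximately $s^{b-1}\sin\phi$ as well, after dividing by $r\approx 1$. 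I would work with \ref{step:AdaNorm}, where the step is $x_{k+1} = x_k - \eta_k g_k$ with the scalar $\eta_k = \eta/\sqrt{\delta^2+S_k}$. The plan is to show that the set $\{|\phi| \le c\, s^{\gamma}\}$ is forward invariant for suitable constants (a trapping region around the spiral $\theta = -\ln s$), and that inside it $s_k$ decreases to $0$.

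\textbf{Step 1: one-step expansion.} I would Taylor-expand one AdaGrad step in polar coordinates. The step has length $\eta_k\|g_k\| = O(\eta_k s_k^{b-1})$, which is tiny compared with $s_k$ once $b \ge 3$ and $s$ is small. To leading order this gives
\begin{equation*}
s_{k+1} = s_k - \eta_k\,\partial_s f + O(\eta_k^2 s_k^{2b-2}), \qquad \theta_{k+1} = \theta_k - \eta_k\,\partial_\theta f\,(1+O(s_k)) + O(\eta_k^2 s_k^{2b-2}).
\end{equation*}
Consequently $\phi_{k+1} = \phi_k - \eta_k\partial_\theta f - \eta_k\partial_s f/s_k + \text{h.o.t.}$, and the dominant contribution is $-\eta_k s_k^{b-2}\sin\phi_k$. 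This is a contraction of $\phi$ towards $0$ with rate $\eta_k s_k^{b-2}$, which is less than $1$ for small $s$. The forcing term is $-\eta_k(a+1)s_k^{a-1}$, which is $o(\eta_k s_k^{b-2})$ because $a>b-1$. A discrete Lyapunov argument on $|\phi_k|$ should then give $|\phi_{k+1}| \le (1-c_1\eta_k s_k^{b-2})|\phi_k| + C\eta_k s_k^{a-1}$. This keeps $|\phi_k| \lesssim s_k^{a-b+1}$, so $\phi_k\to 0$ provided $s_k \to 0$.

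\textbf{Step 2: $s_k$ decreases to $0$.} Inside the trapping region, $\partial_s f \ge (a+1)s^a - s^{b-1}|\phi| + O(s^{b-1}\phi^2)$. Using $|\phi|\lesssim s^{a-b+1}$ with a small constant, the right-hand side stays positive, of order $s^a$. Hence $s_k$ is strictly decreasing and positive: the step is much smaller than $s_k$, so the iterates never leave $\{0<s\le 1/2\}$ and, in particular, never reach $B(0,1/2)$. For the limit, I note that $S_k$ is bounded by $S_0$ plus a convergent series, since the gradients are of order $s_k^{b-1}$ and $s_k$ decreases. Thus $\eta_k \ge \underline\eta>0$, and $s_{k+1}\le s_k - \underline\eta c s_k^a$ forces $s_k\to 0$. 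If summability of the gradient norms is unclear, I would use the alternative argument: $f$ is nonincreasing along the iterates because the steps are small, so $f(x_k)$ converges, which together with the recursion gives $s_k\to0$.

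\textbf{Step 3: conclusion, and the main obstacle.} Once $s_k\to0$ and $\phi_k\to0$, the third claim is immediate. The fourth follows because $|\theta_{k+1}-\theta_k| = O(\eta_k s_k^{b})\to 0$. Since $\theta_k = \phi_k - \ln s_k \to +\infty$, the iterates spiral and the limit set is the whole unit circle. Initialization follows the experiment: take $s_0$ small, $\theta_0 = -\ln s_0$ (so $\phi_0 = 0$), and $\delta,\eta$ such that $\eta/\delta$ is small relative to $s_0$. The main obstacle is Step 1, specifically the uniform control of the higher-order terms in the polar expansion and of the interplay between the shrinking rate $\eta_k$ and the contraction rate. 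I would first try to establish the invariant-region inequality by induction on $k$ with explicit constants valid for all $s\le s_0$. Bounding $\eta_k$ between $\eta/\sqrt{\delta^2+S_\infty}$ and $\eta/\delta$ should keep all constants uniform.
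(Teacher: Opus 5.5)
\textbf{The gap is in Step 2.} Your bound $\partial_s f\ge(a+1)s^a-s^{b-1}|\phi|+O(s^{b-1}\phi^2)\gtrsim s^a$ needs $|\phi|\le c\,s^{a-b+1}$ with $c<a+1$. You do not get to choose that constant: it is set by your own Step 1 recursion $\phi_{k+1}\approx\phi_k-\eta_k s_k^{b-2}\sin\phi_k-\eta_k(a+1)s_k^{a-1}$.

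\begin{itemize}
\item The quasi-equilibrium of this recursion is $\sin\phi\approx-(a+1)s^{a-b+1}$. This is exactly where $(a+1)s^a+s^{b-1}\sin\phi$ vanishes.
\item Your Lyapunov inequality can only make $|\phi|\le Ks^{a-b+1}$ invariant for $K$ at least about $a+1$.
\item Starting from $\phi_0=0$, the iterates do relax onto this value. Relaxation takes about $1/(\eta_k s_k^{b-2})$ steps, during which $s_k$ barely moves.
\end{itemize}

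In fact Steps 1 and 2 cannot both hold. Inside your region, $|\theta_{k+1}-\theta_k|=O(\eta_k s_k^{b}|\phi_k|)=O(\eta_k s_k^{a+1})$. So $s_{k+1}\le s_k-\underline\eta\, c\, s_k^a$ would give $\phi_{k+1}-\phi_k\le-\tfrac{c}{2}\underline\eta\, s_k^{a-1}$ for small $s_k$. On the other hand, $s_k-s_{k+1}\le\eta_k\|g_k\|=O(\eta_k s_k^a)$ gives $\sum_{k<N}s_k^{a-1}\gtrsim\ln(s_0/s_N)\to\infty$. Hence $\phi_k\to-\infty$, contradicting Step 1.

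What actually happens is that the valley $\theta=-\ln s$ is followed almost tangentially, with $s_k-s_{k+1}\approx\eta_k(a+1)s_k^{a+2}$. Consequences for the rest of your argument:
\begin{itemize}
\item Your derivations of $s_k\downarrow 0$ and of the iterates staying in $\{0<s\le 1/2\}$ fall through.
\item The fallback (monotone $f(x_k)$ converges) does not help. Convergence of $f(x_k)$ alone does not give $f(x_k)\to 0$, and "the recursion" you pair it with is the false one.
\item Your justification of $\sup_k S_k<\infty$ is also insufficient. $\|g_k\|=O(s_k^{b-1})$ with $s_k$ decreasing does not make $\sum_k s_k^{2b-2}$ finite: with $s_k\sim k^{-1/(a+1)}$ it diverges when $a+1\ge 2b-2$, e.g.\ $a=5$, $b=3$.
\end{itemize}

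\textbf{How the paper avoids this.} It never needs a rate for $s_k$.
\begin{itemize}
\item With $\eta=1$ and $\delta\ge L$, the descent inequality $f(x_{k+1})\le f(x_k)-\tfrac12 g_k^\top\Gamma_k g_k$ holds for all three variants. It gives $\eta S_T/\sqrt{S_T+\delta^2}\le f(x_0)-f^\star$, hence summability of $\|g_k\|^2$, a uniform sandwich $\lambda_- I\preceq\eta\Gamma_k\preceq\lambda_+ I$, and monotonicity of $f(x_k)$.
\item Since $\beta_0=\phi_0=0$, we have $f(x_0)=s_0^{a+1}=\min_{\|x\|=r_0}f$, and $f\ge s^{a+1}$ on every circle. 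So $s_k\le s_0$; small steps rule out jumping into $B(0,1/2)$.
\item $s_{k+1}\ge s_k/2$ because $\|\Gamma_k g_k\|=O(s_k^2)$.
\item $s_k\to 0$ because every accumulation point is critical, and $f$ has no critical point with $1/2\le r<1$: $\partial_\theta f=0$ forces $\sin\phi=0$, and then $\partial_s f=(a+1)s^a+bs^{b-1}(1-\cos\phi)>0$.
\item Only after this is the angle controlled, by induction on $|\beta_k|\le C s_k^{a-b+1}$ with $C$ \emph{large}. That is all $\beta_k\to 0$ requires.
\end{itemize}

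\textbf{Repairs to the rest of your proposal.} If you replace your Step 2 by this argument, Steps 1 and 3 go through with a large constant, after three fixes:
\begin{itemize}
\item Sharpen the error term. Your $O(\eta_k^2 s_k^{2b-2})$ in the $s$-update becomes $O(\eta_k^2 s_k^{2b-3})$ in the $\phi$-recursion. That is not $o(\eta_k s_k^{a-1})$ when $a>2b-2$. Use $O(\eta_k^2(\partial_\theta f)^2)$, or $\|g_k\|=O(s_k^a)$ in the trapped region, instead.
\item Correct your opening claim about the angular component of the gradient: it is $s^b\sin\phi/r$, not $s^{b-1}\sin\phi$.
\item You only treat AdaNorm. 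The paper's bookkeeping through $\Lambda_k=s_k^b\langle\zeta_k,\Gamma_k\zeta_k\rangle$ and $P_k\langle\zeta_k,\Gamma_k\hat r_k\rangle$ also covers non-scalar preconditioners, where radial and angular components mix.
\end{itemize}
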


The last equality implies that $\theta_k \approx -\ln (1 - r_k)$. The angle $\theta_k$ in our analysis is thus a continuous variable taking values in $\RR$: it is an accumulated continuous angle along the whole sequence.
More precisely, we choose a fixed value of $\theta_0 \in \RR$ such that $x_0 = (r_0 \cos(\theta_0), r_0 \sin(\theta_0))$. We then define $\theta_k$ recursively by
\begin{equation*}
    \theta_{k+1} = \theta_k + \Delta_k,
\end{equation*}
where $\Delta_k \in (-\pi, \pi]$ represents the directed angle shift from $x_k$ to $x_{k+1}$. Specifically, $\Delta_k$ is positive if the transition from $x_k$ to $x_{k+1}$ is counterclockwise, and negative if it is clockwise.

Before proving the results, we explain why these fourth elements explain the spiraling behavior: 
\begin{enumerate}[leftmargin=*]
    \item The first two equations: $r_k < 1, \forall k \in \NN$ and $\lim_{k \to \infty} r_k = 1$ imply that the iterates eventually approach the unit circle but never escape the unit disk.
    \item The last equation implies that $\theta_k$ grows infinitely because $- \ln(1 - r_k) \to \infty$ (since $\lim_{k \to \infty} r_k = 1$). But this is not enough to show the spiraling behavior (e.g., take $\theta_k = 2\pi k$). Therefore, we need the last limit: $\lim_{k \to \infty} |\theta_k - \theta_{k + 1}| = 0$ to avoid these possible ``jumps'' that allow the iterates to converge and ensure that the iteration trajectory spirals, and hence, converge to a non-singleton accumulation set.  
\end{enumerate}

To prove \Cref{theorem:sprial-behavior}, we use the generic template of AdaGrad given by:
\begin{equation}
    \label{eq:abstract-adagrad}
    x_{k + 1} = x_{k} - \eta \Gamma_k \nabla f(x_k).
\end{equation}
where $\Gamma_k$ is the preconditioning matrix defined by the formulation of three variants of the AdaGrad algorithms.
We want to control $\eta \Gamma_k$, which is performed through the following lemma: 
\begin{lemma}[Choice of step-size $\eta$ and the constant $\delta$]
    \label{lemma:step-size-choice}
    If we choose $\eta, \delta > 0$:
    \begin{equation*}
        \begin{aligned}
            \frac{\eta}{\delta} \leq \frac{1}{L},    
        \end{aligned} 
    \end{equation*}
    where $L$ is the smoothness constant of the function $f$ (given in \Cref{cor:L-smoothness}), then we have:
    \begin{equation*}
        \frac{\eta}{\delta + (f(x_0) - f^\star) / \eta} I\preceq \eta \Gamma_k \preceq \frac{1}{L}I,
    \end{equation*}
    where $f^\star = \inf_{x \in \RR^2} f(x) > -\infty$. In particular, with $\eta = 1$ and $\delta \geq L$, we have:
    \begin{equation*}
        \frac{1}{\delta + f(x_0) - f^\star} I \preceq \eta \Gamma_k \preceq \frac{1}{\delta} I.
    \end{equation*}
\end{lemma}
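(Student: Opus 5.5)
The plan is to prove the two inequalities separately. The upper bound is purely algebraic. The lower bound needs a uniform bound on $S_k$, which I will get from a descent-lemma telescoping argument of the kind used in the proof of \Cref{theorem:unified-framework}.

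\emph{Upper bound.} For all three variants, $G_k \succeq \delta^2 I$: each added term $\|g_s\|^2 I$, $\mathrm{diag}(g_sg_s^\top)$ or $g_sg_s^\top$ is positive semidefinite. Since $t\mapsto t^{-1/2}$ is operator monotone decreasing, $\Gamma_k = G_k^{-1/2} \preceq \delta^{-1} I$. Hence $\eta\Gamma_k \preceq \frac{\eta}{\delta} I \preceq \frac1L I$ by the hypothesis $\eta/\delta\le 1/L$.

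\emph{Descent.} Let $f$ be $L$-smooth by \Cref{cor:L-smoothness}. The standard descent lemma applied to $x_{k+1}=x_k-\eta\Gamma_k g_k$ gives
\[
f(x_{k+1}) \le f(x_k) - \eta\, g_k^\top\Gamma_k g_k + \frac{L\eta^2}{2} g_k^\top \Gamma_k^2 g_k .
\]
Since $\Gamma_k$ commutes with itself and $\eta\Gamma_k\preceq \frac1L I$, we have $L\eta^2\Gamma_k^2 = (L\eta\Gamma_k^{1/2})\,\Gamma_k\,(\eta\Gamma_k^{1/2})\cdot\frac{1}{\eta} \preceq \eta \Gamma_k$, more simply $L\eta^2\Gamma_k^2\preceq\eta\Gamma_k$ by functional calculus. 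Therefore
\[
f(x_{k+1})\le f(x_k)-\frac{\eta}{2}\, g_k^\top\Gamma_k g_k ,
\]
which gives monotone decrease of function values. Telescoping yields $\frac{\eta}{2}\sum_{k\le T} g_k^\top\Gamma_kg_k\le f(x_0)-f^\star$.

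\emph{Bounding $S_T$.} As in \eqref{eq:upper-bound-G-k}, $\|G_k\|\le \delta^2+S_k$, so $g_k^\top\Gamma_kg_k\ge \|g_k\|^2/\sqrt{\delta^2+S_k}$. Put $a_k=\|g_k\|^2=S_k-S_{k-1}$ with $S_{-1}=0$. Then
\[
\sqrt{\delta^2+S_k}-\sqrt{\delta^2+S_{k-1}}=\frac{a_k}{\sqrt{\delta^2+S_k}+\sqrt{\delta^2+S_{k-1}}}\le \frac{a_k}{\sqrt{\delta^2+S_k}} .
\]
Summing, $\sqrt{\delta^2+S_T}-\delta\le \sum_k a_k/\sqrt{\delta^2+S_k}\le 2(f(x_0)-f^\star)/\eta$. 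Hence $\|G_k\|\le\delta^2+S_k\le (\delta+c\,(f(x_0)-f^\star)/\eta)^2$ for all $k$, and so $\eta\Gamma_k\succeq \frac{\eta}{\delta+c(f(x_0)-f^\star)/\eta}I$. The special case $\eta=1$, $\delta\ge L$ follows by substitution.

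The main obstacle is the constant $c$. The argument above gives $c=2$, while the statement has $c=1$. To remove the factor $2$, I would first try to exploit the smaller quadratic term. For instance, use $g_k^\top\Gamma_k^2g_k\le g_k^\top\Gamma_kg_k/\sqrt{\delta^2+S_{k-1}}$ together with $\eta/\delta\le 1/L$, which should make the loss factor tend to $1$ as $S_k$ grows. Alternatively, I would sharpen the telescoping inequality using $\sqrt{\delta^2+S_{k-1}}\le\sqrt{\delta^2+S_k}$ more carefully. If neither works, the statement should be read with $2(f(x_0)-f^\star)/\eta$; this changes nothing qualitatively in the later spiral argument, which only needs $\eta\Gamma_k$ to be uniformly bounded above and below by positive multiples of $I$.
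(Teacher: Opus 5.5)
Your route is the paper's. You use $G_k\succeq\delta^2 I$ for the upper bound, then the descent lemma with $\eta\Gamma_k\preceq\frac1L I$ to get $f(x_{k+1})\le f(x_k)-\frac{\eta}{2}g_k^\top\Gamma_kg_k$, then telescoping, and finally $\|G_k\|\le\delta^2+S_k$ to turn the sum into a bound on $S_T$. (Your first factorization of $L\eta^2\Gamma_k^2$ is off by a factor $\eta$, but the functional-calculus version you then state is fine.)

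Your factor-of-$2$ bookkeeping is correct and the paper's is not. The paper telescopes the inequality above into $\sum_k\eta\,g_k^\top\Gamma_kg_k\le f(x_0)-f^\star$, silently dropping the $\frac12$, and that is the only way it reaches constant $1$. As written, therefore, both your proposal and the paper's proof establish only $\eta\Gamma_k\succeq\frac{\eta}{\delta+2(f(x_0)-f^\star)/\eta}I$. The stated constant is a genuine gap, though, as you say, the spiral argument only uses the bounds qualitatively.

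Your first suggested repair will not close it. The bound $g_k^\top\Gamma_k^2g_k\le g_k^\top\Gamma_kg_k/\sqrt{\delta^2+S_{k-1}}$ holds for AdaNorm but fails for AdaDiag and AdaFull. There $\lambda_{\max}(\Gamma_k)$ can stay near $1/\delta$ in coordinates or directions that have received little gradient mass, even when $S_{k-1}$ is large. Moreover, a loss factor that merely tends to $1$ would not yield an exact constant.

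What works is to keep the quadratic term exactly and reduce to a scalar by monotonicity. Since $L\eta\le\delta$, the descent lemma gives $f(x_k)-f(x_{k+1})\ge\eta\,g_k^\top h(\Gamma_k)g_k$ with $h(\lambda)=\lambda-\frac{\delta}{2}\lambda^2$, which is increasing on $(0,1/\delta]$. The spectrum of $\Gamma_k$ lies in $[1/u_k,1/\delta]$ with $u_k:=\sqrt{\delta^2+S_k}$, so $h(\Gamma_k)\succeq h(1/u_k)I$ for all three variants. Writing $u_{-1}=\delta$ and $\|g_k\|^2=u_k^2-u_{k-1}^2$, one gets
\[
\|g_k\|^2\,h(1/u_k)=(u_k-u_{k-1})\,\frac{(u_k+u_{k-1})(2u_k-\delta)}{2u_k^2}\;\ge\;u_k-u_{k-1}.
\]
The last inequality is equivalent to $2u_ku_{k-1}\ge\delta(u_k+u_{k-1})$, which holds because $u_k\ge u_{k-1}\ge\delta$. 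Telescoping gives $\eta\bigl(\sqrt{\delta^2+S_T}-\delta\bigr)\le f(x_0)-f^\star$, which is exactly the statement with constant $1$. The rest of your argument, including the special case $\eta=1$, $\delta\ge L$, goes through unchanged.
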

\begin{proof}
    Since the function $f$ is $L$-smooth, we have:
    \begin{equation*}
        f(y) \leq f(x) + \innp{\nabla f(x)}{y - x} + \frac{L}{2} \|y - x\|^2.
    \end{equation*}
    By the choice of $\eta$ and $\delta$, we know that $\eta \Gamma_k \preceq \frac{1}{L}I$, therefore:
    \begin{equation}
        \label{eq:descent-phenomena-large-delta}
        \begin{aligned}
            f(x_{k + 1}) &\leq f(x_k) - \nabla f(x_k)^\top \eta \Gamma_k\nabla f(x_k) + \frac{L}{2} \eta^2 \nabla f(x_k)^\top \Gamma_k^2 \nabla f(x_k)\\
            &\leq f(x_k) - \nabla f(x_k)^\top \left(\eta \Gamma_k - \frac{L}{2} \eta^2 \Gamma_k^2  \right)\nabla f(x_k) \\
            &\leq f(x_k) - \frac{1}{2} \eta \nabla f(x_k)^\top \Gamma_k \nabla f(x_k).
        \end{aligned}
    \end{equation}
    Telescoping gives us:
    \begin{equation*}
        \sum_{k = 0}^T \eta \nabla f(x_k)^\top \Gamma_k \nabla f(x_k) \leq f(x_0) - f^\star.
    \end{equation*}
    Note that we have the same estimation for the three AdaGrad variants:
    \begin{equation*}
        \sum_{k = 0}^T \eta \nabla f(x_k)^\top \Gamma_k \nabla f(x_k) \geq \eta\frac{S_T}{\sqrt{S_T + \delta^2}},
    \end{equation*}
    where $S_T = \sum_{k = 0}^T \|\nabla f(x_k)\|^2$. Therefore,
    \begin{equation}
        \label{eq:sum-square-grad-bounded}
        \eta\frac{S_T}{\sqrt{S_T + \delta^2}} \leq f(x_0) - f(x^\star) \implies \sqrt{S_T + \delta^2} \leq \frac{f(x_0) - f^\star}{\eta} + \delta,
    \end{equation}
    which implies that:
    \begin{equation*}
        S_T \leq \left(\frac{f(x_0) - f^\star}{\eta} + \delta\right)^2 - \delta^2
    \end{equation*}
    Finally, we have: 
    \begin{equation*}
        \eta \Gamma_k \succeq \frac{\eta}{\sqrt{\delta^2 + S_T}} I \succeq \frac{\eta}{\delta + (f(x_0) - f^\star) / \eta} I.
    \end{equation*}
    For our particular function, it is sufficient to fix $\eta = 1$.
\end{proof}

We now consider the update of $s_k$ (or $1 - r_k$) and $\theta_k$ with the three AdaGrad algorithms. 

\begin{lemma}[Dynamics of AdaGrad algorithms]
    \label{lemma:dynamics-adagrad-algos}
    Consider AdaGrad algorithms with $\eta = 1$. Let the polar coordinates of the $k$-th iteration $x_k$ be $(r_k, \theta_k)$ and define: 
    \begin{equation*}
        \begin{aligned}
            \hat{r}_k &= \begin{pmatrix}
                \cos\theta_k \\ \sin\theta_k
            \end{pmatrix}, \\
            \hat{\theta}_k &= \begin{pmatrix}
                -\sin\theta_k \\ \cos\theta_k
            \end{pmatrix},\\
            \beta_k &= \theta_k + \ln s_k.
        \end{aligned}
    \end{equation*}
    We define in addition:
    \begin{equation*}
        \begin{aligned}
            \rho_{k}^r &:= \innp{\hat{r}_k}{\Gamma_k\nabla f(x_k)},\\
            \rho_{k}^\theta &:= \innp{\hat{\theta}_k}{\Gamma_k\nabla f(x_k)}.\\
        \end{aligned}
    \end{equation*}
    If $\rho_k^r < r_k$, we obtain the following dynamics: 
    \begin{equation}
        \label{eq:dynamics-s-beta}
        \begin{aligned}
            s_{k + 1} &= 1 - \sqrt{(r_k - \rho_k^r)^2 + (\rho_k^\theta)^2},\\
            \theta_{k+1} &= \theta_k - \arctan \frac{\rho_k^\theta}{r_k - \rho^r_k},\\
            \beta_{k + 1} &= \beta_k - \arctan \frac{\rho_k^\theta}{r_k - \rho^r_k} + \ln \frac{s_{k+1}}{s_{k}},\\
            \|x_{k + 1} - x_k\| &= \sqrt{(\rho_k^r)^2 + (\rho_k^\theta)^2}.
        \end{aligned}
    \end{equation}
\end{lemma}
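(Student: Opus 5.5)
This lemma is pure planar geometry of one update. I will write $x_k = r_k \hat r_k$ and expand the displacement $\Gamma_k \nabla f(x_k)$ in the orthonormal basis $\{\hat r_k, \hat\theta_k\}$:
\[
\Gamma_k\nabla f(x_k) = \rho_k^r \hat r_k + \rho_k^\theta \hat\theta_k .
\]
With $\eta = 1$, the update \eqref{eq:abstract-adagrad} then reads
\[
x_{k+1} = (r_k - \rho_k^r)\hat r_k - \rho_k^\theta \hat\theta_k .
\]

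The last identity follows at once. Since $\|x_{k+1}-x_k\| = \|\Gamma_k\nabla f(x_k)\|$ and the basis is orthonormal, Pythagoras gives $\|x_{k+1}-x_k\| = \sqrt{(\rho_k^r)^2+(\rho_k^\theta)^2}$.

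The same orthonormality gives the radius:
\[
r_{k+1} = \|x_{k+1}\| = \sqrt{(r_k-\rho_k^r)^2+(\rho_k^\theta)^2}.
\]
The formula for $s_{k+1} = 1 - r_{k+1}$ follows.

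For the angle, I rotate coordinates by $-\theta_k$, so that $\hat r_k \mapsto (1,0)$ and $\hat\theta_k \mapsto (0,1)$. In these coordinates $x_{k+1}$ becomes the point $(r_k-\rho_k^r,\,-\rho_k^\theta)$. The hypothesis $\rho_k^r < r_k$ says this point lies in the open right half-plane. Hence its principal argument lies in $(-\pi/2,\pi/2)$ and equals $-\arctan\big(\rho_k^\theta/(r_k-\rho_k^r)\big)$, using that $\arctan$ is odd.

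This argument is exactly the directed angle shift $\Delta_k$ in the accumulated-angle convention. It lies in $(-\pi/2,\pi/2)\subset(-\pi,\pi]$, and its sign is positive for counterclockwise motion, as the convention requires. This yields $\theta_{k+1} = \theta_k - \arctan\frac{\rho_k^\theta}{r_k-\rho_k^r}$.

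Finally, by definition $\beta_{k+1} = \theta_{k+1} + \ln s_{k+1}$. Substituting the formula for $\theta_{k+1}$ and writing $\ln s_{k+1} = \ln s_k + \ln(s_{k+1}/s_k)$ gives the stated recursion for $\beta_k$. This last step implicitly needs $s_k, s_{k+1} > 0$, i.e.\ both iterates lie strictly inside the unit disk where $\ln s$ is defined. I will note that the formula is to be read under this standing condition, which the later spiral analysis maintains.

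The only delicate point is the angle identification. One must check that the half-plane condition $\rho_k^r < r_k$ makes $\arctan$ the correct branch, rather than a value shifted by $\pi$, and that it matches the $(-\pi,\pi]$ convention used to define the continuous angle $\theta_k$. Everything else is routine.
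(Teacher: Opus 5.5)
Your proposal is correct and follows the paper's proof essentially verbatim. Both expand $\Gamma_k\nabla f(x_k)$ in the orthonormal frame $\{\hat r_k,\hat\theta_k\}$, write $x_{k+1}=(r_k-\rho_k^r)\hat r_k-\rho_k^\theta\hat\theta_k$, and read off the radius, step length, angle update and $\beta$-recursion; you additionally check that $\rho_k^r<r_k$ puts $x_{k+1}$ in the right half-plane (so $\arctan$ gives the correct branch, consistent with the $(-\pi,\pi]$ convention for $\Delta_k$) and note that the $\beta$-recursion needs $s_k,s_{k+1}>0$, details the paper leaves implicit.
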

\begin{proof}
    The result is obtained by simply applying a change of coordinates: if one switches the coordinate system to use $(\hat{r}_k, \hat{\theta}_k)$ as the basis, then the current coordinates of $x_k$ are $(r_k, 0)$. Therefore,
    \begin{equation*}
        x_{k+1} = (r_k - \rho^r_k) \hat{r}_k - \rho_k^\theta \hat{\theta}_k.
    \end{equation*}
    The new $s_{k+1}$ is given by:
    \begin{equation*}
        s_{k + 1} = 1 - r_{k + 1}, \quad r_{k + 1} = \sqrt{(r_k - \rho_k^r)^2 + (\rho_k^\theta)^2}.
    \end{equation*}
    Moreover, if $\rho_k^r < r_k$, the new $\theta_{k+1}$ and $\beta_{k+1}$ can be calculated as:
    \begin{equation*}
        \theta_{k + 1} = \theta_k - \arctan \frac{\rho_k^\theta}{r_k - \rho^r_k},
    \end{equation*}
    and
    \begin{equation*}
        \begin{aligned}
            \beta_{k + 1} &= \theta_{k + 1} + \ln s_{k + 1} \\
            &= \beta_k + (\theta_{k + 1} - \theta_k) + \ln \frac{s_{k + 1}}{s_k}\\
            &= \beta_k - \arctan \frac{\rho_k^\theta}{r_k - \rho^r_k} + \ln \frac{s_{k + 1}}{s_k}.
        \end{aligned}
    \end{equation*}
    Finally, the distance $\|x_{k+1} - x_k\| = \|\Gamma_k \nabla f(x_k)\| = \sqrt{(\rho_k^r)^2 + (\rho_k^\theta)^2}$, since $(\rho_k^r, \rho_k^\theta)$ are the coordinates of $\rho_k$ in the new basis $(\hat{r}_k, \hat{\theta}_k)$.
\end{proof}

Next, we prove the first two claims in \Cref{theorem:sprial-behavior}
\begin{lemma}[Trapped in the unit ball and approaching the unit circle]
    \label{lemma:trapping-approaching}
    Consider a sequence $\{x_k\}_{k \in \NN}$ generated by AdaGrad algorithms with the hyperparameters $\eta = 1, \delta > 0$ and an initialization $x_0$ such that:
    \begin{equation*}
        \theta_0 = - \ln s_0 = - \ln (1 - r_0), 
    \end{equation*}
    where $(r_0, \theta_0)$ is the polar coordinates of $x_0$.
    Then, for a sufficiently small $s_0$ and a sufficiently large $\delta$, we have:
    \begin{equation*}
        \begin{aligned}
             \forall k \in \NN, \quad s_k &> 0,\\
            \lim_{k \to \infty} s_k &= 0.\\
        \end{aligned}
    \end{equation*}
\end{lemma}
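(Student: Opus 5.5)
We prove the two claims separately. The first, $s_k>0$, is a trapping statement. The second, $s_k\to 0$, will come from function-value decrease combined with the summability of gradients given by \Cref{lemma:step-size-choice}. Throughout, take $\eta=1$ and $\delta\ge L$. Then \Cref{lemma:step-size-choice} gives $\frac{1}{\delta+f(x_0)-f^\star}I\preceq\Gamma_k\preceq\frac1\delta I$. It also gives the descent inequality \eqref{eq:descent-phenomena-large-delta}, so $f(x_{k+1})\le f(x_k)$ for all $k$. With the initialization $\theta_0=-\ln s_0$, i.e. $\beta_0=0$, we get $f(x_0)=s_0^{a+1}$. This is tiny when $s_0$ is small, so every iterate stays in the sublevel set $\{f\le s_0^{a+1}\}$.

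For $s_k\to0$, the key observation is the following. In the region $0<s\le 1/2$ we have $f(s,\theta)\ge s^{a+1}$, so $s_k\le f(x_k)^{1/(a+1)}\le s_0$. The iterates therefore never leave the annulus $\{1-s_0\le r\}$, and in particular they stay away from $B(0,1/2)$. By \eqref{eq:sum-square-grad-bounded}, $S_\infty<\infty$, hence $\|\nabla f(x_k)\|\to0$. I would then argue that $f(x_k)\to 0$. On the sublevel set we have $s\le s_0$, and near the unit circle we need a Łojasiewicz-type comparison between $\|\nabla f\|$ and $f$ wherever $s>0$. From \eqref{eq:gradient-f-polar}, $|\nabla_r f|\ge (a+1)s^a+\ldots$. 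The cross terms $b s^{b-1}(1-\cos\beta)+s^{b-1}\sin\beta$ may cancel, so I would combine the radial and angular components. If $s_k\not\to0$, a subsequence has $s_k\to\bar s>0$ and the iterates accumulate at a point with $\bar s>0$. That point is then critical, since $\nabla f(x_k)\to0$ and $\nabla f$ is continuous. So it suffices to show that $f$ has no critical points with $0<s\le s_0$ for $s_0$ small.

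At a critical point, $\nabla_\theta f=0$ forces $\sin\beta=0$. Then $\nabla_r f=-(a+1)s^a-bs^{b-1}(1-\cos\beta)<0$, a contradiction. So there are no critical points in the open annulus, and hence $s_k\to0$.

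For $s_k>0$, we need $r_{k+1}<1$. From \Cref{lemma:dynamics-adagrad-algos}, $r_{k+1}^2=(r_k-\rho_k^r)^2+(\rho_k^\theta)^2$. Write $\Gamma_k\nabla f=\rho_k^r\hat r_k+\rho_k^\theta\hat\theta_k$ and note $\|\rho_k\|\le\|\nabla f(x_k)\|/\delta=O(s_k^{b-1}/\delta)$. The radial gradient is negative, pointing outward in $r$, so $\rho_k^r<0$ is expected. For the diagonal and full preconditioners this is not automatic, and this is the main obstacle: $\Gamma_k$ is not a multiple of the identity, so $\rho_k^r$ picks up angular contributions. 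To keep $r_{k+1}<1$, the plan is to compare the step length with the distance to the circle. If $r_{k+1}\ge1$, the point $x_{k+1}$ lies in the zero set, and $f(x_{k+1})=0$ is actually allowed by descent. So I would instead prove directly that $\|x_{k+1}-x_k\|<s_k$ once $\delta$ is large. This would follow from $\|\nabla f(x_k)\|\le C s_k^{b-1}\le C s_k^{2}$ (using $b\ge3$ and $s_k\le s_0$), together with $\|x_{k+1}-x_k\|\le Cs_k^2/\delta<s_k$ whenever $\delta>Cs_0$. Since $r_{k+1}\le r_k+\|x_{k+1}-x_k\|<1$, induction gives $s_k>0$ for all $k$, which closes the argument. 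The constant $C$ comes from \eqref{eq:gradient-f-polar} on $r\ge1/2$.
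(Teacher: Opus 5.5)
Your proposal is correct and follows the paper's proof essentially step for step. Descent together with $\min_{\|x\|=r}f=(1-r)^{a+1}$ gives $s_k\le s_0$. The step bound $\|x_{k+1}-x_k\|\le\|\nabla f(x_k)\|/\delta=O(s_k^2/\delta)<s_k$ keeps $s_{k+1}>0$. Summability of $\|\nabla f(x_k)\|^2$, plus the absence of critical points in the open annulus ($\sin\beta=0$ forces $\nabla_r f<0$), then yields $s_k\to0$.

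One ordering point needs tightening. The bound $f\ge s^{a+1}$ is only known on $\{1/2\le r<1\}$, and the extension on $B(0,1/2)$ is arbitrary, so the sublevel-set argument alone does not keep the iterates out of $B(0,1/2)$. Run a single induction on $0<s_k\le s_0$ (with $s_0\le 1/4$) in which your bound $\|x_{k+1}-x_k\|<s_k$ first places $x_{k+1}$ in $\{1/2\le r<1\}$, and only then invoke descent. The paper handles this with $r_0\ge 3/4$ and steps of length at most $1/4$.
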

\begin{proof}
    We first prove that $0 < s_k \leq s_0, \forall k \in \NN$ with a well-chosen $s_0$ and $\delta$.
    Note that for a fixed radius $r$, the function $f$ is minimized at:
    \begin{equation*}
        \theta = - \ln (1 - r) = - \ln s.
    \end{equation*}
    and the minimum value is:
    \begin{equation*}
        \min_{x, \|x\| = r} f(x) = (1 - r)^{a + 1},
    \end{equation*}
    which is monotonically decreasing w.r.t.~$r$. 
    We choose $x_0$ such that $r_0 \geq 3/4$ and $\delta$ is sufficiently large in the sense that:
    \begin{equation*}
        \begin{aligned}
            \delta & \geq L, \\
            \|\nabla f(x)\| & \leq \frac{\delta}{4}, \forall \|x\| \geq \frac{3}{4}.
        \end{aligned}
    \end{equation*}
    where $L$ is the smoothness constant of $f$. 

    We first prove $s_k \leq s_0, \forall k \in \NN$. Due to \eqref{eq:descent-phenomena-large-delta}, there is a descent phenomenon, i.e.,
    \begin{equation*}
        f(x_{k+1}) \leq f(x_k) \leq \ldots \leq f(x_0) 
    \end{equation*}
    Since $f(x_0) = \min_{\|x\| = r_0} f(x)$, and this function is monotonically decreasing, $r_k \notin [0.5,r_0), \forall k \in \NN$. Moreover, $r_k$ cannot be smaller than $0.5$ either since if $k$ is the first index satisfying $r_k < 0.5$, we have:
    \begin{equation*}
        \left|\frac{1}{2} - r_0\right| < \|x_{k+1}\| - \|x_k\| \leq \|x_{k + 1} - x_k\| = \|\Gamma_k\nabla f(x_k)\| \leq \frac{1}{\delta} \|\nabla f(x_k)\| \leq 1/4,
    \end{equation*}
    which is a contradiction since $r_0 \geq 3/4$. Hence, $s_k \leq s_0, \forall k \in \NN$.
    The gradient of $f$ decomposes as 
    \begin{equation*}
        \nabla f(x) = (\nabla_r f) \hat{r} + \frac{1}{r} (\nabla_\theta f) \hat{\theta},
    \end{equation*}
    where $\nabla_r f, \nabla_\theta f$ are given as in \eqref{eq:gradient-f-polar} and $\hat{r}, \hat{\theta}$ are given as in \Cref{lemma:property-of-f}.
    We established that $\|\nabla f(x_k)\| = O(s_k^2)$ (since $b \geq 3, a > b - 1$),
    so we also have $\|\Gamma_k \nabla f(x_k)\| = O(s_k^2)$. Therefore:
    \begin{equation*}
        |s_{k + 1} - s_k| = |r_{k + 1} - r_k| \overset{(\star)}{\leq} \|x_{k+1} - x_k\| = \|\Gamma_k\nabla f(x_k)\| = O(s_k^{2}),
    \end{equation*}
    where $(\star)$ is due to the triangle inequality. Thus, for sufficiently small $s_0$, we have: $\|\Gamma_k\nabla f(x_k)\| \leq \frac{1}{2}s_k$, hence:
    \begin{equation}
        s_{k + 1} \geq s_k - \|\Gamma_k\nabla f(x_k)\| \geq \frac{1}{2} s_k > 0.
    \end{equation}
    Thus, we can conclude that $0 < s_k \leq s_0, \forall k \in \NN$.
    
    Thanks to the previous claim, we deduce that the sequence $\{x_k\}_{k \in \NN}$ is bounded and so is its set of accumulation points. Due to \eqref{eq:descent-phenomena-large-delta}, we know that $\{\|\nabla f(x_k)\|^2\}_{k \in \NN}$ is summable.
    It implies that for any accumulation point $\bar{x}$ of $\{x_k\}_{k \in \NN}$ and its polar coordinates $(\bar r, \bar \theta)$:
    \begin{equation*}
        \nabla f(\bar{x}) = 0 \implies \begin{cases}
            \nabla_r f(\bar{x}) &= \,0,\\
            \nabla_\theta f(\bar{x}) &= \,0
        \end{cases}.
    \end{equation*}
    Note that if $\|\bar{x}\| < 1$, $\nabla_\theta f(\bar{x}) = 0$ only if $\sin(\bar{\theta} + \ln(1 - \bar{r})) = 0$. However, this would imply that:
    \begin{equation*}
        (a+1)(1 - \bar{r})^{a} + b(1-\bar{r})^{b-1}(1 - \cos(\bar{\theta} + \ln(1 - \bar{r}))) = 0,
    \end{equation*}
which is impossible. Hence, any accumulation point $\bar{x}$ has to lie on the unit circle, which implies that $\lim_{k \to \infty} s_k = 0$. That concludes the proof. 
\end{proof}

We finish the proof of \Cref{theorem:sprial-behavior} by the following lemma, which explains the dynamics of the angle $\theta_k$, and also $\beta_k = \theta_k + \ln(1 - r_k)$.
\begin{lemma}[Dynamics of the angle]
    \label{lemma:dynamics-angle}
    Under the same assumption as in \Cref{lemma:trapping-approaching}, for sufficiently large $\delta$, there exists a constant $C$ such that for sufficiently small $s_0$, :
    \begin{equation*}
        |\beta_k| \leq C s_k^{a - b + 1}, \forall k \in \NN.
    \end{equation*}
    In particular, $\lim_{k \to \infty} \beta_k = 0$. Moreover, $\lim_{k \to \infty} |\theta_k - \theta_{k+1}| = 0$.
\end{lemma}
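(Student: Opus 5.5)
The plan is to linearize the recursion for $\beta_k$ from \Cref{lemma:dynamics-adagrad-algos} and prove $|\beta_k| \leq C s_k^{a-b+1}$ by induction on $k$. The base case is immediate, since $\beta_0 = \theta_0 + \ln s_0 = 0$.

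\textbf{Step 1 (expansion of the preconditioned gradient).} Write $u = \beta_k$ and $s = s_k$, and use \eqref{eq:gradient-f-polar}. The radial component is
\[
\nabla_r f = -(a+1)s^{a} - b s^{b-1}(1-\cos u) - s^{b-1}\sin u ,
\]
and the angular component is $\tfrac{1}{r}\nabla_\theta f = \tfrac{1}{r}s^b\sin u$. By \Cref{lemma:step-size-choice}, with $\eta = 1$ and $\delta$ large, we have $\gamma_- I \preceq \Gamma_k \preceq \gamma_+ I$ with $\gamma_\pm \asymp 1/\delta$. This gives two consequences.
\begin{itemize}
\item $\rho_k^r = \innp{\hat r_k}{\Gamma_k\nabla f(x_k)}$. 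Its dominant part is $\hat r_k^\top\Gamma_k\hat r_k\,\nabla_r f \leq -\gamma_-\bigl((a+1)s^a + s^{b-1}\sin u\bigr)$, plus a cross term of order $\gamma_+ s^b|\sin u|$.
\item $|\rho_k^\theta| \lesssim \gamma_+\bigl(s^b|\sin u| + s^{b-1}|\sin u| + s^a\bigr)$.
\end{itemize}
For AdaNorm $\Gamma_k$ is scalar and the cross terms vanish. For AdaDiag and AdaFull I bound the off-diagonal part $\hat\theta^\top\Gamma_k\hat r$ only by $\gamma_+$; I expect this is enough because every term it multiplies is still of order $s^{b-1}|\sin u| + s^a$.

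\textbf{Step 2 (one-step recursion for $\beta$).} From \eqref{eq:dynamics-s-beta}, $\ln(s_{k+1}/s_k) \approx (s_{k+1}-s_k)/s_k$ and $s_{k+1}-s_k \approx \rho_k^r$, with errors of order $(\rho^\theta)^2 + (\rho^r)^2/s$, which are $O(s^{2b-3})$. The angle increment $\arctan\bigl(\rho^\theta/(r-\rho^r)\bigr)$ is $O(|\rho^\theta|)$. This yields
\[
\beta_{k+1} = \beta_k - c_k\,s_k^{b-2}\sin\beta_k + O\bigl(s_k^{a-1}\bigr) + O\bigl(s_k^{b-1}|\sin\beta_k|\bigr) + O\bigl(s_k^{2b-3}\bigr),
\]
where $c_k \in [\gamma_-,\gamma_+]$. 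The restoring term $-c_k s^{b-2}\sin\beta$ comes from $-\tfrac{1}{s}\,s^{b-1}\sin u$ inside $\rho^r/s$. Here I will need to check carefully whether the $O(s^{2b-3})$ remainder is actually $O(s^{2b-3}|\sin\beta| + s^{a-1})$ after expansion; I believe it is, since the only $\beta$-independent sources are the $s^a$ terms. Because $s^{b-2}\gamma_+ \ll 1$, the map $\beta \mapsto \beta - c s^{b-2}\sin\beta$ is a contraction toward $0$ on $|\beta| \leq 1$, giving
\[
|\beta_{k+1}| \leq \bigl(1 - \tfrac{\gamma_-}{2}s_k^{b-2}\bigr)|\beta_k| + K s_k^{a-1}.
\]

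\textbf{Step 3 (induction).} Assume $|\beta_k| \leq C s_k^{a-b+1}$. Then
\[
|\beta_{k+1}| \leq C s_k^{a-b+1} - \bigl(\tfrac{\gamma_-}{2}C - K\bigr)s_k^{a-1}.
\]
I choose $C$ with $\tfrac{\gamma_-}{2}C \geq 2K$, so that $|\beta_{k+1}| \leq C s_k^{a-b+1} - K s_k^{a-1}$. I then compare with $C s_{k+1}^{a-b+1}$ using $|s_{k+1}-s_k| = O(s_k^{b-1})$ from Step 1, which gives
\[
C s_{k+1}^{a-b+1} \geq C s_k^{a-b+1} - C' s_k^{a-1}.
\]
This is the main obstacle, because $C'$ scales with $C$. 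The way out is a sign argument. When $|\beta_k| \leq C s_k^{a-b+1}$ and $s_k$ is small, $s^{b-1}\sin\beta$ is of order $s^a$, so $\rho^r$ is dominated by $-(a+1)\gamma s^a$ plus $C s^a$-sized terms. So I take the expected decrease $s_{k+1} < s_k$ (equivalently $r_{k+1} > r_k$) as known from the descent property and \Cref{lemma:trapping-approaching}, and I quantify it as $s_k - s_{k+1} \leq \gamma_+(a+1+C)s_k^a + \dots$. Then the loss on the right is only $C(a-b+1)\gamma_+(a+1+C)s_k^{2a-b}$. This is $o(s_k^{a-1})$ provided $a-b+1 > 0$, which is exactly the hypothesis $a > b-1$, once $s_0$ is small depending on $C$. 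This is where the order of quantifiers enters: fix $\delta$, then $C$, then $s_0$.

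\textbf{Conclusion.} Since $s_k \to 0$ by \Cref{lemma:trapping-approaching}, we get $\beta_k \to 0$. Finally,
\[
|\theta_{k+1}-\theta_k| = \Bigl|\arctan\frac{\rho_k^\theta}{r_k-\rho_k^r}\Bigr| = O(|\rho_k^\theta|) = O(s_k^{b-1}) \to 0.
\]
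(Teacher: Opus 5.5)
Your proposal follows the paper's proof: you linearize the $\beta$-recursion of \Cref{lemma:dynamics-adagrad-algos} into a restoring term of size $\Theta(s_k^{b-2})|\beta_k|$ plus an $O(s_k^{a-1})$ forcing, then induct with $\delta$, $C$, $s_0$ fixed in that order. The paper packages the restoring term as $\Lambda_k = s_k^b\langle \zeta_k, \Gamma_k\zeta_k\rangle$, whose positive definiteness absorbs your AdaDiag/AdaFull cross terms at once. As in the paper, you use the induction hypothesis to sharpen $|s_{k+1}-s_k|$ to $O_C(s_k^{a})$ and the linearization remainder to $O_C(s_k^{2a-2})$; note that this remainder is $O(\|\nabla f(x_k)\|^2/s_k^2)$, not $O(s_k^{2b-3})$.

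Drop the assertion that $s_{k+1}<s_k$ follows from descent and \Cref{lemma:trapping-approaching}. It does not, since $\nabla_r f$ can be positive when $\sin\beta_k<0$. You never need it, though. If $s_{k+1}\ge s_k$, the comparison with $C s_{k+1}^{a-b+1}$ is trivial. Otherwise, the magnitude bound $|s_{k+1}-s_k|\le\|\Gamma_k\nabla f(x_k)\| = O_C(s_k^{a})$ gives a loss $O_C(s_k^{2a-b}) = o(s_k^{a-1})$, exactly as in the paper.
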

\begin{proof}
    To understand the dynamics of $\beta$ using $s$, we consider the gradient of $f$ reformulated using $\beta$ and $s$ (instead of $\theta$ and $r$).
    \begin{equation*}
        \begin{aligned}
            &\nabla f(x) = \nabla_r f(x) \hat{r} + \frac{1}{r} \nabla_\theta f(x) \hat{\theta}\\
            =& [-(a+1)(1 - r)^{a} - b(1-r)^{b-1}(1 - \cos(\theta + \ln(1 - r))) - (1 - r)^{b - 1}\sin(\theta + \ln(1 - r))] \hat{r} \\
            +& \frac{1}{r}(1 - r)^b \sin(\theta + \ln(1-r)) \hat{\theta}\\
            =& \left[-(a+1)s^{a} - bs^{b-1}(1 - \cos(\theta + \ln s)) - s^{b - 1}\sin(\theta + \ln s)\right] \hat{r} + \frac{1}{r}\left[s_k^b \sin(\theta + \ln s)\right] \hat{\theta}\\
            =& \left[-(a+1)s^{a} - bs^{b-1}(1 - \cos\beta) - s^{b - 1}\sin(\beta)\right] \hat{r} + \frac{1}{r}\left[s^b \sin(\beta)\right] \hat{\theta}\\
            =&\underbrace{\left[-(a+1)s^{a} - bs^{b-1}(1 - \cos\beta)\right]}_{P} \hat{r} - \left[s^b \sin(\beta)\right]\underbrace{\left(\frac{1}{s}\hat{r} - \frac{1}{r}\hat{\theta}\right)}_{=:\zeta}.
        \end{aligned}
    \end{equation*}
    In the following, we will use $P_k$ and $\zeta_k$ to indicate those quantities at the $k$th iteration. 
    
    Next, we will linearize the dynamics of $\beta_k$. By \Cref{lemma:dynamics-adagrad-algos}, we have:
    \begin{equation*}
        \beta_{k + 1} = \beta_k - \arctan \frac{\rho_k^\theta}{r_k - \rho^r_k} + \ln \frac{s_{k+1}}{s_{k}},
    \end{equation*}
    where $\rho_k^r$ and $\rho_k^\theta$ are defined as in \Cref{lemma:dynamics-adagrad-algos}. We consider each term separately: If $s_0$ is sufficiently small, we have:
    \begin{equation*}
        \begin{aligned}
            \left|\arctan \frac{\rho_k^\theta}{r_k - \rho^r_k} - \frac{\rho_k^\theta}{r_k}\right| & \leq \left|\arctan \frac{\rho_k^\theta}{r_k - \rho^r_k} - \frac{\rho_k^\theta}{r_k - \rho^r_k}\right| + \left|\frac{\rho_k^\theta}{r_k - \rho^r_k} - \frac{\rho_k^\theta}{r_k}\right|\\
            &\leq \underbrace{\frac{1}{3}\left|\frac{\rho_k^\theta}{r_k - \rho^r_k}\right|^3}_{O(\|\nabla f(x_k)\|^3)} + \underbrace{\left|\frac{\rho_k^\theta\rho_k^r}{r_k(r_k - \rho^r_k)}\right|}_{O(\|\nabla f(x_k)\|^2)}.
        \end{aligned}
    \end{equation*}
    where we use the fact that $|\arctan x - x| \leq \frac{1}{3}|x|^3, \forall |x| < 1$ (using the Taylor expansion of the function $\arctan$ around $0$) and $|\rho_k^r|, |\rho_k^\theta| \leq \|\Gamma_k \nabla f(x_k)\| = O(\|\nabla f(x_k)\|)$.

    We deal with the second term (while always assuming that $s_0$ is sufficiently small) as:
    \begin{equation*}
        \begin{aligned}
            \left|\ln \frac{s_{k + 1}}{s_k} - \frac{\rho_k^r}{s_k} \right|&\leq \left|\ln \frac{s_{k + 1}}{s_k} - \frac{s_{k + 1} - s_k}{s_k} \right| + \left|\frac{s_{k + 1} - s_k - \rho^r_k}{s_k}\right|\\
            &= \left|\ln \frac{s_{k + 1}}{s_k} - \frac{s_{k + 1} - s_k}{s_k} \right| + \left|\frac{r_{k + 1} - r_k + \rho^r_k}{s_k}\right|\\
            &\leq \left(\frac{s_{k + 1} - s_k}{s_k}\right)^2 + \left|\frac{\sqrt{(r_k - \rho_k^r)^2 + (\rho_k^\theta)^2} - (r_k - \rho^r_k)}{s_k}\right|\\
            &\leq \underbrace{\left(\frac{s_{k + 1} - s_k}{s_k}\right)^2}_{=O\left(\frac{\|\nabla f(x)\|^2}{s_k^2}\right)} + \underbrace{\left|\frac{(\rho_k^\theta)^2}{s_k(r_k - \rho^r_k)\sqrt{(r_k - \rho_k^r)^2 + (\rho_k^\theta)^2}}\right|}_{=O\left(\frac{\|\nabla f(x)\|^2}{s_k}\right)},\\
        \end{aligned}
    \end{equation*}
    where we use again the fact that $|\ln(1 + x) - x| \leq x^2$ for $|x| < 1/2$ (using the Taylor expansion again). We conclude that:
    \begin{equation*}
        \beta_{k + 1} - \beta_k = -\frac{\rho_k^\theta}{r_k} + \frac{\rho_k^r}{s_k} + O\left(\frac{\|\nabla f(x_k)\|^2}{s_k^2}\right).
    \end{equation*}
    Due to the definitions of $\rho_k^\theta$ and $\rho_k^r$, we have:
    \begin{equation*}
        -\frac{\rho_k^\theta}{r_k} + \frac{\rho_k^r}{s_k} = -\frac{\innp{\hat{\theta}}{\Gamma_k\nabla f(x_k)}}{r_k} + \frac{\innp{\hat{r}}{\Gamma_k\nabla f(x_k)}}{s_k} = \innp{\zeta_k}{\Gamma_k\nabla f(x_k)}.
    \end{equation*}
    Thus, we end up with the following estimation:
    \begin{equation*}
        \begin{aligned}
            \beta_{k + 1} &= \beta_k + P_k\innp{\zeta_k}{\Gamma_k\hat{r}_k} - \sin \beta_k \underbrace{s^b\innp{\zeta_k}{\Gamma_k\zeta_k}}_{\Lambda_k} + O\left(\frac{\|\nabla f(x_k)\|^2}{s_k^2}\right)\\
            &=\beta_k (1 - \Lambda_k) + P_k\innp{\zeta_k}{\Gamma_k\hat{r}_k} - (\sin \beta_k - \beta_k)\Lambda_k + O\left(\frac{\|\nabla f(x_k)\|^2}{s_k^2}\right).
        \end{aligned}
    \end{equation*}
    We estimate the terms in the above equation: since $\lambda_- I \preceq \Gamma_k \preceq \lambda_+ I$ (with $\lambda_\pm$ determined uniquely by $\delta$ and $f(x_0) - f^\star$, see \Cref{lemma:step-size-choice}), $\zeta_k = \Theta(1/s_k)$, we have:
    \begin{equation*}
        \Lambda_k = s_k^b \innp{\zeta_k}{\Gamma_k\zeta_k} = \Theta(s_k^{b - 2}).
    \end{equation*}
    Moreover,
    \begin{equation*}
        \begin{aligned}
            |P_k\innp{\zeta_k}{\Gamma_k\hat{r}_k}| &= (a+1)|s_k^{a} \innp{\zeta_k}{\Gamma_k\hat{r}_k}| + b|s_k^{b-1}(1 - \cos\beta_k) \innp{\zeta_k}{\Gamma_k\hat{r}}| \\
            &\leq \underbrace{(a+1)|s_k^{a} \innp{\zeta_k}{\Gamma_k\hat{r}_k}|}_{= O(s_k^{a - 1})} + \underbrace{\frac{b}{2}|s_k^{b-1}\beta_k^2 \innp{\zeta_k}{\Gamma_k\hat{r}_k}|}_{=O(s_k^{b - 1}\beta_k^2)}
        \end{aligned} 
    \end{equation*}
    \begin{equation*}
        \begin{aligned}
            |(\sin(\beta_k) - \beta_k)\Lambda_k| \leq \frac{1}{6}|\beta_k^3\Lambda_k| = O(\beta_k^3s_k^{b - 2}).
        \end{aligned}
    \end{equation*}
    We also have to estimate the difference between $s_{k + 1}$ and $s_k$, which is:
    \begin{equation*}
        \begin{aligned}
            \|\underbrace{s_{k + 1} - s_k}_{\Delta_k}\| &\leq \|\Gamma_k\nabla f(x_k)\| = O(\|P_k \hat{r}_k - s_k^b \sin (\beta_k) \zeta_k\|) = O(s_k^a + s_k^{b - 2}\beta_k^2 + s_k^{b - 1}\beta_k).
        \end{aligned}
    \end{equation*}
    This estimation allows us to have:
    \begin{equation*}
        \begin{aligned}
            s_{k+1}^{a - b + 1} - s_k^{a - b + 1} &= O(s_k^{a - b}\|\Delta_k\|).
        \end{aligned}
    \end{equation*}
    Thus, we come to the following estimation: We prove by induction that $|\beta_k| \leq Cs_k^{a - b + 1}$ for a big enough constant $C > 0$ (that we will explain how to choose later). For $k = 0$, the claim holds trivially since $\beta_0 = 0$. Assume that the claim holds up to $k \in \NN$, we have the following order for each quantity that we just estimated under the assumption that $a > b - 1$:
    \begin{equation*}
        \begin{aligned}
            \Lambda_k &= \Theta(s_k^{b - 2}),\\
            |P_k\innp{\zeta_k}{\Gamma_k\hat{r}_k}| &= O(s_k^{a - 1} + s_k^{b-1 + 2(a-b+1)}) = O(s_k^{a - 1}),\\
            |(\sin(\beta_k) - \beta_k)\Gamma_k| &= O(\beta_k^3s_k^{b - 2}) = O(s_k^{3(a - b + 1) + b - 2}) = O(s_k^{3a -2b + 1}),\\
            \|s_{k+1}-s_k\| &= O(s_k^{a} + s_k^{b - 1 + 2(a - b + 1)} + s_k^{b - 1 + a - b + 1}) = O(s_k^{a}),\\
            s_{k + 1}^{a - b + 1} - s_k^{a - b + 1} &= O(s_k^{2a - b}).\\
            \frac{\|\nabla f(x_k)\|^2}{s_k^2} &= O(s_k^{2a - 2}).
        \end{aligned}
    \end{equation*}
    We choose $C > 0$ such that $C\Lambda_ks_k^{a - b + 1} - |P_k \innp{\zeta_k}{\Gamma_k\hat{r}_k}| > C's_k^{a - 1}$, for some $C' > 0$, which is always possible because $|P_k\innp{\zeta_k}{\Gamma_k\hat{r}_k}| = O(s_{k}^{a - 1})$ and $C\Gamma_ks_k^{a - b + 1} = \Theta(s_{k}^{a - 1})$. Assuming that $s_0$ is small enough so that $\Gamma_k < 1, \forall k \in \NN$, we have:

    \begin{equation*}
        \begin{aligned}
            |\beta_{k+1}| &\leq (1 - \Lambda_k)|\beta_k| + |P_k\innp{\zeta_k}{\Gamma_k\hat{r}_k}| + |(\sin \beta_k - \beta_k)\Lambda_k| + O\left(\frac{\|\nabla f(x_k)\|^2}{s_k^2}\right)\\
            &\leq (1 - \Lambda_k) Cs_k^{a - b + 1} + |P_k\innp{\zeta_k}{\Gamma_k\hat{r}_k}| + O(s_k^{3a - 2b + 1} + s_k^{2a - 2})\\
            &\leq Cs_k^{a - b + 1} - C's_k^{a - 1} + O(s_k^{3a - 2b + 1} + s_k^{2a - 2})\\
            &\leq Cs_{k+1}^{a - b + 1} - C's_k^{a - 1} + O(s_k^{3a - 2b + 1} + s_k^{2a - 2} + s_k^{2a - b}).
        \end{aligned}
    \end{equation*}
    Note that by our choice of $a > b - 1$, we have $a - 1 < \min(3a - 2b + 1, 2a - 2, 2a - b)$. Therefore, with sufficiently small $s_0$, the induction part holds.

    Finally, one has:
    \begin{equation*}
        \lim_{k \to \infty} |\theta_k - \theta_{k + 1}| = \lim_{k \to \infty}\left|\arctan \frac{\rho_k^\theta}{r_k - \rho^r_k}\right| \leq \lim_{k \to \infty} \frac{\|\Gamma_k\nabla f(x_k)\|}{|r_k - \rho^r_k|} = 0.
    \end{equation*}
    That concludes the proof.
\end{proof}

We can conclude the proof of \Cref{theorem:sprial-behavior}, which directly follows from \Cref{lemma:trapping-approaching} and \Cref{lemma:dynamics-angle}.

\end{document}